\newcommand{\sP}{\mathsf{f}}
\newcommand{\Risk}{\mathsf{Risk}}
\newcommand{\beq}[1]{\begin{equation}\label{eq:#1}}
\newcommand{\eeq}{\end{equation}}
\renewcommand{\Re}{\operatorname{\mathsf{Re}}}
\renewcommand{\Im}{\operatorname{\mathsf{Im}}}
\newcommand{\half}{ \mbox{\small$\frac{1}{2}$}}
\newcommand{\be}{\begin{eqnarray}}
\newcommand{\ee}[1]{\label{#1}\end{eqnarray}}
\newcommand{\ese}{\end{eqnarray*}}
\newcommand{\bse}{\begin{eqnarray*}}
\def\qed{\hfill$\square$}
\def\cJ{{\cal J}}
\def\cJ{{\cal J}}
\def\Diag{{\hbox{\rm Diag}}}
\def\E{{\mathbf{E}}}
\def\N{{\mathds{N}}}
\def\cN{{\mathcal{N}}}
\def\sF{{\mathsf{F}}}
\def\C{\mathds{C}}
\def\R{\mathds{R}}
\def\Z{\mathds{Z}}
\def\sR{\mathsf{R}}
\def\sL{\mathsf{r}}
\newcommand{\Prob}{\mathrm{Prob}}
\newcommand{\wh}[1]{{\widehat{#1}}}
\newcommand{\wt}[1]{{\widetilde{#1}}}
\def\Argmin{\mathop{\hbox{\rm Argmin}}}
\def\op{\textsf{op}}
\def\F{{\mathcal{F}}}
\def\T{{\mathds{T}}}
\def\D{{\mathds{D}}}
\def\vphi{\varphi}
\newcommand{\tr}{\textup{tr}}
\newtheorem{theorem}{Theorem}[section]
\newtheorem{proposition}{Proposition}[section]
\newtheorem{lemma}{Lemma}[section]
\newtheorem{definition}{Definition}[section]
\newtheorem{corollary}{Corollary}[section]
\newtheorem{conjecture}{Conjecture}[section]
\newtheorem{remark}{Remark}[section]
\newcommand{\veps}{\varepsilon}
\newcommand{\ones}{\mathds{1}}
\newcommand{\lang}{\left\langle}
\newcommand{\rang}{\right\rangle}
\renewcommand{\le}{\leqslant}
\renewcommand{\ge}{\geqslant}
\newcommand{\negphantom}[1]{\ifmmode\settowidth{\dimen0}{$#1$}\else\settowidth{\dimen0}{#1}\fi\hspace*{-\dimen0}}
\newcommand{\nvsps}{\vspace{-0.1cm}}
\newcommand{\pinv}{\dagger}
\renewcommand{\top}{\mathsf{T}}
\newcommand{\htop}{\mathsf{H}}
\newcommand{\X}{X}
\newcommand{\XX}{\mathscr{X}}
\renewcommand{\S}{\mathsf{Supp}}
\newcommand{\Sc}{\mathsf{Supp}^{\mathsf{c}}}
\renewcommand{\E}{\mathsf{E}}
\newcommand{\Exp}{\mathds{E}}
\newcommand{\full}{\mathsf{full}}
\newcommand{\strue}{s_0}
\newcommand{\proofstep}[1]{$\boldsymbol{{#1}^o}$}
\newcommand{\bald}{\begin{aligned}}
\newcommand{\eald}{\end{aligned}}
\def\Dir{{\mathsf{Dir}}}
\def\Fej{{\mathsf{Fej}}}
\def\dir{{\mathsf{dir}}}
\def\fej{{\mathsf{fej}}}
\def\Filt{{\mathsf{Rest}}}
\def\Circ{{\mathsf{Circ}}}
\def\Hyp{\mathscr{H}}
\newcommand{\Posi}{\R_{+;1,\infty}}
\newcommand{\Real}{\R_{1,\infty}}
\newcommand{\Comp}{\C_{1,\infty}}
\newcommand{\Ext}{\mathsf{Ext}}
\newcommand{\poly}{\textup{poly}}
\DeclareRobustCommand{\lasymp}{\lg@asymp{<}}
\DeclareRobustCommand{\gasymp}{\lg@asymp{>}}
\newcommand{\under@asymp}[1]{\clipbox{0pt 0pt 0pt {0.5\height}}{$\m@th#1\asymp$}}
\newcommand{\lg@asymp}[1]{\mathrel{\mathpalette\lg@asymp@{#1}}}
\newcommand{\lg@asymp@}[2]{%
  \vcenter{%
    \offinterlineskip
    \m@th
    \ialign{%
      \hfil##\hfil\cr
      $#1#2$\cr
      \under@asymp{#1}\cr
    }%
  }%
}
\newcommand{\odima}[1]{{#1}}
\begin{document}

\title{Near-Optimal and Tractable Estimation under Shift-Invariance}
\author{
Dmitrii M.~Ostrovskii\thanks{Georgia Institute of Technology, School of Mathematics \& H. Milton Stewart School of Industrial and Systems Engineering (ISyE), Atlanta, USA. Email: \texttt{ostrov@gatech.edu}.}
}

\maketitle

\begin{abstract}

How hard is it to estimate a discrete-time signal~$(x_{1}, ..., x_{n}) \in \mathds{C}^n$ satisfying an {\em unknown} linear recurrence relation of order~$s$ and observed in i.i.d.~complex Gaussian noise?
The class of all such signals is parametric but extremely rich: it contains all exponential polynomials over~$\mathds{C}$ with total degree~$s$, including harmonic oscillations with~$s$ arbitrary frequencies.
Geometrically, this class corresponds to the projection onto~$\C^{n}$ of the union of all shift-invariant subspaces of~$\mathds{C}^\mathds{Z}$ of dimension~$s$.
We show that the statistical complexity of this class, as measured by the squared minimax radius of the~$(1-\delta)$-confidence~$\ell_2$-ball, is nearly the same as for the class of~$s$-sparse signals, namely
$O\left(s\log(en) + \log(\delta^{-1})\right) \cdot \log^2(es) \cdot \log(en/s).$
Moreover, the corresponding near-minimax estimator is tractable, and it can be used to build a test statistic with a near-minimax detection threshold in the associated detection problem.
\odima{These statistical results rely upon a simple analytic observation: the interpretation of the Fourier coefficients of the Christoffel function of any shift-invariant subspace of~$\mathds{C}^\mathds{Z}$ as a reproducing filter with the smallest possible spectrum, in all~$\ell_p$-norms, $p \in [1,\infty]$, at once.}

\end{abstract}

\section{Introduction}
\label{sec:intro}

This paper is devoted to answering two questions posed by A.~Nemirovski in the 1990s~(\cite{nemirovski1992soviet,nemirovski_topics}):
\begin{quote}
\label{que:risk}
{\em 
What is the statistical complexity of estimating a solution to arbitrary, \underline{unknown} homogeneous linear difference equation of given order? 
Is there a tractable estimator?}
\end{quote}
To state them rigorously we need some notation.
Let~$\C(\Z)$ be the space of two-sided complex sequences, i.e.~$x = (x_t: t \in \Z)$ with~$x_t \in \C$. 
Let~$\Delta: \C(\Z) \to \C(\Z)$ be the delay (a.k.a.~lag, unit shift) operator, acting as~$(\Delta x)_t = x_{t-1}$.
A homogeneous linear difference equation of order~$s$ is
\begin{equation}
\label{eq:intro-ODE}
\sP(\Delta) x \equiv 0
\end{equation}
where~$\sP(\cdot)$ is a polynomial of degree~$s$ with~$\sP(0) = 1$; the r.h.s.~is the zero sequence.
Assume that a solution~$x \in \C(\Z)$ to such an equation is observed for~$t \in \Z: |t| \le n$, in Gaussian noise:
\begin{equation}
\label{eq:intro-observations}
y_t = x_t + \sigma \xi_{t}, \quad t \in [n]_\pm := \{-n, ..., n\},
\end{equation}
where~$\sigma$ is the noise level and~$\xi_{-n}, ..., \xi_{n} \sim \C\cN(0,1)$ are i.i.d.~standard complex Gaussians.
(Changing the observation domain from~$[n]_+ := \{1, ..., n\}$ to~$[n]_{\pm}$ is merely a matter of notation, but this change proves to be convenient for our purposes.)
Our goal is to recover~$x$ on~$[n]_{\pm}$.
As such, while a candidate estimator~$\wh x = \wh x(y_{-n}, ..., y_{n})$ is, formally, an element of~$\C(\Z)$, 
its performance is to be measured in terms of the mean-squared error (MSE) on~$[n]_{\pm}$, with some marginalization over the noise distribution.
To be more precise, fixing a desired confidence level~$1-\delta \in (0,1)$, for any set~$X \subseteq \C(\Z)$ we define the {\em worst-case} (over~$X$) {\em $\delta$-risk} of estimator~$\wh x$:
\[
\Risk_{n,\delta}(\wh x|X) := 
\inf 
\Bigg\{ 
\veps > 0: \; \Prob \Bigg(  \frac{1}{2n+1} \sum_{t \in [n]_{\pm}} |\wh x_t - x_t|^2 > \veps \Bigg) \le \delta \;\;\; \forall x \in X
\Bigg\}.
\]
That is,~$\Risk_{n,\delta}(\wh x|X)$ is the largest, over~$x \in X$, tight upper~$(1-\delta)$-confidence bound for the MSE.
Define the {\em minimax}~$\delta$-risk over~$X$ by minimizing~$\Risk_{n,\delta}(\wh x|X)$ over all possible estimators:
\begin{equation}
\label{def:minimax-risk}
\Risk_{n,\delta}^\star(X) = \inf_{\wh x: \C^{2n+1} \to X} \Risk_{n,\delta}(\wh x|X).
\end{equation}
(Hereinafter, we always assume measurability when necessary.)
Finally, define~$\XX_s$ as the union of the solution sets of all equations of the form~\eqref{eq:intro-ODE} and order~$s$, with all possible polynomials~$\sP$. 
Nemirovski's first question, in its most challenging version, amounts to computing the minimax risk~$\Risk_{n,\delta}^\star(\XX_s)$ and constructing an estimator~$\wh x^\star$ attaining it, that is a~{\em minimax estimator}.
More realistically, one may settle on estimating~$\Risk_{n,\delta}^\star(\XX_s)$ up to a polylogarithmic in~$n,s$ factor, thus exhibiting a {\em near-minimax} estimator.
Nemirovski's second question is whether one can construct a near-minimax estimator which is tractable: ideally, expressed as an optimal solution to a well-structured convex optimization problem, such as~a linear or semidefinite program~\cite{boyd2004convex}.\\

Before we move further, let us recall the geometric interpretation of equations of the form~\eqref{eq:intro-ODE}.
It is straightforward to show that, for any such equation with~$\deg(\sP) = s$, the solution set is an~$s$-dimensional $\Delta$-invariant linear subspace of~$\C(\Z)$; for brevity we shall call such subspaces {\em shift-invariant} and use the acronym SIS.
Conversely, any~$s$-dimensional SIS is the solution set for a unique equation of the form~\eqref{eq:intro-ODE}; see e.g.~\cite[Prop.~4.1.2]{ostrovskii2018adaptive}.
As the result,~$\XX_s$ is precisely the union of all shift-invariant subspaces of~$\C(\Z)$ of dimension~$s$.
In particular, since~$\XX_s$ contains the subspace of all polynomials of degree at most~$s-1$, i.e.~solutions to~\eqref{eq:intro-ODE} with~$\sP(z) = (1-z)^s$, 
we get a trivial lower bound under the assumption---always made henceforward---that~$2n+1 \ge s$:~\footnote{We write~$f \lasymp g$,~$f \gasymp g$,~$f \asymp g$, respectively, when~${f}/{g} \le C$,\,~${f}/{g} \ge c$,\;~$c \le {f}/{g} \le C$ for some constants~$C,c > 0$.}
\begin{equation*}
\Risk_{n,\delta}^\star(\XX_s) \gasymp \frac{\sigma^2}{2n+1} (s + \log(\delta^{-1})).
\end{equation*}
Indeed,~$\Risk_{n,\delta}^\star(\cdot)$ is nondecreasing in the set, and the minimax~$\delta$-risk on a subspace is known~\cite{mourtada2022exact}.

\paragraph{Baseline: super-resolution methods.}
One way to highlight the challenges presented by Nemirovski's questions is via their connection with Super-Resolution~\cite{candes2014towards} or ``spectral compressive sensing''~\cite{duarte2013spectral}.
Define~$\XX(\Omega)$, for~$\Omega \subseteq \C^{s}$, as the union of solution sets for equations of the form~\eqref{eq:intro-ODE} and such that the~$s$-tuple of the roots of~$\sP$, with~$\deg(\sP) = s$, belongs to~$\Omega$; in particular,~$\XX_s = \XX(\C^s)$.
Let~$\T$ be the unit circle, and~$\T_n$ be the set of~$(2n+1)^{\textup{st}}$ roots of unity:
\begin{equation}
\T_n := \left\{z \in \T: z^{2n+1} = 1 \right\}.
\end{equation}
Choosing~$\arg(\cdot)$ to be~$(-\pi, \pi]$-valued, so that~$|\arg\left({z_1}/{z_2} \right)|$ is the arc-length metric over~$\T$, define
\begin{equation}
\label{eq:intro-separated-thorus}
\T_{s,n} := \left\{ (z_1, ..., z_s) \in \T^s: \; j \ne k \;\Rightarrow\; \left|\arg\left(\frac{z_j}{z_k} \right)\right| \ge \frac{2\pi}{2n+1} \right\}.
\end{equation}
Note that for the set~$\T^{(s)} := {\T \choose s}$ of~$s$-tuples without replacement,~$\XX(\T^{(s)})$ is the set of harmonic osclillations with~$s$ arbitrary frequencies, i.e.~sequences of the form~$x_t = \sum_{k = 1}^s a_k  e^{i \omega_k t}$ for~$\omega_k \in \R$ and~$a_k \in \C$.
Replacing~$\T^{(s)}$ with~$\T^s$, i.e.~allowing~$p(\cdot)$ to have repeated roots, we get ``generalized harmonic oscillations,'' where each of~$h \le s$ harmonics is polynomially modulated, with total degree~$s-h$ of the modulating polynomials.
Next,~$\XX(\T_n^{(s)})$, with~$\T_n^{(s)} := {\T_n \choose s}$, is the set of harmonic oscillations with frequencies restricted to the uniform grid~$\{\frac{2\pi k}{2n+1}: k \in [n]_{\pm}\}$ of the discrete Fourier transform (DFT).
Finally,~$\XX(\T_{s,n})$ is the set of harmonic oscillations with frequences pairwise separated by~$\smash{\frac{2\pi}{2n+1}}$, the Abbe limit~\cite{candes2014towards}.
Since~$\smash{\T_n^{(s)} \subseteq \T_{s,n} \subseteq \T^{(s)} \subseteq \T^s \subseteq \C^s}$, 
\[
\Risk_{n,\delta}^\star(\XX(\T_{n}^{(s)})) \le \Risk_{n,\delta}^\star(\XX(\T_{s,n})) \le \Risk_{n,\delta}^\star(\XX(\T^{(s)})) = \Risk_{n,\delta}^\star(\XX(\T^{s})) \le \Risk_{n,\delta}^\star(\XX_s)
\]
where the equality holds by a straightforward compactness argument. 
Now, observe that, by the unitarity of DFT and the unitary invariance of noise distribution and squared loss, the problem of denoising over~$\XX({\T_n^{(s)}})$ is equivalent\footnote{In the natural sense; formally, the Le Cam distance~\cite{Lecam1986} between the corresponding statistical models is zero.} to that of denoising~$s$-sparse vectors in the Gaussian sequence model -- for which the minimax~$\delta$-risk is known up to a constant factor (see e.g.~\cite{verzelen2012minimax}):
\begin{equation}
\label{eq:intro-lower}
\Risk_{n,\delta}^\star(\XX(\T_n^{(s)})) \asymp \displaystyle \frac{\sigma^2}{2n+1} (s \log(en/s) + \log(\delta^{-1})).
\end{equation}
This gives a lower bound for~$\Risk_{n,\delta}^\star(\XX_s)$ with the classical~$\log\big({n \choose s}\big) \asymp s \log(en/s)$                                                                                                                    complexity term.
On the other hand, exploiting some ideas and tools from sparse recovery, semidefinite programming and the theory of nonnegative polynomials, the works~\cite{recht2} and~\cite{candes2013super} showed that
\begin{equation}
\label{eq:recht-bound}
\Risk_{4n,\delta}^\star(\XX(\T_{s,n})) \lasymp \frac{\sigma^2}{2n+1} s\log(e n \delta^{-1}).
\end{equation}
Note that~\eqref{eq:recht-bound} would match the lower bound~\eqref{eq:intro-lower} if one ignored the~$s$-fold inflation of the confidence term, and oversampling by a constant factor, i.e.~pairwise frequency separation of~$4$ DFT bins.
Moreover, the corresponding estimators are tractable -- computed via semidefinite programming.
However, these methods do not allow to go beyond the case of harmonic oscillations with well-separated frequencies. 
This is because they are based on the compressive sensing paradigm~\cite{duarte2013spectral}: if we define the moment map~$\Phi: L_1(\T) \to \C(\Z)$ such that
\[
[\Phi(\nu)]_t = \int_{z \in \T} z^t d \nu(z),
\]
the signal is estimated by~$\wh x = \Phi(\hat \nu)$, where
\begin{equation}
\label{eq:intro-AST}
\wh \nu \in \Argmin_{\nu \in L_2(\T)} \lambda \|\nu\|_{L_1(\T)} + \sum_{t \in [n]_{\pm}} |y_t - [\Phi(\nu)]_t|^2.
\end{equation}
For such estimators, obtaining ``fast''---order~$1/n$---rates for the MSE relies on assuming near-orthogonality of the dictionary elements comprising the signal, such as the restricted isometry property (RIP) or other conditions~\cite{vandegeer2009}, and using the dual problem of~\eqref{eq:intro-AST} for {support recovery} (here, recovery of the SIS corresponding to~$p$). 
Support recovery must be {exact} in the noiseless regime, i.e. when~$\sigma = 0$; 
yet, extensive simulations in~\cite{candes2014towards} show that the estimator~\eqref{eq:intro-AST} does not allow to recover the individual frequencies of~$x \in \XX(\T_{s,n})$~from~$(x_{-m}, ..., x_{m})$ if~$m \le n$.
Nonsurprisingly in this connection, we are not aware of any guarantee on~$\Risk_{n,\delta}^\star(\XX(\T_{s,n}))$ delivered by the Lasso-type estimator~\eqref{eq:intro-AST}. 

Meanwhile, a very different approach, proposed in its earliest version in the pioneering works~\cite{jn1-2009,jn2-2010} by Juditsky and Nemirovski, allows to upper-bound~$\Risk_{n,\delta}^\star(\XX_s)$ by~$\frac{\sigma^2}{2n+1}$ times a factor that is polynomial in~$s$,~$\log n$ and~$\log(\delta^{-1})$. 
We shall now give an overview this approach.

\paragraph{Going beyond~$\T_{s,n}$: adaptive estimators based on reproducing filters.}
In~\cite{jn1-2009} and~\cite{jn2-2010}, Juditsky and Nemirovski showed that, for~$n$ such that the right-hand side is at most~$\sigma^2$,
\[
\Risk_{n,\delta}^\star(\XX(\T^s)) \lasymp \frac{\sigma^2}{2n+1} \poly(s,\log(en\delta^{-1})).
\]
In~\cite{harchaoui2015adaptive}, the polynomial factor was improved through a more careful analysis of the same estimator.
Finally, \cite{harchaoui2019adaptive} managed to replace~$\T^s$ with~$\C^s$, while simultaneously further improving the polynomial factor, yet falling short of making it linear in~$s$.
Namely, combining~{\cite[Thm.~1]{harchaoui2019adaptive}} with a straightforward multiscale procedure described in Section~\ref{sec:l2-full}, one arrives at the bound
\begin{equation}
\label{eq:state-of-art-full}
\Risk_{n,\delta}^\star(\XX_s) \lasymp \frac{\sigma^2}{2n+1} \left( s^2 \log(e\delta^{-1}) + s\log n \right) \log(n/s). 
\end{equation}
As it turns out, improving this result any further, and especially breaking its quadratic dependence on~$s$, is a nontrivial challenge that has no less to do with approximation theory than with statistics.
To shed light on the nature of this challenge, we are now about to introduce the notion of {\em reproducing filters}.
First, let us define the discrete convolution operator on~$\C(\Z) \times \C(\Z)$,
\[
(u \ast v)_{t} = \sum_{\tau \in \Z} u_{\tau} v_{t-\tau}.
\]
This operator is bilinear and commutative. 
If we fix~$\vphi \in \C(\Z)$, then~$\vphi * x$ becomes a linear operator on~$\C(\Z)$, which corresponds to linear time-invariant filtering of~$x \in \C(\Z)$ with filter~$\vphi$.
\begin{definition}
\label{def:intro-reproducing}
{\em We call~$\vphi \in \C(\Z)$ {\em a reproducing filter} for~$X \subseteq \C(\Z)$ if~$\vphi * x = x$ for all~$x \in X$.}
\end{definition}
\noindent It is clear that, for any~$\vphi \in \C(\Z)$, the largest set~$X \subseteq \C(\Z)$ reproduced by~$\vphi$ is an SIS.
Now, let us define ``band-limited'' subspaces of~$\C(\Z)$, which correspond to finitely-supported sequences:
\[
\C_m(\Z) := \{x \in \C(\Z):\;\; |t| > m \;\Rightarrow\; x_{t} = 0 \}.
\]
Note that if~$\vphi \in \C_m(\Z)$, the convolution~$\vphi * x$ corresponds to {\em local} linear time-invariant filtering.
Now, if~$X$ is an~$s$-dimensional SIS, then there is a unique filter supported on~$\{1, ..., s\}$ and such that~$X$ is its maximal reproduced set, namely~$I-\sP(\Delta)$ where~$I$ is the identity operator on~$\C(\Z)$; see~\cite[Prop.~4.1.2]{ostrovskii2018adaptive}.
Yet, increasing the support we lose uniqueness of a reproducing filter, and it is natural to select a filter by minimizing certain norm.
It is, furthermore, natural to look for guarantees that are uniform over all SIS of a given dimension~$s$, and thus can only depend on~$s,m$, and the norm used.
As for the choice of norm(-s) to use, it turns out to be dictated by statistical considerations. 
Indeed, let~$\F_n: \C_n(\Z) \to \C^{2n+1}$ be the {discrete Fourier transform}:
\begin{equation*}
\label{def:intro-DFT}
(\F_n[x])_k = \frac{1}{\sqrt{2n+1}} \sum_{\tau \in [n]_{\pm}} \exp\left(-\frac{i2\pi k \tau}{2n+1}\right) x_{\vphantom{k,n}\tau}^{\vphantom{-\tau}}.
\end{equation*}
From~\cite{harchaoui2019adaptive} we extract the following two results whose combination leads to~\eqref{eq:state-of-art-full}, as we show below.
\begin{theorem}[{\cite{harchaoui2019adaptive}}]
\label{th:l2con-simp}
Assume~$X$ is an~$s$-dimensional SIS reproduced by~$\vphi^X \in \C_n(\Z)$ with~$n \gasymp s$, 
\begin{equation}
\label{eq:l2con-simp-premise}
\|\F_n[\vphi^X]\|_1 \le \frac{\sR_1}{\sqrt{2n+1}}, \quad \|\F_n[\vphi^X]\|_2 \le \frac{\sR_2}{\sqrt{2n+1}}. 
\end{equation}
Then the estimator~$\wh x = \wh \vphi * y$, where~$\wh \vphi = \wh \vphi(y)$ is an optimal solution to the optimization problem
\begin{equation}
\label{opt:l2con-simp}
\min_{\vphi\in \C_{n}(\Z)}
\left\{ 
\|\F_n[\vphi*y - y]\|_{2}: \quad \|\F_n[\vphi]\|_{1} \le \frac{\sR_1}{\sqrt{2n+1}}
\right\},
\end{equation}
admits the following guarantee for~$\delta$-risk:
$
\Risk_{n,\delta}(\wh x|X) \lasymp \frac{\sigma^2}{2n+1} 
\big(\sR_1 \log ({en}{\delta^{-1}}) + \sR_2^2 \log(e\delta^{-1}) + s \big).
$
\end{theorem}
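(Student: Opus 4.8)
The plan is to decompose the error $\wh x - x = \wh\vphi * y - x$ using the reproducing property $\vphi^X * x = x$, and to exploit the constrained structure of the optimization problem~\eqref{opt:l2con-simp} via a standard "basic inequality" argument for $M$-estimators. Write $y = x + \sigma\xi$ on $[n]_\pm$. Since $\vphi^X$ is feasible for~\eqref{opt:l2con-simp} (its $\ell_1$-Fourier norm is bounded by $\sR_1/\sqrt{2n+1}$ by~\eqref{eq:l2con-simp-premise}) and $\wh\vphi$ is optimal, we get the comparison
\[
\|\F_n[\wh\vphi * y - y]\|_2 \le \|\F_n[\vphi^X * y - y]\|_2 = \|\F_n[(\vphi^X - I) * (\sigma\xi)]\|_2,
\]
using $\vphi^X * x = x$ in the last step (all convolutions understood as truncated to $\C_n(\Z)$, which is where the Fourier transform $\F_n$ operates; one has to be a little careful that the truncation is harmless because $\vphi^X \in \C_n(\Z)$ and we only look at the output on $[n]_\pm$ — this bookkeeping is routine but should be stated). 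Then by the triangle inequality,
\[
\|\F_n[\wh x - x]\|_2 \le \|\F_n[\wh\vphi * y - y]\|_2 + \|\F_n[y - x]\|_2 + \text{(cross terms)} \lesssim \|\F_n[(\vphi^X - I) * \sigma\xi]\|_2 + \sigma\|\F_n[\xi]\|_2,
\]
and since $\F_n$ is unitary, $\|\F_n[\wh x - x]\|_2^2 = \sum_{t\in[n]_\pm}|\wh x_t - x_t|^2$, which is exactly $(2n+1)$ times the quantity whose tail we must control.

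Next I would bound the two stochastic terms. The term $\sigma\|\F_n[\xi]\|_2 = \sigma\|\xi\|_2$ is a $\chi^2$-type quantity with $2n+1$ degrees of freedom, contributing the $\sigma^2(2n+1)$ scale that, after dividing by $2n+1$, gives an $O(\sigma^2)$ term — but wait, this overcounts; the correct move is not to use a crude triangle inequality but to project. The cleaner route: let $V = \F_n[\{x : \vphi^X * x = x, x\in\C_n(\Z)\}]$, an $s$-dimensional (or $O(s)$-dimensional) subspace of $\C^{2n+1}$, and decompose $\xi = \xi_V + \xi_{V^\perp}$. The constraint forces $\wh x - y$ to be (approximately) a filtered version lying near $V^\perp$-controlled directions, so that $\wh x - x$ decomposes into a component in $V$ — of dimension $O(s)$, contributing $\sigma^2(s + \log\delta^{-1})/(2n+1)$ by a standard Gaussian concentration bound on a low-dimensional projection — and a bias/variance component governed by the filter $\vphi^X - I$. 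For the latter I would write
\[
\|\F_n[(\vphi^X - I)*\sigma\xi]\|_2 = \sigma\big\| \big(\sqrt{2n+1}\,\F_n[\vphi^X] - \mathbf 1\big) \odot \F_n[\xi]\big\|_2
\]
(diagonalization of convolution by DFT, up to a $\sqrt{2n+1}$ normalization factor coming from the convention in $(u*v)$ versus $\F_n$), so this is a weighted $\ell_2$-norm of a complex Gaussian vector with deterministic weights $w_k = \sqrt{2n+1}\,(\F_n[\vphi^X])_k - 1$. Its expectation is $\sigma^2\sum_k |w_k|^2 \le \sigma^2(\sR_2^2 + O(1))$ by~\eqref{eq:l2con-simp-premise}, and its fluctuations are controlled by Hanson–Wright / Bernstein for quadratic forms in Gaussians, where the sub-exponential norm is governed by $\max_k|w_k|^2 \le (\sqrt{2n+1}\|\F_n[\vphi^X]\|_\infty + 1)^2 \lesssim \sR_1^2$ (using $\|\cdot\|_\infty \le \|\cdot\|_1$), yielding a deviation term of order $\sigma^2\sR_1\log(\delta^{-1})$ after taking square roots — no, more carefully, Bernstein gives $\sum|w_k|^2|\xi_k|^2 \lesssim \sigma^2\sR_2^2 + \sigma^2\|w\|_\infty^2\log\delta^{-1} \lesssim \sigma^2(\sR_2^2 + \sR_1^2\log\delta^{-1})$; one then has to reconcile this with the advertised $\sR_1\log(en\delta^{-1})$ term, which suggests the sharper bound comes from a union bound over the $O(n)$ Fourier frequencies combined with $\|w\|_1 \lesssim \sqrt{2n+1}\sR_1$, i.e. an $\ell_1$-weighted bound $\sum|w_k||\xi_k|^2 \lesssim \|w\|_1(1 + \log(en\delta^{-1}))$ used in conjunction with $\|w\|_\infty$ control.

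The main obstacle I anticipate is getting the \emph{exact} advertised split — $\sR_1\log(en\delta^{-1})$ (linear in $\sR_1$, with the $\log n$ attached to it) versus $\sR_2^2\log(e\delta^{-1})$ (quadratic in $\sR_2$, no $\log n$) versus the clean additive $s$ — rather than a cruder bound of the shape $(\sR_1^2 + \sR_2^2)\log(en\delta^{-1})$. Achieving this requires interpolating between an $\ell_\infty$-weighted Gaussian tail bound (which naturally produces the $\log n$ from a union bound over frequencies and pairs with the $\ell_1$-type quantity $\sR_1$) and an $\ell_2$-weighted bound (which produces the dimension-free $\sR_2^2$). Concretely I would split the frequencies into a "large-weight" set where $|w_k|$ exceeds a threshold — there are at most $O(\sR_1^2/\text{threshold}^2)$ of them by the $\ell_2$ bound, handled by a union bound contributing $\log(en)$ each — and a "small-weight" complement handled by the $\ell_2$/Hanson–Wright bound contributing $\sR_2^2\log\delta^{-1}$ dimension-free; optimizing the threshold should produce the stated form. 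The remaining cross terms (e.g. $\langle \F_n[\wh x - x], \F_n[(\vphi^X-I)\sigma\xi]\rangle$) are handled by Cauchy–Schwarz absorbed into the dominant terms, which is routine. The $s$ term is simply the cost of the $O(s)$-dimensional subspace $V$ into which the constrained estimator's "in-model" component falls, plus the dimension of the filter support entering through the feasibility set.
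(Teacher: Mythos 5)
Your high-level skeleton (basic inequality from feasibility of the comparator $\vphi^X$, a low-dimensional projection giving the additive $s$, and weighted chi-square bounds for the filter--noise terms) is in the right spirit, and it is in fact close to how the cited proof and this paper's proof of the stronger Theorem~\ref{th:l2con-core} proceed. However, your argument has a genuine gap at its technical core: the identity $\F_n[(\vphi^X-I)*\sigma\xi] = \sigma(\sqrt{2n+1}\,\F_n[\vphi^X]-\mathbf 1)\odot\F_n[\xi]$ is false. The DFT diagonalizes \emph{circular} convolution, not the linear convolution restricted to $[n]_\pm$: for $\vphi\in\C_n(\Z)$ the slice $(\vphi*\xi)_t$, $t\in[n]_\pm$, depends on the $4n+1$ noise variables $\xi_\tau$, $\tau\in[2n]_\pm$, and is given by the Toeplitz map $\Filt_{n,n}(\vphi)\{\xi\}_{2n}$, not by a diagonal operator in the $\F_n$ basis. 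Dismissing this as "routine bookkeeping" hides exactly the difficulty the actual proofs must confront: one embeds $\Filt_{n,n}(\vphi)$ into the circulant $\Circ_{n,n}(\vphi)$, which is diagonalized by the \emph{oversampled} DFT $\F_{2n}$, and must then relate $\|\F_{2n}[\vphi]\|$ to the constrained quantities $\|\F_n[\vphi]\|_1,\|\F_n[\vphi]\|_2$ via Dirichlet-kernel (oversampling) estimates -- this is where the $\log$ factors really come from, and it is why your "weights $w_k$" computation, which silently assumes a periodic model, does not establish the claimed bound. (If the model were exactly diagonal, the statement would reduce to sparse sequence-model denoising and there would be nothing to prove.)

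A second, related gap is in how you pass from the residual comparison to the error and handle the stochastic terms. The correct route is not a triangle inequality plus an ad hoc projection, but the exact expansion $\|x-\wh\vphi*y\|_{n,2}^2 = \|y-\wh\vphi*y\|_{n,2}^2 - \sigma^2\|\xi\|_{n,2}^2 - 2\sigma\Re\lang\xi, x-\wh\vphi*y\rang_n$, in which the $\sigma^2\|\xi\|_{n,2}^2$ term cancels against the expansion of the comparator residual, leaving $\sigma^2\|\vphi^X*\xi\|_{n,2}^2$, $\sigma^2\lang\xi,\vphi^X*\xi\rang_n$, and the cross term with the \emph{random} filter. That cross term is not "routine Cauchy--Schwarz": the piece $\lang\xi,\wh\vphi*\xi\rang_n$ must be bounded uniformly over the feasible $\ell_1$-ball (via its extreme points, which are $1$-sparse in the spectral domain -- this is precisely what produces $\sR_1\log(en\delta^{-1})$), and the signal piece requires the observation that $x-\wh\vphi*x\in X$ by shift-invariance of $X$, so that the noise can be projected onto the $s$-dimensional slice $T_n(X)$ and the remainder absorbed -- that, not the fixed-point set of $\vphi^X$ you call $V$, is what yields the clean additive $s+\log(\delta^{-1})$. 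Your large/small-weight splitting heuristic for obtaining $\sR_1\log(en\delta^{-1})+\sR_2^2\log(e\delta^{-1})$ is also not the mechanism at work: the two terms come from different sources (the uniform bound over the feasible set versus the deviation bound for the fixed comparator's quadratic form), so as written the proposal would at best deliver a bound of the cruder shape you yourself flag, and only in an idealized periodic model.
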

\begin{proposition}[{\cite{harchaoui2019adaptive}}]
\label{prop:suboptimal-oracle}
Let~$n = 2m \ge 2s-2$. 
Any~$s$-dim.~SIS is reproduced by some~$\vphi \in \C_{n}(\Z)$:
\[
\|\F_{n}[\vphi]\|_2 \le \|\F_{n}[\vphi]\|_1 \le \frac{4s}{\sqrt{2n+1}}.
\]
\end{proposition}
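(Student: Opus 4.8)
The plan is to move to the frequency domain, exploit the evenness $n=2m$ by "squaring" a shorter filter, identify the shortest reproducing filter on the $(2m{+}1)$-window with the diagonal of a projector, and bound that diagonal by a Christoffel/Tur\'an-type inequality.

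\emph{Step 1 (frequency-domain reformulation).} An $s$-dimensional SIS $X$ is the solution set of $\sP(\Delta)x\equiv 0$ for a unique $\sP$ with $\deg\sP=s$, $\sP(0)=1$; let $\zeta_1,\dots,\zeta_r\in\C\setminus\{0\}$ be the characteristic roots (reciprocals of the roots of $\sP$) with multiplicities $m_1,\dots,m_r$, $\sum_j m_j=s$. For $\varphi\in\C_n(\Z)$ put $G(z):=\sum_{|\tau|\le n}\varphi_\tau z^{-\tau}$, so $(\varphi\ast\zeta^{\cdot})_t=\zeta^t G(\zeta)$; differentiating in $\zeta$ shows that $\varphi$ reproduces $X$ iff $G-1$ vanishes to order $m_j$ at each $\zeta_j$, i.e.\ $G$ obeys $s$ Hermite conditions. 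Since $\F_n$ is unitary on $\C_n(\Z)$ and $(\F_n[\varphi])_k=(2n{+}1)^{-1/2}G(\omega^k)$ with $\omega:=e^{2\pi i/(2n+1)}$, the bound $\|\F_n[\varphi]\|_2\le\|\F_n[\varphi]\|_1$ is the generic $\|v\|_2\le\|v\|_1$, and it remains to produce a reproducing $\varphi$ with $\sum_{|k|\le n}|G(\omega^k)|\le 4s$.

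\emph{Step 2 (the squaring trick; this is why $n=2m$).} If $\varphi_1\in\C_m(\Z)$ reproduces $X$ then so does $\varphi:=\varphi_1\ast\varphi_1\in\C_{2m}(\Z)=\C_n(\Z)$, since $\varphi\ast x=\varphi_1\ast(\varphi_1\ast x)=x$. The support $\{-2m,\dots,2m\}$ of $\varphi$ is a full residue system mod $2n{+}1=4m{+}1$, so this linear convolution is aliasing-free; hence $\F_n[\varphi]=\sqrt{2n{+}1}\,\F_n[\varphi_1]\odot\F_n[\varphi_1]$ and, by Parseval,
\[
\|\F_n[\varphi]\|_1=\sqrt{2n{+}1}\,\|\F_n[\varphi_1]\|_2^2=\sqrt{2n{+}1}\,\|\varphi_1\|_2^2 .
\]
So it suffices to find $\varphi_1\in\C_m(\Z)$ reproducing $X$ with $\|\varphi_1\|_2^2\le 4s/(4m{+}1)$.

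\emph{Step 3 (the shortest reproducing filter).} By shift-invariance, "$\varphi_1\ast x=x$ for all $x\in X$" is equivalent to the single centred identity "$\sum_{|\tau|\le m}\varphi_{1,\tau}\,x_{-\tau}=x_0$ for all $x\in X$"; i.e.\ (after reversal) $\varphi_1$ is a vector $u\in\C^{2m+1}$ with $\langle u,v\rangle=v_0$ for every $v$ in $V:=\{x|_{[m]_\pm}:x\in X\}$. Since $2m{+}1\ge s$ and no nonzero element of an $s$-dim SIS vanishes on $s$ consecutive integers, $\dim V=s$; the least-norm such $u$ is $Pe_0$ with $P$ the orthogonal projector onto $V$, whence
\[
\min_{\varphi_1\in\C_m(\Z)\ \text{reproduces}\ X}\|\varphi_1\|_2^2 \;=\; P_{00}\;=\;\max_{0\neq x\in X}\frac{|x_0|^2}{\sum_{|\tau|\le m}|x_\tau|^2}.
\]
Thus the entire statement reduces to the bound $P_{00}\le 4s/(4m{+}1)$: the squared central value of an exponential polynomial of total degree $<s$ is at most $\tfrac{4s}{4m+1}$ times its squared $\ell_2$-mass on $2m{+}1$ consecutive samples. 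I expect this last step to be the main obstacle. The route I would take: factor $\sP=\sP_\circ\sP_\bullet$ into the parts carrying the roots on and off the unit circle, with degrees $s_\circ$ and $s_\bullet=s-s_\circ$ and sub-SIS $X_\circ,X_\bullet$. For $X_\bullet$, truncating the Laurent series of $1/\sP_\bullet(z^{-1})$ (analytic in an annulus around $\T$) to degree $m_\bullet$ gives a reproducing filter with $\|\cdot\|_2^2$ geometrically small in $m_\bullet$, so a budget $m_\bullet\asymp\log m$ makes its contribution $\ll s/m$; the identity $\wh\varphi_1=1-(1-\wh{\varphi_1^\circ})(1-\wh{\varphi_1^\bullet})$ — valid since $1-\wh{\varphi_1^\bullet}$ vanishes at the roots of $X_\bullet$ and $1-\wh{\varphi_1^\circ}$ at those of $X_\circ$ — glues the two pieces into one reproducing $\varphi_1\in\C_{m_\circ+m_\bullet}(\Z)\subseteq\C_m(\Z)$, with the cross terms negligible by Cauchy--Schwarz. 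This reduces matters to the on-circle case, where one must prove the Tur\'an-type inequality $P_{00}^{\circ}\le C\,s_\circ/m$ with an absolute (indeed smallish) constant. The point is that $\tr P^{\circ}=s_\circ$, so the \emph{average} diagonal entry is exactly $s_\circ/(2m{+}1)$ and the claim is that the \emph{central} entry is within a constant factor of the average; this is plausible because $V_\circ$ is the restriction of a shift-invariant space, so its reproducing-kernel diagonal is nearly translation-invariant away from the window's ends — but making this precise, and pinning the constant (via discrete Chebyshev/Gram-polynomial asymptotics or a direct Markov--Bernstein estimate), is the real content. Granting it, the factor $4$ in the statement comfortably absorbs $C$ together with the off-circle and cross terms, while Steps 1--3 are essentially bookkeeping.
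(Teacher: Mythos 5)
Your Steps 1--2 are fine and essentially coincide with the paper's own argument: the autoconvolution $\varphi=\varphi_1\ast\varphi_1$ together with Parseval is exactly how the $\ell_1$ bound $\|\F_n[\varphi]\|_1=\sqrt{2n+1}\,\|\varphi_1\|_2^2$ is obtained there, and your Step 3 correctly identifies the minimal $\ell_2$-norm of a reproducing filter in $\C_m(\Z)$ with the central diagonal entry $P_{00}$ of the projector onto the $(2m+1)$-window restriction of $X$. But the proposal stops exactly where the content is: the bound $P_{00}\le 4s/(4m+1)$ is never established (you write ``granting it''), and the route you sketch for it is both harder than necessary and flawed as stated. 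The off-circle part is not uniform: the annulus of analyticity of $1/\sP_\bullet(z^{-1})$ shrinks as the roots approach $\T$, so truncating its Laurent series at degree $\asymp\log m$ gives no estimate that holds uniformly over all $s$-dimensional SIS, which is what the proposition requires; and the on-circle Tur\'an/Christoffel inequality with an explicit absolute constant is a nontrivial project in its own right, not bookkeeping.

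The paper sidesteps all of this with one observation you are missing: you do not need the \emph{central} sample to be reproduced by a small-norm functional, because by shift-invariance any sample will do. Work with the half-window $\{0,\dots,m\}$: the orthogonal projector $\Pi_m$ onto the restriction of $X$ to this window satisfies $\|\Pi_m\|_{\sF}^2=\tr(\Pi_m)\le s$, so by pigeonhole some row $w$, located at some position $t_0\in\{0,\dots,m\}$, has $\|w\|_2^2\le s/(m+1)$. The identity $x_{t_0}=\sum_{\tau=0}^m w_\tau x_\tau$ holds for all $x\in X$; rewriting it as a convolution gives a filter supported in $\{t_0-m,\dots,t_0\}\subseteq[-m,m]$, and shift-invariance upgrades reproduction at the single time $t_0$ to reproduction everywhere. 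This yields a reproducing $\varphi_1\in\C_m(\Z)$ with $\|\varphi_1\|_2^2\le s/(m+1)\le 4s/(4m+1)$ --- precisely the estimate your Step 3 needs (and a fortiori an upper bound on your $P_{00}$) --- after which your Step 2 finishes the proof. So the gap is the single quantitative step, and the fix is the pigeonhole-over-rows trick rather than any polynomial-inequality machinery.
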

\noindent Proposition~\ref{prop:suboptimal-oracle} allows to instantiate Theorem~\ref{th:l2con-simp} with~$\sR_1 = \sR_2 = s$, which implies the guarantee
\begin{equation}
\label{eq:state-of-art-core}
\Risk_{n,\delta}(\wh x|\XX_s) \lasymp \frac{\sigma^2}{2n+1} \left( s^2\log(e\delta^{-1}) + s\log n \right).
\end{equation}
This guarantee is attained with a tractable estimate, whose computation reduces to solving a convex program~\eqref{opt:l2con-simp}. 
The only issue preventing us from verifying the worst-case risk bound~\eqref{eq:state-of-art-full} is that the estimated ``slice''~$(\wh x_{t})_{t \in [n]_{\pm}}$ depends on the observations~$(y_\tau)_{\tau \in [2n]_{\pm}}$; simply put, we only estimate~$x_t$ on the ``core'' segment~$[n]_{\pm}$ of the whole observation domain~$[2n]_{\pm}$. 
As we discuss in Section~\ref{sec:l2-full}, there is a simple trick to convert such a ``core'' estimate into a full-domain one, at the price of the~$\log(n/s)$ factor: it suffices to compute the same estimator over a sequence of shrinking intervals whose pattern follows Cantor's ternary set. Thus, we convert~\eqref{eq:state-of-art-core} to~\eqref{eq:state-of-art-full}.







\vspace{-0.2cm}
\paragraph{Towards our results.}
The~$s^2$ term in the right-hand sides of~\eqref{eq:state-of-art-core} and~\eqref{eq:state-of-art-full} results from the~$\ell_2$-norm bound in Proposition~\ref{prop:suboptimal-oracle}, and would be improved to~$s$ if we strengthened Proposition~\ref{prop:suboptimal-oracle} by exhibiting a reproducing filter that satisfies~\eqref{eq:l2con-simp-premise} with~$\sR_1 \lasymp s$ and~$\sR_2 \lasymp s^{1/2}$. 
However, even this would not allow us to match the lower bound in~\eqref{eq:intro-lower} up to a logarithmic factor. This is because the confidence factor~$\log(\delta^{-1})$ in the risk guarantee of Theorem~\ref{th:l2con-simp} enters multiplicatively with~$s$-dependent factors~$\sR_1$ and~$\sR_2$.
There is, moreover, no hope to make either of these factors dimension independent. 
Indeed: for any reproducing filter on~$X$, one has~$\sR_1 \ge \sR_2$; on the other hand, we observe that~$\sR_2 \gasymp s^{1/2}$ by interpreting such a filter as a linear unbiased estimator on~$X$ whose expected squared loss~$\frac{\sigma^2}{2n+1}\sR_2^2$, and invoking the Gauss-Markov theorem. 
As such, in order to match the lower bound~\eqref{eq:intro-lower} one must not only strengthen Proposition~\ref{prop:suboptimal-oracle}, but also improve Theorem~\ref{th:l2con-simp} by decoupling the term~$\log(\delta^{-1})$ from both~$s$-dependent terms in~\eqref{eq:state-of-art-core}.

\subsection{Our results and organization of the paper}
The discussion in the above paragraph motivates our {\bf two key results}, which we now present.


\vspace{-0.1cm}
\begin{proposition}[\odima{Simplified version of Proposition~\ref{prop:hybrid-oracle}}]
\label{prop:hybrid-oracle-intro}
\odima{Assuming that~$s \in \N$ and~$n + 1 \ge s$, arbitrary~$s$-dimensional shift-invariant subspace of~$\C(\Z)$ admits a reproducing~$\vphi \in \C_{n}(\Z)$ with}\vspace{-0.1cm}
\begin{align}
\label{eq:oracle-norms-intro}
\|\F_{n}[\vphi]\|_p \le \frac{(\odima{2s})^{1/p}}{\sqrt{2n+1}} \quad\quad \forall p \in [1,+\infty].
\end{align}
\end{proposition}
\odima{Postponing any further discussion of this result until Section~\ref{sec:oracle}, let us only note here that the constant factor~$2$ multiplying~$s$ in the numerator is almost sharp: one can verify that~\eqref{eq:oracle-norms-intro} with~$1$ instead of~$2$ is attained on any subspace of {\em periodic} harmonic oscillations with~$s$ frequencies.}
\begin{theorem}[Version of Theorem~\ref{th:l2con-core} with generic constants]
\label{th:l2-core-intro}
Assume~$X$ is an~$s$-dimensional SIS reproduced by~$\vphi^X \in \C_n(\Z)$ with~$n \gasymp s$ and
\begin{equation}
\label{eq:l2-core-intro-premise}
\begin{aligned}
\|\F_n[\vphi^X]\|_1 \le \frac{\sR_1}{\sqrt{2n+1}}, \quad \|\F_n[\vphi^X]\|_2 &\le \frac{\sR_2}{\sqrt{2n+1}}, \quad \|\F_n[\vphi^X]\|_\infty \le \frac{\sR_\infty}{\sqrt{2n+1}},
\end{aligned}
\end{equation}
where~$\sR_1 \asymp s,$~$\sR_2 \asymp \sqrt{s},$ and~$\sR_\infty \asymp 1.$
Then~$\wh x = \wh \vphi * y$, where~$\wh \vphi = \wh \vphi(y)$ is an optimal solution to 
\begin{equation}
\label{opt:l2-core-intro}
\min_{\vphi\in \C_{n}(\Z)}
\left\{ 
\|\F_n[\vphi*y - y]\|_{2}: \quad \|\F_n[\vphi]\|_{1} \le \frac{\sR_1}{\sqrt{2n+1}}, \quad \|\F_n[\vphi]\|_{\infty} \le \frac{\sR_\infty}{\sqrt{2n+1}}
\right\},
\end{equation}
admits the following bound on the worst-case~$\delta$-risk:
\begin{equation}
\label{eq:l2-core-intro-risk}
\begin{aligned}
\Risk_{n,\delta}(\wh x|X) 
&\lasymp \frac{\sigma^2}{2n+1} \Big(s\log (en)  + \log(\delta^{-1})\Big) \log^2(es).
\end{aligned}
\end{equation}
%
\end{theorem}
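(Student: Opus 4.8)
The plan is to follow the variational route behind Theorem~\ref{th:l2con-simp} of~\cite{harchaoui2019adaptive}, with the new $\ell_\infty$-constraint in~\eqref{opt:l2-core-intro} playing the single role of decoupling the confidence parameter $\log(\delta^{-1})$ from the dimension $s$. Working within the ``core'' regime (cf.~Section~\ref{sec:l2-full}), one may assume that for every $\vphi \in \C_n(\Z)$ the convolution $\vphi*y$ equals $\vphi*x + \sigma\,\vphi*\xi$ on the whole of $[n]_\pm$ -- this is exactly why the estimated slice depends on observations over a window of width $\gtrsim 4n$, to be undone later by the Cantor-type wrapper of Section~\ref{sec:l2-full}. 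Let $\vrho \in \C_n(\Z)$ be the unit-impulse filter ($\vrho_0 = 1$, $\vrho_t = 0$ otherwise) and $X_n := \{(x_t)_{t\in[n]_\pm}: x \in X\}$, an $s$-dimensional subspace of $\C^{2n+1}$ since $n \gasymp s$. As $\F_n$ is unitary and $\vphi^X*x = x$, the squared objective of~\eqref{opt:l2-core-intro} at a feasible $\vphi$ equals $\|b_\vphi + \sigma z_\vphi\|_2^2$ with $b_\vphi := ((\vphi-\vphi^X)*x)|_{[n]_\pm} \in X_n$ (so $b_{\vphi^X} = 0$) and $z_\vphi := ((\vphi-\vrho)*\xi)|_{[n]_\pm}$; writing $z_\vphi = z_{\vphi^X} + \zeta_\vphi$ with $\zeta_\vphi := ((\vphi-\vphi^X)*\xi)|_{[n]_\pm}$ and using that $\vphi^X$ is feasible for~\eqref{opt:l2-core-intro}, optimality of $\wh\vphi$ yields the basic inequality
\begin{equation*}
\|b_{\wh\vphi}\|_2^2 \ \le\ 2\sigma\,\bigl|\langle b_{\wh\vphi},z_{\vphi^X}\rangle\bigr| \ +\ 2\sigma\,\bigl|\langle b_{\wh\vphi},\zeta_{\wh\vphi}\rangle\bigr| \ +\ 2\sigma^2\,\bigl|\langle z_{\vphi^X},\zeta_{\wh\vphi}\rangle\bigr| .
\end{equation*}
Moreover, on $[n]_\pm$ one has $\wh x - x = b_{\wh\vphi} + \sigma\,(\wh\vphi*\xi)|_{[n]_\pm}$, so it suffices to bound $\|b_{\wh\vphi}\|_2^2$ and $\sigma^2\|(\wh\vphi*\xi)|_{[n]_\pm}\|_2^2$ with probability $\ge 1-\delta$.

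\noindent\textbf{Controlling the noise.}
The pivotal estimate is a bound on $\|(\vphi*\xi)|_{[n]_\pm}\|_2^2$ uniform over the feasible set $\Phi = \{\vphi: \sqrt{2n+1}\,\|\F_n\vphi\|_1 \le \sR_1,\ \sqrt{2n+1}\,\|\F_n\vphi\|_\infty \le \sR_\infty\}$. Passing to Fourier coordinates and absorbing the $[2n]_\pm \!\to\! [n]_\pm$ restriction into convolution with a Dirichlet-type kernel (an $\ell_2$-contraction), this becomes, up to the contraction, a weighted chi-square $\sum_k |(\F_n\vphi)_k|^2 |g_k|^2$ in an i.i.d.\ standard complex Gaussian $g$, with weights of total mass $\le \|\F_n\vphi\|_1\|\F_n\vphi\|_\infty \le \sR_1\sR_\infty/(2n+1) \asymp s/(2n+1)$ and -- the crucial new point -- maximal weight $\le \sR_\infty^2/(2n+1) \asymp 1/(2n+1)$: the $\ell_\infty$-cap forces the weights to be spread over $\gtrsim \sR_1/\sR_\infty \asymp s$ coordinates, so a Bernstein bound for weighted chi-squares makes $\log(\delta^{-1})$ enter \emph{additively} rather than multiplied by $s$. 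Making this uniform over the data-dependent $\wh\vphi$ calls for a chaining bound over an $\ell_2$-net of $\Phi$; since $\Phi$ is, in Fourier coordinates, an $\ell_1$-ball of radius $\asymp s/\sqrt{2n+1}$ capped at level $\asymp 1/\sqrt{2n+1}$ in $\R^{2n+1}$, the entropy integral spans $\asymp\log(\sR_1/\sR_\infty)\asymp\log(es)$ decades and, the underlying process being quadratic, one arrives at $\sup_{\vphi\in\Phi}\|(\vphi*\xi)|_{[n]_\pm}\|_2^2 \lsim (s\log(en)+\log\delta^{-1})\log^2(es)$ with probability $\ge 1-\delta$ -- here the $\ell_1$-cap times the ambient dimension produces the $s\log(en)$ and the capped-$\ell_1$ chaining produces the (inessential) $\log^2(es)$. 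The same toolbox, now applied after projecting the noise onto the $s$-dimensional subspace $X_n$ and exploiting that $\vphi^X$ \emph{nearly} reproduces $X$ (so that the $X_n$-components of $z_{\vphi^X}$ are small), bounds the first two inner products in the basic inequality by $\|b_{\wh\vphi}\|_2 \cdot \sigma\sqrt{(s\log(en)+\log\delta^{-1})\log^2(es)}$, while the third -- a quadratic form in $\xi$ -- is controlled by a Hanson--Wright-type estimate, again uniform over $\Phi$, by $\sigma^2(s\log(en)+\log\delta^{-1})\log^2(es)$.

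\noindent\textbf{Conclusion and main obstacle.}
Inserting these bounds into the basic inequality and solving the resulting quadratic inequality in $\|b_{\wh\vphi}\|_2$ gives $\|b_{\wh\vphi}\|_2^2 \lsim \sigma^2(s\log(en)+\log\delta^{-1})\log^2(es)$; adding the variance bound $\sigma^2\|(\wh\vphi*\xi)|_{[n]_\pm}\|_2^2 \lsim \sigma^2(s\log(en)+\log\delta^{-1})\log^2(es)$, dividing by $2n+1$, and intersecting the $O(1)$ favorable events (after rescaling $\delta$ by a constant) yields~\eqref{eq:l2-core-intro-risk}; the ``core'' estimate is then globalized through Section~\ref{sec:l2-full}. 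I expect the genuine difficulty to be the bias term: turning ``$\wh\vphi$ is no worse than $\vphi^X$ in objective'' into a real bound on $\|b_{\wh\vphi}\|_2$ forces one to understand the geometry of the image $\{((\vphi-\vphi^X)*x)|_{[n]_\pm}:\vphi\in\Phi\}$ inside $X_n$, to use the near-reproduction property of $\vphi^X$ to tame the projected noise, and to run every concentration step uniformly over the data-dependent $\wh\vphi$ with the $\ell_\infty$-cap active so that no surviving term couples $s$ with $\log(\delta^{-1})$ -- and it is precisely here that the $\log^2(es)$ overhead is paid. Everything else -- the Fourier and weighted-chi-square computations, the Dirichlet-kernel treatment of the restriction, and the reduction to the core regime -- is routine.
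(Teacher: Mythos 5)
Your overall architecture matches the paper's: compare $\wh\vphi$ against the oracle filter via the optimality/basic inequality, use shift-invariance to push the bias--noise cross term through the $s$-dimensional slice $X_n$ (this step, which you flag as the ``genuine difficulty,'' is actually dispatched in one line in the paper: $x-\wh\vphi*x\in X$, so the noise is replaced by $\Pi_n T_n\xi$ with $\|\Pi_n T_n\xi\|_2^2\sim\chi^2_{2s}$), and reduce everything to uniform control of the linear and quadratic Gaussian-chaos terms $\Re\langle\xi,\vphi*\xi\rangle_n$ and $\|\vphi*\xi\|_{n,2}^2$ over the feasible set $\Phi$, with the $\ell_\infty$-cap responsible for making $\log(\delta^{-1})$ additive. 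The genuine gap is precisely at that uniform-control step, which you dispose of with an unproven chaining claim (``the entropy integral spans $\log(es)$ decades \dots one arrives at \dots''). The paper does something different and concrete here: by Krein--Milman and Proposition~\ref{prop:cap-comp}, the suprema over $\Phi$ are attained at extreme points whose DFT is an $18s$-sparse vector with unimodular entries; the phases are discretized to $\{\pm1,\pm i\}$ at the cost of $\sqrt2$, and a union bound over the $2^{36s}\binom{2n+1}{18s}$ resulting points yields the $s\log(2n+1)$ term, while a one-sided Bernstein bound for Hermitian forms \eqref{eq:quad-form-indef} is applied at each fixed extreme point. Carrying out your generic-chaining alternative for a second-order chaos over a capped $\ell_1$-ball composed with the (non-diagonal) restriction operator is a substantial piece of work that you have not done, and it is not evident it reproduces $s\log(en)$ rather than something worse.

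A second, related inaccuracy: after restricting to $[n]_\pm$ the quadratic form is \emph{not} ``up to an $\ell_2$-contraction'' a weighted chi-square $\sum_k|(\F_n\vphi)_k|^2|g_k|^2$ with maximal weight $\asymp\sR_\infty^2/(2n+1)$. The ordinary convolution is not diagonalized by $\F_n$; the paper embeds $\Filt_{n,n}(\vphi)$ into the $(4n+1)$-circulant $\Circ_{n,n}(\vphi)$, which is diagonal in the \emph{oversampled} DFT $\F_{2n}$, and the relevant operator norm is $(4n+1)\|\F_{2n}[\vphi]\|_\infty^2$. For $\vphi\in\Phi$ (via its sparse-spectrum extreme points and Lemma~\ref{lem:oversampling-linf-sparse}) this is $\asymp\log^2(es)$, not $O(1)$: the $\log^2(es)$ in \eqref{eq:l2-core-intro-risk} comes from this Dirichlet-kernel/Lebesgue-constant inflation (and from the trace bound via Lemma~\ref{lem:oversampling-linf}), not from any chaining overhead. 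In your accounting the same $\log^2(es)$ is attributed to the metric-entropy integral while the oversampling loss is silently set to $O(1)$, so even granting a chaining argument you risk either an unjustified constant-order operator norm or a duplicated logarithmic factor. To repair the proposal you would need to (i) replace the chaining sketch by the extreme-point/union-bound argument (or actually execute a chaining bound for the chaos, including the oversampling-corrected operator and Frobenius norms), and (ii) route the restriction through the circulant embedding with the sparse-spectrum $\ell_\infty$ oversampling bound, as in \eqref{eq:Q-oper-prelim}--\eqref{eq:Q-oper} and \eqref{eq:conv-to-circ}--\eqref{eq:Lambda-frob}.
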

\noindent
Note that the factor in the parentheses in~\eqref{eq:l2-core-intro-risk} can be read as
$
\sR_1 \log(n) + \sR_2^2 + \sR_{\infty}^2\log(\delta^{-1})
$
for~$\sR_1, \sR_2, \sR_\infty$ as in the premise of the theorem; thus, we indeed have managed to decouple~$\log(\delta^{-1})$ from both~$s$-dependent factors, as prescribed.
Combining Proposition~\ref{prop:hybrid-oracle-intro} and Theorem~\ref{th:l2-core-intro}, and noting that the estimator in~\eqref{opt:l2-core-intro} is pivotal to the subspace~$X$, we deduce that the bound~\eqref{eq:l2-core-intro-risk} extends to the worst-case $\delta$-risk~$\Risk_{n,\delta}(\wh x|\XX_s)$ of adaptive estimation over the union of {all}~$s$-dimensional SIS.
Finally, same as for~\eqref{eq:state-of-art-core}, we bound the minimax risk as
\begin{equation}
\label{eq:l2-core-intro-risk-minimax}
\begin{aligned}
\Risk_{n,\delta}^\star(\XX_s) 
&\lasymp \frac{\sigma^2}{2n+1} \Big(s\log (en)  + \log(\delta^{-1}) \Big) \log^2(es)  \log(en/s),
\end{aligned}
\end{equation}
by constructing a full-domain estimate in a multi-scale fashion; see Corollary~\ref{cor:l2-full} in Section~\ref{sec:l2-full} for the precise result.
Thus, the lower bound~\eqref{eq:intro-lower} is matched up to logarithmic factors, and our proposed near-minimax estimator is tractable -- computed via second-order cone programming.

We defer further discussion of these central results and intuition behind them to Sections~\ref{sec:oracle}--\ref{sec:l2-bound}.

\paragraph{Extensions.} 
In addition to those just discussed, we prove other results worth to be mentioned.
\begin{itemize}
\item[] 
{\bf One-sided recovery.}
In the second part of Section~\ref{sec:oracle}, we prove an analogue of Proposition~\ref{prop:hybrid-oracle-intro} where the reproducing filter is constrained to be causal (``one-sided'')---i.e.~ordinary, rather than Laurent, polynomial of degree~$s$. 
As it turns out, in this case the factors~$\sR_1$ and~$\sR_2$ degrade to~$s^2$ and~$s$, respectively; we show this to be optimal in the case of~$\sR_2$, and conjecture the bound on~$\sR_1$ to be optimal as well.
The statistical guarantee, i.e.~a counterpart of Theorem~\ref{th:l2-core-intro} where the estimated filter is constrained to be causal, deteriorates respectively. 
This result is rigorously formulated as Theorem~\ref{th:l2con-half} in Section~\ref{sec:l2-bound}.
\item[]
{\bf Signal detection.}
In Section~\ref{sec:l2-bound}, after discussing the full version of Theorem~\ref{th:l2-core-intro} (cf.~Theorem~\ref{th:l2con-core}) and constructing a near-minimax full-domain estimator based on it (Section~\ref{sec:l2-full}), we change our focus to the signal detection problem, first considered in~\cite{jn-2013}, where the goal is to discover the mere presence of~$x \in \XX_s$ in the Gaussian noise
In a nutshell, it turns out that the respective near-minimax detection threshold, measured in~$\ell_2$-norm, up to logarithmic factors matches the minimax detection threshold for {$s$-sparse signals} in Gaussian noise~\cite{ingster2010detection}. Simply put, it is almost no harder to {\em detect} on~$\XX_s$ than on~$\XX(\T_n^{(s)})$. 

\end{itemize}

\vspace{-0.2cm}
\paragraph{Paper organization.}
\odima{In Section~\ref{sec:oracle}, we prove and discuss Proposition~\ref{prop:hybrid-oracle-intro}, as well as a counterpart guarantee applicable to the one-sided (``causal") filtering scenario.}
Section~\ref{sec:l2-bound} is ``statistical:'' in it, we present statistical guarantees for estimators based on the analytical results presented in Section~\ref{sec:oracle}. 
We carry out the proofs of these guarantees in Section~\ref{sec:stat-proofs}. 
Some technical results on trigonometric interpolation and minimal-norm reproducing filters are deferred to appendices.
In particular, in Appendix~\ref{apx:dirichlet-and-fejer} we review the properties of the Dirichlet and Fej\'er kernels and establish the various bounds for summation of these kernels over equidistant grids;
in Appendix~\ref{apx:shift-inv} we exhibit the matching lower bound for the~$\ell_2$-norm of a causal reproducing filter.

\odima{
In the previous version of the paper, our proof of Proposition~\ref{prop:hybrid-oracle-intro} was erroneous. While the mistake can be corrected (which by itself is a nontrivial exercise in harmonic analysis), we instead chose to provide an alternative, more elegant construction using the Christoffel function.
}

\subsection{Notation}
\label{sec:notation}

\paragraph{Vectors, matrices.}
For~$m,n \in \N$, we index the entries of vectors in~$\C^n$ starting from~$0$; similarly for the rows and columns of matrices in~$\C^{m \times n}$.
For~$A \in \C^{m \times n}$, we denote with~$\bar A$ its entrywise conjugate (i.e.~$\bar A_{jk} = \overline{A_{jk}}$), with~$A^\top$ its transpose, with~$A^\htop = (\bar A)^\top$ its conjugate transpose.
We write~$\|\cdot\|_p$ and~$\lang \cdot, \cdot \rang$ for~$p$-norms and Hermitian dot product~$\langle a, b \rangle = a^\htop b$ on~$\C^n$.

\paragraph{Growth order.}
For two functions~$f,g > 0$ on the same domain, we write~$f \lasymp g$,~$f \gasymp g$,~$f \asymp g$ if there are some constants~$c,C > 0$ such that~${f}/{g} \le C$,~${f}/{g} \ge c$, and~$c \le {f}/{g} \le C$ respectively.

\paragraph{Sequences.}
We define the vector space of two-sided sequences~$\C(\Z) := \{(x_{t})_{t \in \Z}: x_t \in \C\}$.
For~$n \in \N$, let~$\C_n(\Z)$ be the subspace of~$\C(\Z)$ comprised of all~$x$ such that~$x_t = 0$ for~$|t| > n$. 
We let~$\|\cdot\|_p$,~$p \ge 1$, also be the~$p$-norm on~$\C(\Z)$, and~$\| \cdot \|_{n,p}$ be the seminorm on~$\C(\Z)$ as follows:
\[
\|x\|_{n,p} := \left( \sum_{t \in [n]_\pm} |x_t|^p \right)^{1/p}.
\] 
We define the Hermitian form~$\langle \cdot,\cdot\rangle_n$ on~$\C(\Z)$ by~$\langle u,v\rangle_n := \sum_{|t| \le n} \bar u_t v_t$; thus~$\|x\|_{n,2} = \sqrt{\langle x, x \rangle_n}$.

\paragraph{Convolution and~$z$-transform.}
The discrete convolution~$u \ast v$ of~$u, v \in \C(\Z)$ is defined by
\[
(u \ast v)_{t} = \sum_{\tau \in \Z} u_{\tau} v_{t-\tau}, \quad t \in \Z.
\]
We use the $z$-transform formalism, associating to~$u \in \C(\Z)$ the formal series~$u(z) := \sum_{\tau \in \Z} u_\tau z^{-\tau}$.
Clearly,~$[u*v](z) = u(z) v(z)$, so convolution is a commutative, associative, and bilinear operation. 
When viewed as a function on~$\T$, the~$z$-transform of~$u \in \C(\Z)$ is a Laurent series, or Laurent polynomial if~$u \in \C_n(\Z)$, and is also called the {discrete-time} Fourier transform (DTFT).

\paragraph{Discrete Fourier transform.}
For~$n \in \N$, we let~$\T_n$ be the set of all~$(2n+1)^{\textup{st}}$ roots of unity:
\begin{equation}
\label{def:DFT-grid}
\T_n := \left\{\chi_{k,n} := \exp\left(\frac{i2\pi k}{2n+1}\right),\;\; k \in [n]_{\pm} \right\}.
\end{equation}
We define the unitary {\em discrete Fourier transform} (DFT) operator~$\F_n: \C(\Z) \to \C^{2n+1}$ as follows:
\begin{equation*}
(\F_n[u])_k = (2n+1)^{-1/2} \sum_{\tau \in [n]_{\pm}} \chi_{k,n}^{-\tau} u_{\vphantom{k,n}\tau}^{\vphantom{-\tau}}.
\end{equation*}
(Here we use our convention of indexing~$\C^{2n+1}$ on~$\{0,...,2n\}$; note also that~$\chi_{k,n} = \chi_{k-2n-1,n}$.)
For~$u \in \C_n(\Z)$, this can be expressed succintly via~$z$-transform:
\begin{equation}
\label{def:DFT}
(\F_n[u])_k = (2n+1)^{-1/2} u(\chi_{k,n}^{-1}). 
\end{equation}
The restriction of~$\F_n$ on~$\C_n(\Z)$ is an isometry; its generalized inverse~$\F_n^{\,-1} : \C^{2n+1} \to \C_n(\Z)$ is
\begin{equation*}
\label{def:iDFT}
(\F_n^{\,-1}[a])_{\tau} =  (2n+1)^{-1/2} \ones\{\tau \in [n]_{\pm}\}  \sum_{k = 0}^{2n} \chi_{k,n}^{\tau} a_k^{\vphantom\tau}.
\end{equation*}
Furthermore, Parseval's identity states that~$\|u\|_{n,2} = \|\F_n[u]\|_2$ for all~$u \in \C(\Z)$. 
More generally,
\begin{equation}
\label{def:Parseval}
\lang u,v \rang_n = \langle \F_n[u], \F_n[v] \rangle \quad \forall u,v \in \C(\Z).
\end{equation}



\paragraph{Shift-invariant subspaces.}
For~$w_1, ..., w_s \in \C$, we let~$\X(w_1,...,w_s)$ be the solution set of the equation~\eqref{eq:intro-ODE} whose characteristic polynomial is 
$
\sP(z) = \prod_{k \in [s]} \left(1-{w_k}{z}\right),
$
i.e.~has~$w_1^{-1}, ..., w_s^{-1}$ as its roots.
Note that if all characteristic roots are distinct,~$X(w_1, ..., w_s)$ is the span of~$s$ complex exponentials~$w_1^t,..., w_s^t$, i.e.~$x_t = \sum_{k=1}^s c_k^{\vphantom t} w_k^t$ for~$c_1, ..., c_s \in \C$. 
Repeated roots result in polynomial modulation: if the root~$w_{k}^{-1}$ of~$\sP$ has multiplicity~$m_k$,~$\sum_{k \in [s]} m_k = s$, then~$X(w_1, ..., w_s)$ comprises all signals of the form~$x_t = \sum_{k \in [s]} q_k(t)^{\vphantom t} w_{k}^t$ where~$q_k$ is a polynomial of degree~$m_k - 1$.
\section{Reproducing filters with near-optimal norms}
\label{sec:oracle}



We begin this section by giving a detailed formulation of Proposition~\ref{prop:hybrid-oracle-intro} with concrete constants.

\begin{proposition}
\label{prop:hybrid-oracle}
Let~$n + 1 \ge s$. For arbitrary~$w_1, ..., w_s \in \C$, the shift-invariant subspace~$X = X(w_1, ..., w_s)$ admits a reproducing filter~$\vphi^X \in \C_{n}(\Z)$ such that
$\vphi^X(z) \in [0,1]$ on~$\T$, as well as
\begin{equation}
\label{eq:integral-bound}
\frac{1}{2\pi} \int_{-\pi}^\pi \vphi^X(e^{i\theta}) d\theta = \frac{s}{n+1}.
\end{equation}
As the result, the following inequalities hold simultaneously with the constant~$\smash{c_n = \frac{2n+1}{n+1} \le 2}$:
\begin{align}
\label{eq:oracle-norms}
\|\F_{n}[\vphi^X]\|_p &\le \frac{(c_n s)^{1/p}}{\sqrt{2n+1}} \quad \forall p \in [1,\infty].
\end{align}
Moreover, the certificate property holds if~$n \ge s$: one has~$\vphi^X < 1$ strictly on~$\T \setminus \{w_1^{-1}, \dots, w_s^{-1}\}$.
\end{proposition}

\begin{remark}
\label{rem:diagonal-case}
{\em Inequality~\eqref{eq:oracle-norms} is optimal up to a constant factor, in the sense that even without imposing~$\vphi(\cdot) \in [0,1]$ on~$\T$, the constant~$c_n$ cannot be decreased below~$1$. 
Indeed, fix arbitrary SIS of {\em periodic} on~$[n]_{\pm}$ harmonic oscillations, e.g.~$X = \{x \in \C(\Z): x_t = \sum_{k = 1}^s c_k \chi_{k,n}^t, \, c_1, ..., c_s \in \C\}.$
For any~$x \in X$ and~$\vphi \in \C_n(\Z)$, the ordinary convolution~$\vphi * x$ is equal to the cyclic convolution modulo~$2n+1$, whence 
$
(\F_n[\vphi * x]) = \sqrt{2n+1}\, (\F_n[\vphi])_k \, (\F_n[x])_k
$
by the DFT diagonalization property. 
Since~$\F_n$ is bijective,~$\vphi$ reproduces~$X$ if and only if~$(\F_n[\vphi])_k \equiv 1$ for all~$k \in \{1, ..., s\}$. 
Setting the remaining entries of~$\F_n[\vphi]$ to zero minimizes the norms~$\|\F_n[\vphi]\|_p$ for all~$p \in [1,+\infty]$, and the corresponding filter~$\vphi^* \in \C_n(\Z)$ satisfies
$\| \F_n[\vphi^*]\|_p \sqrt{2n+1} \le {s^{1/p}}$ granted~$2n+1 \ge s$.}
\end{remark}
\vspace{-0.3cm}
\odima{
\begin{remark}
{\em 
Note that in the previous example, the reproducing filter~$\vphi^*$ turned out to be nonnegative on the grid~$\T_n$, but not on~$\T$. If we add the latter requirement, then the constant~$c_n$ in Proposition~\ref{prop:hybrid-oracle} is sharp; in particular, it is attained by~$\psi = (\phi^*)^2$ from the previous example. 
We leave the proof as an exercise to the reader (it essentially reduces to the Fej\'er-Riesz theorem).
}
\end{remark}
\begin{remark}
{\em 
By the Fej\' er-Riesz theorem, there exists a polynomial~$p(z) = \sum_{\tau = 0}^n p_\tau z^\tau$ (that can be identified with a {\em causal} filter) with complex coefficients and roots outside the unit disk, such that~$\vphi^X(z) = |p^X(z)|^2$ over~$\T$. 
By Proposition~\ref{prop:hybrid-oracle}, this polynomial satisfies~$|p(w_k^{-1})| = 1$ and~$|p(z)| \le 1$ on~$\T$, where the inequality is strict on~$\T \setminus \{w_1^{-1}, \dots, w_s^{-1}\}$ once~$n \ge s$.
At first glance, this might seem to contradict the well-known results (see e.g.~\cite{moitra2015super}) on the existence certificate polynomials, namely that such polynomials only exist once the angular separation of~$\{w_1^{-1}, \dots, w_s^{-1}\}$---assuming these points are on the circle---exceeds the Abbe limit~$\frac{2\pi}{2n+1}$; moreover, this seems to be at odds with our own results in Section~\ref{sec:oracle-causal}, giving an~$\smash{O(\frac{s}{\sqrt{n+1}})}$ sharp bound for the~$\ell_2$-norm of a causal reproducing filter. The catch here is that the causal factor~$p(\cdot)$ interpolates~$1$ in the characteristic roots only {\em up to a phase}, whereas for it to be reproducing one would need~$p(w_k^{-1}) = 1$. 
This corresponds to the fact that the trigonometric polynomial~$\vphi^X$ interpolates~$1$ in the characteristic roots; meanwhile, the trigonometric polynomial interpolating an {\em arbitrary sign pattern} at~$\{w_1^{-1}, \dots, w_s^{-1}\}$, while remaining~$\in (-1,1)$ elsewhere on~$\T$, is only possible under the separation assumption (as immediately follows from the Bernstein theorem).
}
\end{remark}}

\odima{
Finally, for the reference we recall two elementary results on the existence of reproducing filters for shift-invariant subspaces, used in prior work (see e.g.~\cite{harchaoui2019adaptive}) to establish Proposition~\ref{prop:suboptimal-oracle} and Theorem~\ref{th:l2-core-intro}. 
While none of these is going to be employed in the proposed construction, understanding them is a good warm-up exercise, so we provide them along with the short proofs.
}
\vspace{-0.2cm}

\begin{proposition}[cf.~{\cite[Prop.~2]{harchaoui2019adaptive}}]
\label{prop:projector}
Any shift-invariant subspace~$X$ of~$\C(\Z)$ of dimension~$s \ge 1$ admits a reproducing filter~$\phi \in \C_m(\Z)$, for arbitrary~$m \ge s-1$, such that
$
\|\phi\|_2^2 \le 2s/(2m+1).
$
\end{proposition}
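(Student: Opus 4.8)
The plan is to exhibit $\phi$ explicitly: take the orthogonal projector onto the restriction of $X$ to a window of length $m+1$, average it along its diagonals, and read $\phi$ off the result. The norm bound then falls out of a one-line Cauchy--Schwarz estimate, while reproduction follows from shift-invariance.

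\textbf{Setting up the finite-dimensional object.} Since $X$ is an $s$-dimensional SIS, its elements satisfy a linear recurrence of order exactly $s$ whose characteristic polynomial has degree $s$ and constant term $1$ (cf.~\cite[Prop.~4.1.2]{ostrovskii2018adaptive}); hence any $x\in X$ is pinned down by $s$ consecutive entries, and since $m+1\ge s$ the restriction map $x\mapsto(x_0,\dots,x_m)$ is injective on $X$. Let $U:=\{(x_0,\dots,x_m):x\in X\}\subseteq\C^{m+1}$ be its image, a subspace of dimension $s$, and let $\Pi\in\C^{(m+1)\times(m+1)}$ be the orthogonal projector onto $U$, so that $\Pi=\Pi^\htop=\Pi^2$ and $\sum_{j,k}|\Pi_{jk}|^2=\tr\Pi=s$. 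I would then define $\phi\in\C_m(\Z)$ by $\phi_\tau:=\frac{1}{m+1}\sum_{\substack{0\le j,k\le m\\ j-k=\tau}}\Pi_{jk}$ for $|\tau|\le m$ and $\phi_\tau:=0$ otherwise; its support lies in $\{-m,\dots,m\}$, as required.

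\textbf{Reproduction.} This is the crux. Because convolution commutes with the shift and $X$ is shift-invariant, it is enough to verify $(\phi*x)_0=x_0$ for every $x\in X$ (applying this to $y=\Delta^h x\in X$ then forces $(\phi*x)_{-h}=x_{-h}$ for all $h\in\Z$). Expanding and substituting $\tau=j-k$, one gets $(\phi*x)_0=\sum_\tau\phi_\tau x_{-\tau}=\frac{1}{m+1}\sum_{j,k=0}^m\Pi_{jk}\,x_{k-j}$. For each fixed $j$, the vector $v^{(j)}:=(x_{-j},x_{1-j},\dots,x_{m-j})$ is exactly the restriction of $\Delta^j x$ to $\{0,\dots,m\}$; since $\Delta^j x\in X$ we have $v^{(j)}\in U$, hence $\Pi v^{(j)}=v^{(j)}$, so $\sum_{k=0}^m\Pi_{jk}x_{k-j}=(\Pi v^{(j)})_j=v^{(j)}_j=x_0$. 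Averaging over the $m+1$ values of $j$ gives $(\phi*x)_0=x_0$.

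\textbf{Norm bound and the main obstacle.} Each diagonal $\{(j,k):0\le j,k\le m,\ j-k=\tau\}$ has $m+1-|\tau|\le m+1$ entries, so Cauchy--Schwarz gives $|\phi_\tau|^2\le\frac{1}{m+1}\sum_{\substack{0\le j,k\le m\\ j-k=\tau}}|\Pi_{jk}|^2$; summing over $\tau$ collapses the right-hand side to $\frac{1}{m+1}\sum_{j,k}|\Pi_{jk}|^2=\frac{s}{m+1}\le\frac{2s}{2m+1}$. The only step that takes an idea rather than bookkeeping is hitting on the construction itself --- diagonal averaging of the projector onto the \emph{one-sided} window $\{0,\dots,m\}$ --- together with the observation that every integer shift of $x$, restricted back to that window, still lands in $U$; that single use of shift-invariance is what makes reproduction hold and simultaneously forces the $s/(m+1)$ scaling. (For contrast, the naive minimal-norm reproducing filter supported on $\{-m,\dots,m\}$ equals $P_V e_m$ with $V=X|_{\{-m,\dots,m\}}$, and only obviously satisfies $\|\phi\|_2^2=(P_V)_{mm}\le 1$.)
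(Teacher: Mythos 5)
Your proof is correct. It is built on the same central object as the paper's argument --- the orthogonal projector $\Pi$ onto the window slice $X_m=\{(x_0,\dots,x_m):x\in X\}$, whose Frobenius norm squared is $\tr\Pi\le s$ --- but the way you extract a filter from $\Pi$ differs. The paper uses a pigeonhole step: some row $w$ of $\Pi$ has $\|w\|_2^2\le s/(m+1)$, that row reproduces $x_{t_0}$ from the window for the (unknown) index $t_0$ of the row, and shift-invariance is then invoked only at the end to translate $t_0$ to an arbitrary $t$. You instead average $\Pi$ along its diagonals to get a single canonical, explicitly defined $\phi$, verify reproduction by noting that every shifted slice $v^{(j)}=(\Delta^j x)|_{\{0,\dots,m\}}$ still lies in $X_m$ (so each row of $\Pi$ applied to its own shifted slice returns $x_0$), and replace the pigeonhole by a per-diagonal Cauchy--Schwarz, which collapses to the same bound $s/(m+1)\le 2s/(2m+1)$. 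What each buys: your construction is deterministic and symmetric (no unknown row index, no time-reversal/shift bookkeeping), at the cost of a slightly longer reproduction check; the paper's is shorter and makes the ``we don't know which row'' feature explicit, which it later uses to explain why this trick cannot produce the one-sided filters of Section~\ref{sec:oracle-causal} --- note that your diagonal-averaged filter is genuinely two-sided as well, so it does not sidestep that limitation either. Your remark that $\dim X_m=s$ (injectivity of slicing) is true but not needed; $\tr\Pi\le s$ suffices.
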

\begin{proof}
Let~$X_m$ be the image of~$X$ under the slicing map~$x \in \C(\Z) \mapsto (x_0, ..., x_m) \in \C^{m+1}$. Clearly,~$X_m$ is a subspace of~$\C^{m+1}$ with~$\dim(X_m) \le s$.
Now, let~$\Pi_{m} \in \C^{(m+1) \times (m+1)}$ be the coordinate matrix of the projector on~$X_m$, in the canonical basis. 
Its squared Frobenius norm is~$\le s$, so~$\Pi_m$ has a row~$w \in \C^{m+1}$ such that~$\|w\|_2^2 \le \frac{s}{m+1} \le \frac{2s}{2m+1}$.
If the position of~$w$ in~$\Pi_m$ is~$t_0 \in \{0,...,m\}$, then~$x_{t_0} = \sum_{\tau = 0}^m w_\tau x_{\tau}$, so by time-reversing, shifting, and zero-padding~$w$~we get~$\phi \in \C_m(\Z)$ such that~$\|\phi\|_2 = \|w\|_2$ and~$x_{t_0} = \sum_{\tau \in [m]_{\pm}} \phi_\tau x_{t_0-\tau}$. 
Finally, the shift-invariance of~$X$ allows to replace~$t_0$ with arbitrary~$t \in \Z$. 
\end{proof}

\begin{proposition}[{\cite[Prop.~3]{harchaoui2015adaptive}}]
\label{prop:convolution}
If~$X \subseteq \C(\Z)$ is reproduced by~$\phi \in  \C_m(\Z)$:~$\| \phi \|_2^2 \le r^2/(2m+1)$,
then~$X$ is reproduced by the filter~$\phi^{2} := \phi * \phi \in \C_{2m}(\Z)$ for which
$
\|\F_{2m} [\phi^{2}]\|_1 \le 2r^2/\sqrt{4m+1}.
$
\end{proposition}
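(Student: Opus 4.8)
The plan is to establish the two assertions of the proposition — that $\phi^2 := \phi \ast \phi$ reproduces $X$, and that $\|\F_{2m}[\phi^2]\|_1 \le 2r^2/\sqrt{4m+1}$ — by a short computation that unwinds the definitions and uses the $z$-transform formalism of Section~\ref{sec:notation}. First I would record the elementary facts. Since $\phi$ reproduces $X$, we have $\phi \ast x = x$ for every $x \in X$, and applying $\phi\ast(\cdot)$ once more gives $\phi \ast \phi \ast x = \phi \ast x = x$; hence $\phi^2$ reproduces $X$. Moreover $\phi \in \C_m(\Z)$ forces the support of $\phi^2 = \phi\ast\phi$ to lie in $\{t : |t| \le 2m\}$, so $\phi^2 \in \C_{2m}(\Z)$ and $\F_{2m}[\phi^2]$ is a well-defined vector in $\C^{4m+1}$ — crucially with no aliasing, since the support window $[-2m,2m]$ contains exactly $4m+1$ points.

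Next I would compute the DFT of $\phi^2$ on the grid of size $4m+1$. Because $\phi^2 \in \C_{2m}(\Z)$, formula~\eqref{def:DFT} gives $(\F_{2m}[\phi^2])_k = (4m+1)^{-1/2}\,\phi^2(\chi_{k,2m}^{-1})$, and since the $z$-transform is multiplicative under convolution, $\phi^2(z) = \phi(z)^2$. Using $\phi \in \C_m(\Z) \subseteq \C_{2m}(\Z)$ and~\eqref{def:DFT} again, $\phi(\chi_{k,2m}^{-1}) = \sqrt{4m+1}\,(\F_{2m}[\phi])_k$, so altogether
\[
(\F_{2m}[\phi^2])_k \;=\; \sqrt{4m+1}\;(\F_{2m}[\phi])_k^2 \qquad \forall k \in \{0,\dots,4m\}.
\]
Summing absolute values and invoking Parseval's identity~\eqref{def:Parseval} (with the observation that $\|\phi\|_{2m,2} = \|\phi\|_2$ since $\phi$ is supported on $[-m,m]$), we get
\[
\|\F_{2m}[\phi^2]\|_1 \;=\; \sqrt{4m+1}\;\|\F_{2m}[\phi]\|_2^2 \;=\; \sqrt{4m+1}\;\|\phi\|_2^2 \;\le\; \frac{\sqrt{4m+1}}{2m+1}\,r^2 .
\]
Finally, since $4m+1 \le 2(2m+1)$ we have $\dfrac{\sqrt{4m+1}}{2m+1} = \dfrac{4m+1}{(2m+1)\sqrt{4m+1}} \le \dfrac{2}{\sqrt{4m+1}}$, which yields the claimed bound $\|\F_{2m}[\phi^2]\|_1 \le 2r^2/\sqrt{4m+1}$.

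I do not expect any genuine obstacle here: the argument is a one-line consequence of the fact that the DFT diagonalizes convolution of sequences when no aliasing occurs, combined with Parseval. The only points that require a little care are (i) checking that the support of $\phi^2$ fits inside $[-2m,2m]$ so that ordinary convolution coincides with cyclic convolution modulo $4m+1$ (otherwise the diagonalization identity would fail), and (ii) keeping track of the $\sqrt{4m+1}$ factors coming from the chosen unitary normalization of $\F_{2m}$. Everything else is the trivial inequality $4m+1 \le 4m+2$.
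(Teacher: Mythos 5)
Your proposal is correct and follows essentially the same route as the paper: the $\ell_1$-norm bound is obtained by evaluating the $z$-transform on the grid $\T_{2m}$ via~\eqref{def:DFT}, applying Parseval, and using $4m+1\le 2(2m+1)$. The only cosmetic difference is in the reproducing step, where you apply $\phi\ast(\cdot)$ twice while the paper factors $1-\phi^2(z)=(1-\phi(z))(1+\phi(z))$ — these are equivalent one-line arguments.
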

\begin{proof}
That~$\phi^2$ is reproducing is due to the fact that~$1 - \phi^2(z) = (1 - \phi(z)) (1 + \phi(z))$. 
On the other hand, the norm can be estimated via Parseval's identity and the fact that~$\phi \in \C_m(\Z)$:
\begin{align*}
\|\F_{2m} [\phi^{2}]\|_1 \sqrt{4m+1}
\stackrel{\eqref{def:DFT}}{=} 
    \sum_{z \in \T_{2m}} \left| \phi^2(z) \right| 
\stackrel{\eqref{def:DFT}}{=} 
	(4m+1) \|\F_{2m} [\phi]\|_2^2 
=  (4m+1) \|\phi\|_{2}^2 
\le 
   2r^2.
\tag*{\qedhere}
\end{align*}
\end{proof}
%
\begin{remark} 
{\em Proposition~\ref{prop:convolution} can be generalized to repeated autoconvolution~$\phi^k$. 
While such a result is not needed to prove Proposition~\ref{prop:hybrid-oracle}, we give it in Appendix~\ref{apx:convolution-powers} for completeness.}
\end{remark}
%
%
%
%
%
We shall now construct the reproducing filter whose existence is claimed in Proposition~\ref{prop:hybrid-oracle}. 

\vspace{-0.2cm}
\odima{
\subsection{Proof of Proposition~\ref{prop:hybrid-oracle}: construction of the reproducing filter}
For~$z \in \C$, let~$v_n(z) = [1\; z\; \cdots\; z^{n}]^\top \in \C^{n+1}$ be the associated moment vector. 
Moreover, let~$\smash{\Pi_n \in \C^{(n+1) \times (n+1)}}$ be the projector on~$\smash{X_n = \textup{span}\{v_n(w_1), \dots, v_n(w_s)\}}$, i.e. the ``slice'' of~$X$ (cf. the matrix~$\Pi_m$ in the proof of Proposition~\ref{prop:projector}).
Consider the following Laurent polynomial:\,\footnote{\odima{According to F.~Nazarov (private communication), this proof is ``from The Book and in the spirit of T.~Erdélyi.''}}
\begin{equation}
\label{eq:christoffel}
\vphi^X(z) = \frac{1}{n+1} v_n(z^{-1})^\top \Pi_n v_n(z).
\end{equation}
Clearly,~$\vphi^X \in \C_n(\Z)$ as a linear combination of the monomials~$1, z, \dots, z^n$ and their reciprocals. 
Furthermore, for~$z = w_k$ with~$k \in [s]$, we have that~$v_n(z)$ belongs to~$X_n$, which guarantees that
\[
\vphi^X(w_k) = \frac{1}{n+1} v_n((w_k)^{-1})^\top v_n(w_k) = 1.
\]
In other words,~$1-\vphi(z)$ vanishes at the reciprocal roots~$w_k = z_k^{-1}$ of the characteristic polynomial of~$X$. 
Here, we did {\em not} use that~$\smash{\frac{1}{\sqrt{n+1}}v(z)}$ is a unit vector (which holds only on~$\T$); instead, we only used the algebraic structure of moment vectors. 
Now, for any~$z \in \T$ we have~$z^{-1} = \bar z$, thus
\[
\vphi^X(z) = \frac{1}{n+1} v_n(z)^\htop \Pi_n v_n(z) = \frac{1}{n+1} \|\Pi_n v_n(z)\|_2^2 \quad \forall z \in \T.
\]
This implies that~$\vphi^X(z) \in [0,1]$ for any~$z \in \T$. Finally, to establish~\eqref{eq:integral-bound} we use the trace trick:
\[
\begin{aligned}
\frac{1}{2\pi} \int_{-\pi}^\pi \vphi^X(e^{i\theta}) d\theta 
&= \frac{1}{n+1} \int_{-\pi}^\pi \tr \left(\Pi_n v_n(e^{i\theta}) v_n(e^{i\theta})^\htop \right) d\theta \\
&= \frac{1}{n+1} \tr \left(\Pi_n \int_{-\pi}^\pi  v_n(e^{i\theta}) v_n(e^{i\theta})^\htop d\theta \right) 
= \frac{1}{n+1} \tr \left(\Pi_n \right) 
= \frac{s}{n+1}.
\end{aligned}
\]
We used that~$\int_{-\pi}^{\pi} e^{ik\theta} d\theta= 0$ for~$k \in \Z \setminus \{0\}$. (Note that any distinct~$w_1, \dots, w_s$ deliver equality.)
Finally, since~$\vphi^X = |\vphi^X|$ on~$\T$ and due to the cyclic property of the roots of unity, it holds that
\[
\|\F_n[\vphi^X]\|_1 
= \frac{1}{\sqrt{2n+1}} \sum_{z \in \T_n} \vphi^X(z) 
= \sqrt{2n+1} \vphi^X(1)
= \frac{\sqrt{2n+1}}{2\pi} \int_{-\pi}^\pi \vphi^X(e^{i\theta}) d\theta
\le \frac{s\sqrt{2n+1}}{n+1}.
\]
This verifies~\eqref{eq:oracle-norms} for~$p = 1$, whence the case of any~$p \in (1,\infty)$ follows by H\"older's inequality.
\qed
\begin{remark}
{\em 
Note that the reproducing filter in~\eqref{eq:christoffel} can be interpreted as the average of those obtained from the rows of~$\Pi_n$, in contrast to the one row used in Proposition~\ref{prop:projector}. 
}
\end{remark}
We note that in the theory of  orthogonal polynomials, the construction in~\eqref{eq:christoffel} is known as the Christoffel function. Recently, it found numerous applications in statistical theory and optimization~\cite{lasserre2024christoffel}. Our interpretation and application of the Christoffel function seem to be new.
}
\vspace{-0.2cm}
\subsection{One-sided reproducing filters for quasi-stable SIS}
\label{sec:oracle-causal}

We now establish an analogue of Proposition~\ref{prop:hybrid-oracle} under the constraint that the filter is {one-sided}.\vspace{-0.3cm}
%
\paragraph{Additional notation.}
Let~$\C_n^+(\Z)$ be the set of sequences supported within~$[n]_+ := \{0, ..., n\}$, that is~$\vphi \in \C_n^{+}(\Z)$ if and only if~$\vphi$ is a polynomial with~$\deg(\vphi)\le n$.
Note that~$u \in \C_{2n}^+(\Z)$ if and only if~$\Delta^{-n}u \in \C_{n}(\Z)$. 
We succintly define the one-sided DFT operator~$\F_{2n}^+: \C(\Z) \to \C^{2n+1}$,
\[
\F_{2n}^{+}[u] := \F_n^{\hphantom+} [\Delta^{-n} u].
\]
%

It is well-known that for~$\X(w_1,.., w_s)$ to admit a sequence of reproducing filters~$\vphi \in \C_n^+(\Z)$ whose norms are vanishing as~$n \to \infty$, one must have quasi-stability, i.e.~$w_j \in \D$~$\forall j \in \{1, ..., s\}$ where $\D$ is the closed unit disk. (This is clear for~$s = 1$: if $|w| > 1$ and~$\phi = (\phi_1, ..., \phi_n) \in \C^n$ is such that~$\sum_{k = 1}^n \bar \phi_k w^{-k} = 1$, then~$\| \phi \|_2 \ge \sqrt{1-|w|^{-2}}\sum_{k=1}^n \bar \phi_k w^{-k} \ge \sqrt{1-|w|^{-2}} > 0$ for all~$n$.)
As such, in the context of one-sided reproducing filters, we require that SIS is a quasi-stable one.

\begin{proposition}
\label{prop:hybrid-oracle-causal}
For some constants~$c_0,c_1 > 0$, the following holds. 
For all~$n \ge c_0 s^2 \log(2s)$, any SIS~$\X(w_1, ..., w_s)$ with~$w_1, ..., w_s \in \D$ admits a reproducing filter~$\vphi^X_+ \in \C_{2n}^+(\Z)$ such that
\begin{align}
\label{eq:oracle-norms-causal}
\|\F_{2n}^+[\vphi^X_+]\|_p \le \frac{c_1(s^2 \log(en))^{1/p}}{\sqrt{2n+1}} \quad \forall p \in [1,\infty].
\end{align}
\end{proposition}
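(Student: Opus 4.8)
The goal is a one-sided analogue of Proposition~\ref{prop:hybrid-oracle}, so the natural strategy is to mimic the two-sided construction but replace the building blocks by their causal counterparts. The plan is to first obtain a causal analogue of Proposition~\ref{prop:projector}: a reproducing filter~$\phi_+ \in \C_{m}^+(\Z)$ for a quasi-stable SIS whose squared~$\ell_2$-norm is small, but now only~$O(s^2 \log(s)/m)$ rather than~$O(s/m)$ --- this degradation is exactly the price of causality and is, by the appendix lower bound, unavoidable. I would get such a~$\phi_+$ by a least-squares / projection argument on the half-line combined with the stability of the roots: write~$\X(w_1,\dots,w_s)$, restrict to coordinates~$\{0,\dots,m\}$, and use that a quasi-stable signal on a long enough window is well-conditioned, so the coordinate projector onto the (at most~$s$-dimensional) restricted subspace has a row of the correct squared norm. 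The factor~$s^2\log(s)$ enters through the condition number of the confluent Vandermonde-type system on the half-line when roots are allowed on~$\T$; here the premise~$m \gasymp s^2 \log(2s)$ is precisely what makes the argument go through.

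\textbf{Key steps.} Second, I would port Proposition~\ref{prop:convolution} to the causal setting: if~$\phi_+ \in \C_m^+(\Z)$ reproduces~$X$ with~$\|\phi_+\|_2^2 \le r^2/(2m+1)$, then~$\phi_+^2 := \phi_+ * \phi_+ \in \C_{2m}^+(\Z)$ reproduces~$X$ as well (again because~$1 - \phi_+^2(z) = (1-\phi_+(z))(1+\phi_+(z))$ and the right factor is a multiple of the characteristic polynomial), and the same Parseval computation as in the proof of Proposition~\ref{prop:convolution} gives~$\|\F^+_{2n}[\phi_+^2]\|_1 \lasymp r^2/\sqrt{2n+1}$ with~$r^2 \asymp s^2\log(s)$ here. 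Third, I would run the interpolation-correction idea verbatim: define the approximate support~$\S_n(\phi_+)$ on the grid~$\T_n$, let~$\rho_+^X$ be the minimal sup-norm trigonometric polynomial in~$\C_{5m}^+(\Z)$ (i.e.~an ordinary polynomial of degree~$5m$) interpolating~$1/\phi_+^2$ on~$\S_n(\phi_+)$, and set~$\vphi_+^X(z) := \phi_+^2(z) + \rho_+^X(z)(\phi_+^2(z) - \phi_+^4(z))$; the support bookkeeping gives~$\vphi_+^X \in \C_{9m}^+(\Z) = \C_{2n}^+(\Z)$ for~$2n = 9m$ (with a harmless index shift to match the~$\F_{2n}^+$ convention), and the factorization~$1 - \vphi_+^X(z) = (1-\phi_+^2(z))(1 - \rho_+^X(z)\phi_+^2(z))$ shows it reproduces~$X$. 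Fourth, I would re-derive Lemmas~\ref{lem:oracle-norm-bounds}--\ref{lem:interpolant-norm-bound} almost word for word: the norm bounds reduce everything to~$\E_{m,n}(\phi_+)+1$ times~$\|\F^+_{n}[\phi_+^2]\|_1 \lasymp s^2\log(s)/\sqrt{2n+1}$, and the interpolation lemma uses the Fej\'er kernel to bound~$\E_{m,9m}(\phi_+)$ by a universal constant (the Fej\'er interpolant~$\hat\rho_+(z) = \sum_{w \in \S_n(\phi_+)} \phi_+^{-2}(w)\,\Fej_{5m}(z/w)/(5m+1)$ is feasible and has sup-norm bounded via Lemma~\ref{lem:oversampling}). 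Finally, the H\"older interpolation trick~\eqref{eq:Hoelder-trick} upgrades the~$p\in\{1,\infty\}$ bounds to all~$p\in[1,\infty]$, yielding~\eqref{eq:oracle-norms-causal} with~$\ell_1$-norm~$\lasymp s^2\log(en)$ and~$\ell_\infty$-norm~$\lasymp 1$.

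\textbf{Main obstacle.} The delicate part is the first step --- establishing the causal~$\ell_2$-reproducing filter with the~$s^2\log(s)$ bound. Unlike the two-sided case, where the coordinate projector argument is essentially free, on the half-line one must quantify how badly conditioned the system becomes when characteristic roots sit on (or near) the unit circle: the relevant object is the minimal~$\ell_2$-norm of a degree-$m$ polynomial~$\phi_+$ with~$\phi_+(w_k^{-1}) = $ (prescribed derivatives) for confluent nodes, and the sharp scaling~$m \gasymp s^2\log s$ is exactly the threshold at which this norm is~$O(s^2\log s/m)$. I expect this to require a careful analysis --- either via an explicit construction (e.g.~a product of Fej\'er-type or de la Vall\'ee Poussin-type one-sided kernels adapted to the~$s$ roots) or via a duality/extremal-problem argument bounding the norm of the inverse of the confluent Vandermonde block --- and this is presumably where the appendix's lower bound for one-sided reproducing filters is used to certify that the~$s^2$ (resp.~$s$) scaling of~$\sR_1$ (resp.~$\sR_2$) cannot be avoided. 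Everything downstream (auto-convolution, interpolation correction, Fej\'er-kernel sup-norm bound, H\"older upgrade) is a routine transcription of the two-sided proof.
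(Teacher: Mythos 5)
Your downstream construction is exactly the paper's: square a causal $\ell_2$-reproducing filter to get the $\ell_1$ bound on the grid, correct it by a minimal sup-norm \emph{causal} interpolant of $1/\phi_+^2$ on $\S_n(\phi_+)$, bound that interpolant by a universal constant via a Fej\'er-kernel argument, and upgrade to all $p$ by H\"older. The genuine gap is your first step. The paper does not prove the causal $\ell_2$-oracle at all: it imports it as Proposition~\ref{prop:causal-l2-oracle} from~\cite[Prop.~4.5]{harchaoui2019adaptive} ($\|\phi_+\|_2^2 \lasymp s^2\log(em)/(2m+1)$ for $m \gasymp s^2\log(es)$, with $\phi_+ \in \C_{2m}^+(\Z)$), and it explicitly flags that the projector/pigeonhole route of Proposition~\ref{prop:projector} \emph{cannot} be adapted to the one-sided setting --- precisely because pigeonhole only tells you that \emph{some} row of $\Pi_m$ has squared norm $O(s/m)$, whereas causality forces you to use one specific row (the one expressing $x_{t_0}$ through samples on one side), and controlling that row is the hard part; the known proof is complex-analytic. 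Your sketch ``the coordinate projector onto the restricted subspace has a row of the correct squared norm'' therefore does not go through, your claimed bound $\|\phi_+\|_2^2 = O(s^2\log(s)/m)$ is asserted rather than derived (the available bound has $\log(em)$, which still suffices for~\eqref{eq:oracle-norms-causal} since the target carries $\log(en)$), and the alternative directions you mention (confluent Vandermonde conditioning, one-sided de la Vall\'ee Poussin products) are not carried out. Either cite the prior result or supply that argument; without it the proposal does not stand on its own, and Appendix~\ref{apx:one-sided} only certifies a \emph{lower} bound, it does not produce the filter.

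Two smaller, fixable points. First, the degree bookkeeping: with $\phi_+ \in \C_{2m}^+(\Z)$ one gets $\phi_+^4 \in \C_{8m}^+(\Z)$, $\rho_+^X \in \C_{10m}^+(\Z)$ and hence $\vphi_+^X \in \C_{18m}^+(\Z) = \C_{2n}^+(\Z)$ with $n = 9m$, matching the statement; your ``$\vphi_+^X \in \C_{9m}^+(\Z)$ with $2n = 9m$'' contradicts the premise $n=9m$, and shrinking the interpolant to degree $5m$ also worsens the oversampling constant in Lemma~\ref{lem:oversampling}, which matters because~\eqref{eq:oracle-norms-causal} carries the explicit constant $\cst$. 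Second, as written your Fej\'er interpolant uses the two-sided kernel $\Fej_{5m}$, which is not a causal polynomial; feasibility in the one-sided problem requires the shifted kernel $\Fej_{10m}^+(z) = z^{5m}\Fej_{5m}(z)$ (same modulus on $\T$, same value at $1$), which is exactly how the paper's Lemma~\ref{lem:interpolant-norm-bound-causal} proceeds. (Also, in your causal squaring step it is the factor $1-\phi_+(z)$, not $1+\phi_+(z)$, that carries the characteristic polynomial --- a phrasing slip only.)
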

Compared to the situation of Proposition~\ref{prop:hybrid-oracle}, the scaling factor~$\sR_p$ for the~$\ell_p$-norm degrades from~$s^{1/p}$ to~$s^{2/p}$ times a logarithmic factor.
This deterioration is due to the fact that we cannot use the filter~$\phi$ from Proposition~\ref{prop:projector} anymore, since this filter is two-sided by construction. (Which, in turn, is because in the pigeonhole trick employed to prove Proposition~\ref{prop:projector}, one does not know {\em which} row of the projector~$\Pi_m$ has minimal norm.)
Instead, Proposition~\ref{prop:hybrid-oracle-causal} is immediately implied by the following, more delicate result (proved by complex-analytic methods).
\vspace{-0.4cm}
\odima{
\begin{proposition}[{\cite[Prop.~4.5]{harchaoui2019adaptive}}]
\label{prop:causal-l2-oracle}
For~$m \gasymp s^2 \log(es)$, any SIS~$X(w_1,...,w_s)$ with~$w_1, ..., w_s \in \D$ admits a reproducing filter~$\phi_+ \in \C_{2m}^+(\Z)$ such that
\[
\|\phi_+\|_2^2 \lasymp \frac{40 (s+2)^2 \log(8ms)}{2m+1} 
\quad \text{and} \quad
\max_{z \in \T}|\phi_+(z)| \le 2.25.
\]
\end{proposition}
\begin{remark}
{\em 
In~\cite{harchaoui2019adaptive}, this result was stated for generalized harmonic oscillations, i.e.~$X(w_1, ..., w_s)$ with~$w_1, ..., w_s \in \T$, but the proof admits the quasi-stable case~$w_1, ..., w_s \in \D$.
(The reason~\cite{harchaoui2019adaptive} gave a restricted result is that in the context of subsequent statistical applications, they require~$X(w_1, ..., w_s)$ to be reproduced both by the ``past-looking" one-sided filter~$\vphi(\Delta) \in \C_n^+(\Z)$ and its ``future-looking" counterpart~$\psi(\Delta) = \vphi(\Delta^{-1})$.)
In addition, the bound for $\max_{z \in \T}|\phi_+(z)|$, while not stated explicitly in~\cite[Prop.~4.5]{harchaoui2019adaptive}, immediately follows from equations~(80)--(83) therein.
}
\end{remark}
}
\noindent
Proposition~\ref{prop:hybrid-oracle-causal} follows from Proposition~\ref{prop:causal-l2-oracle} by taking~$\vphi_+^{X}(z) = \phi_+^2(z)$ and adjusting the constants.
In contrast to Proposition~\ref{prop:projector}, the proof of Proposition~\ref{prop:causal-l2-oracle} is technical and relies on complex analysis.
Moreover, in Appendix~\ref{apx:one-sided} we show that, in the asymptoric regime~$m \to \infty$, this result is optimal up to a logarithmic factor.
Specifically, we show that~$m\| \phi_{+}^{*,m} \|_2^2 \to s^2$ for any sequence of~$\phi_{+}^{*,m} \in \C_m^+(\Z)$ of minimal-norm reproducing filters on the subspace of~$s$-degree polynomials.

\section{Statistical guarantees}
\label{sec:l2-bound}

\paragraph{Roadmap.}
In this section, we carry out the statistical ``program" announced in Section~\ref{sec:intro}. Namely, in Section~\ref{sec:l2-core}, we formulate and discuss the unabridged version of Theorem~\ref{th:l2-core-intro}, i.e.~a guarantee on the MSE of estimating the signal ``slice''~$(x_t, t \in [n]_{\pm})$ with an estimate of the form~$\wh x = \wh \vphi * y$, where the data-driven~$\wh \vphi \in \C_n(\Z)$ is built from the observations~$(y_t, t \in [2n]_{\pm})$. In Section~\ref{sec:l2-causal}, we make a brief detour to the ``one-sided'' setting, constructing an estimate~$\wh x = \wh\vphi * y$ that uses a {\em causal} data-driven filter~$\wh \vphi \in \C_{2n}^+(\Z)$; in a sense, Section~\ref{sec:l2-causal} stands in the same position towards Section~\ref{sec:l2-core} as Section~\ref{sec:oracle-causal} towards \odima{the preceding part of Section~\ref{sec:oracle}.}  In Section~\ref{sec:l2-full}, we revisit the ``two-sided'' guarantee of Section~\ref{sec:l2-core} and use it, together with a multiscale procedure, to build an estimate of~$(x_t, t \in [2n]_{\pm})$. 
Finally, in Section~\ref{sec:detection}, we focus on the associated {\em signal detection} problem, where the goal is to detect the presence of~$x \in \XX_s$ in the noise. 
To streamline the exposition, we postpone the proofs of all results to be presented in this section to Section~\ref{sec:stat-proofs}.

\subsection{Estimation ``on the core''}
\label{sec:l2-core}


Recalling \odima{Theorem~\ref{th:l2con-simp}}, consider the estimate~$\wh x_t = (\wh \vphi * y)_t$ with the following data-driven filter:
\begin{equation}
\label{opt:l2con-core}
\wh\vphi \in \Argmin_{\vphi\in \C_{n}(\Z)}
\left\{ 
\|\vphi*y - y\|_{n,2}^2:\;\;
\max\{\|\F_{n}[\vphi]\|_{1},  \odima{2s}\|\F_{n}[\vphi]\|_{\infty} \} \le \frac{\odima{2s}}{\sqrt{2n+1}}
\right\}.
\end{equation}
In plain words,~\eqref{opt:l2con-core} corresponds to minimizing the~$\ell_2$-norm residual of a convolution-type estimate~$\vphi * y$ under the~$\ell_1$- and~$\ell_\infty$-norm constraints on the filter DFT matching the bounds~\eqref{eq:oracle-norms} for the reproducing filter (cf.~Proposition~\ref{prop:hybrid-oracle}).
Same as in the proof of Proposition~\ref{prop:hybrid-oracle}, it suffices to impose only the constraints on the~$\ell_1$- and~$\ell_\infty$-norm; any other~$\ell_p$-norm can then be controlled via H\"older's inequality, and in our statistical analysis we  use the bound on the~$\ell_2$-norm as well.


\begin{remark}
\label{rem:l2con-computation}
{\em Observe that~\eqref{opt:l2con-core} is a well-structured convex program that can be efficiently solved by adapting first-order methods developed in~\cite{ostrovskii2018efficient} for the purpose of solving~\eqref{opt:l2con-simp}; in particular, the gradient oracle can be implemented via Fast Fourier Transform. 
In order to do so, one has to implement the proximal mapping for the complex version of the polytopal ball~\cite{deza2021polytopal}, i.e.~the intersection of a complex~$\ell_1$-ball and complex~$\ell_\infty$-ball. 
This can be done, e.g., by running a variant of the alternating projections method~\cite{bauschke1993convergence,bauschke1994dykstra,bauschke1996projection,grigoriadis2000alternating,lewis2008alternating} or extending the algorithm of~\cite{yu2012efficient} for efficient projection onto the intersection of norm balls to the case of a complex space.}
\end{remark}

Next we characterize the mean-squared error of the estimate in~\eqref{opt:l2con-core} on the ``core'' interval.
\begin{theorem}
\label{th:l2con-core}
In the assumptions of Proposition~\ref{prop:hybrid-oracle}, an optimal solution~$\hat\vphi \in \C_n(\Z)$ to~\eqref{opt:l2con-core} satisfies, for arbitrary~$\delta \in (0,1)$, the following inequality with probability at least $1-\delta$: 
\begin{equation}
\label{eq:l2-core}
\left\|\wh\vphi * y - x\right\|_{n,2}^2 
\le 
\odima{36}\,\sigma^2 \left( \odima{s}\log(2n+1) + \odima{2s} + \log(\delta^{-1}) \right) \log^2(e^4 s).
\end{equation}
\end{theorem}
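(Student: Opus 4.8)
\textbf{Proof strategy for Theorem~\ref{th:l2con-core}.}
The plan is to run the standard ``basic inequality'' argument for constrained least squares, exploiting that the reproducing filter~$\vphi^X$ from Proposition~\ref{prop:hybrid-oracle} is feasible in~\eqref{opt:l2con-core}. Write~$y = x + \sigma\xi$ on~$[2n]_\pm$, and let~$K := \{\vphi \in \C_n(\Z): \max\{\|\F_n[\vphi]\|_1, 18s\|\F_n[\vphi]\|_\infty\} \le 18\cst s/\sqrt{2n+1}\}$ be the feasible set, so that~$\vphi^X \in K$. Since~$x$ lies in the SIS~$X$ reproduced by~$\vphi^X$, we have~$\vphi^X * x = x$, hence~$(\vphi^X * y - y)_t = \sigma(\vphi^X*\xi - \xi)_t$ for~$|t| \le n$. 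Optimality of~$\wh\vphi$ gives~$\|\wh\vphi * y - y\|_{n,2}^2 \le \|\vphi^X*y - y\|_{n,2}^2$. Substituting~$y = x + \sigma\xi$ on both sides and expanding, with~$r := \wh x - x = \wh\vphi*y - x$ the estimation residual on~$[n]_\pm$, one obtains after cancellation the basic inequality
\begin{equation*}
\|r\|_{n,2}^2 \le 2\sigma\,\mathsf{Re}\langle \xi, \wh\vphi*\xi - \xi\rangle_n - 2\sigma\,\mathsf{Re}\langle \xi, (\wh\vphi - \vphi^X)*x\rangle_n + \sigma^2\|\vphi^X*\xi - \xi\|_{n,2}^2 + (\text{lower-order cross terms}),
\end{equation*}
and the task reduces to controlling two stochastic terms uniformly over~$\wh\vphi \in K$ plus the ``oracle'' term~$\sigma^2\|\vphi^X*\xi - \xi\|_{n,2}^2$.

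\textbf{Key steps.} First I would pass to the Fourier domain via Parseval~\eqref{def:Parseval}: all seminorms~$\|\cdot\|_{n,2}$ and forms~$\langle\cdot,\cdot\rangle_n$ become honest~$\ell_2$ quantities on~$\C^{2n+1}$, and convolution with~$\vphi$ followed by slicing to~$[n]_\pm$ acts on~$\F_n[\xi]$ in a way controlled by the diagonal multiplier~$\sqrt{2n+1}\,\F_n[\vphi]$ — this is where the constraints~$\|\F_n[\vphi]\|_1 \lesssim s$ and~$\|\F_n[\vphi]\|_\infty \lesssim 1$ enter. The oracle term~$\sigma^2\|\vphi^X*\xi - \xi\|_{n,2}^2$ is a quadratic form in Gaussians with operator norm~$\lesssim (1+\|\F_n[\vphi^X]\|_\infty)^2 \asymp 1$ and trace/Frobenius norm governed by~$\|\F_n[\vphi^X]\|_2^2 \asymp s$, so Hanson--Wright (or a direct~$\chi^2$ tail) gives~$\sigma^2(s + \log(\delta^{-1}))$ with high probability — this is the term where decoupling~$\log(\delta^{-1})$ from~$s$ happens, precisely because~$\sR_\infty \asymp 1$. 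For the term~$\mathsf{Re}\langle \xi, \wh\vphi*\xi - \xi\rangle_n$: the~$\langle\xi,\xi\rangle_n$ part is a clean~$\chi^2$; the bilinear part~$\langle\xi, \wh\vphi*\xi\rangle_n$ requires a uniform (over~$K$) bound, obtained by writing it as~$\sum_k \overline{a_k}\, m_k\, a_k$ with~$a = \F_n[\xi]$ and~$|m_k| \le \sqrt{2n+1}\|\F_n[\wh\vphi]\|_\infty \le 18\cst s$ — but more efficiently one bounds~$|\langle\xi,\wh\vphi*\xi\rangle_n| \le \|\F_n[\wh\vphi]\|_\infty \|\F_n[\xi]\|_\infty \cdot \|\F_n[\xi]\|_1 \cdot\sqrt{2n+1}$ or, better, via~$\|\F_n[\wh\vphi]\|_1$ paired with~$\|\F_n[\xi]\|_\infty^2$, giving something like~$s\|\F_n[\xi]\|_\infty^2$ and hence~$\lesssim s\log(n)\cdot\sigma$-type contributions after the Gaussian maximal inequality~$\|\F_n[\xi]\|_\infty^2 \lesssim \log(en) + \log(\delta^{-1})$ w.h.p. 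For the term~$\mathsf{Re}\langle\xi, (\wh\vphi - \vphi^X)*x\rangle_n$: since both filters are reproducing on~$X$, one has~$(\wh\vphi - \vphi^X)*x = \wh\vphi*x - x = (\wh\vphi*y - y) - \sigma(\wh\vphi*\xi - \xi)$, i.e.~it is expressible through the residual and noise terms already under control — alternatively, bound it by~$\|\F_n[\xi]\|_\infty$ times~$\|\F_n[(\wh\vphi-\vphi^X)*x]\|_1$, and control the latter by noting the multiplier~$1 - \F_n[\wh\vphi]$ is small on the approximate support and~$x$'s spectrum is concentrated. Finally one collects terms, applies the quadratic-inequality trick~$\|r\|_{n,2}^2 \le A\|r\|_{n,2} + B \Rightarrow \|r\|_{n,2}^2 \lesssim A^2 + B$, and tracks the explicit constants to land at~\eqref{eq:l2-core}; the~$\log^2(es)$ arises from a union bound / peeling over the polytopal structure of~$K$ (an~$\ell_1 \cap \ell_\infty$ ball has metric entropy scaling like~$s\log(es)$ per coordinate block, contributing the square of a logarithm in the chaining bound).

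\textbf{Main obstacle.} The delicate point is the uniform-in-$\wh\vphi$ control of the bilinear noise term~$\sup_{\vphi \in K}\mathsf{Re}\langle\xi, \vphi*\xi - \vphi^X*\xi\rangle_n$ (equivalently, of the empirical process~$\vphi \mapsto \langle\xi, (\vphi - \vphi^X)*(x + \sigma\xi)\rangle_n$ over the polytopal ball~$K$), because a naive bound using only~$\|\F_n[\vphi]\|_\infty \lesssim 1$ loses the sparsity, while using only~$\|\F_n[\vphi]\|_1 \lesssim s$ reintroduces an~$s\log(\delta^{-1})$ coupling — the whole point of the~$\ell_1\cap\ell_\infty$ constraint is that it lets one interpolate, via Hölder with a carefully chosen exponent, between these two regimes so that~$s$ multiplies only~$\log(n)$ (a deterministic grid size) and~$\log(\delta^{-1})$ multiplies only the~$O(1)$ factor~$\sR_\infty^2$. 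Making this interpolation rigorous with the right powers of~$\log(es)$ — rather than a spurious extra factor of~$s$ or of~$\log(\delta^{-1})$ — while keeping the numerical constant of order~$80\cst^2$, is where the real work lies; I expect the argument to proceed by splitting the DFT grid into the approximate support~$\S_n(\phi)$ (of size~$\lesssim s$, where one can afford an~$\ell_1$-type bound) and its complement (where~$|\F_n[\vphi^X]|$ and the relevant multipliers are~$O(1)$, so an~$\ell_\infty/\ell_2$ bound with the sub-Gaussian tail of~$\|\F_n[\xi]\|_\infty^2$ suffices), mirroring the support/complement dichotomy already used in the proof of Proposition~\ref{prop:hybrid-oracle}.
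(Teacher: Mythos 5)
Your setup (basic inequality from optimality of $\wh\vphi$, expansion of $y=x+\sigma\xi$, splitting into an oracle quadratic term, a bilinear noise term, and a signal-dependent cross term) coincides with the paper's, but your treatment of the signal-dependent term $\Re\langle\xi,\wh\vphi*x-x\rangle_n$ has a genuine gap. Re-expressing $\wh\vphi*x-x=(\wh\vphi*y-y)-\sigma(\wh\vphi*\xi-\xi)$ does not make it ``already under control'': pairing $\xi$ with $\wh\vphi*y-y$ by Cauchy--Schwarz costs a factor $\|\xi\|_{n,2}\asymp\sqrt{n}$ and produces an $O(\sigma^2 n)$ contribution, and the alternative bound via ``$1-\F_n[\wh\vphi]$ is small on the approximate support and $x$'s spectrum is concentrated'' fails: nothing in \eqref{opt:l2con-core} forces $\F_n[\wh\vphi]$ to be close to $1$ anywhere, the DFT of an off-grid $x\in\XX_s$ is not concentrated, and any bound proportional to $\|x\|_{n,2}$ cannot yield \eqref{eq:l2-core}, which is independent of the signal magnitude. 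The paper's resolution is the observation that $x-\wh\vphi*x$ lies in the \emph{fixed} $s$-dimensional subspace $X$ by shift-invariance, even though $\wh\vphi$ is random; writing $T_n$ for truncation to $[n]_\pm$ and $\Pi_n$ for the projector onto the sliced subspace $X_n=T_n(X)$, one has $\langle\xi,x-\wh\vphi*x\rangle_n=\langle\Pi_n T_n\xi,\,T_n[x-\wh\vphi*x]\rangle$, so Cauchy--Schwarz costs only $\|\Pi_n T_n\xi\|_2^2\sim\chi^2_{2\dim(X_n)}$, i.e.\ $s+\log(\delta^{-1})$ with the confidence term decoupled, while $\|x-\wh\vphi*x\|_{n,2}$ is absorbed into the left-hand side. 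This projection step is the mechanism that decouples $\log(\delta^{-1})$ from $s$ in the signal-dependent part, and it is absent from your argument.

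The second load-bearing ingredient is also left unresolved. You correctly diagnose that bounding $\Re\langle\xi,\wh\vphi*\xi\rangle_n$ and $\|\wh\vphi*\xi\|_{n,2}^2$ through $\|\F_n[\wh\vphi]\|_1$ alone re-couples $s$ with $\log(\delta^{-1})$, but the ``H\"older interpolation / support--complement splitting / chaining'' you invoke is precisely the part you do not carry out; moreover your claim that convolution followed by slicing to $[n]_\pm$ acts on $\F_n[\xi]$ through the diagonal multiplier $\sqrt{2n+1}\,\F_n[\vphi]$ is incorrect, since the convolution is not circular there. The paper proceeds differently: by Krein--Milman the two suprema over the feasible set are attained at extreme points, which by Proposition~\ref{prop:cap-comp} are $18s$-sparse flat-spectrum filters; the phases are discretized at a $\sqrt2$ loss and a union bound is taken over the $\binom{2n+1}{18s}$ supports, which is exactly the source of the $s\log(2n+1)$ term in \eqref{eq:l2-core}; non-circularity is handled by embedding $\Filt_{n,n}(\vphi)$ into the circulant $\Circ_{n,n}(\vphi)$ of size $4n+1$, and the operator norms of the resulting quadratic forms are bounded by $O(\log^2(es))$ via the Dirichlet-kernel oversampling estimates (Lemmas~\ref{lem:oversampling-linf} and~\ref{lem:oversampling-linf-sparse}), so that in the deviation bound \eqref{eq:quad-form-indef} the factor $\log(\delta^{-1})$ multiplies only $\log^2(es)$. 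Without these steps neither the structure $\big(s\log(2n+1)+\log(\delta^{-1})\big)\log^2(9e^4s)$ nor the explicit constant in \eqref{eq:l2-core} can be reached.
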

\paragraph{Discussion.}
The result we have just stated is near-optimal:
as mentioned in the discussion following Theorem~\ref{th:l2con-simp}, neglecting the~$\log^2(s)$ factors, the right-hand side of~\eqref{eq:l2-core} matches the lower bound attained on the class of {\em periodic} harmonic oscillations, i.e.~signals with {\em sparse} DFT. 
This observation can be interpreted as the statistical counterpart of Remark~\ref{rem:diagonal-case}: the subclass~$\XX(\T_n^{(s)})$ is nearly worst-case in~$\XX_s$, from the statistical viewpoint as well.

Let us now make additional, more technical remarks regarding Theorem~\ref{th:l2con-core} and its proof.

Firstly, the leading term~$s \log(2n+1)$ in~\eqref{eq:l2-core} appears as the result of bounding the suprema of two stochastic processes with subexponential tails that arise in our error decomposition, namely~$\sup_{\vphi \in \Phi}\langle \xi, \vphi * \xi \rangle_n$ and~$\sup_{\vphi \in \Phi} \|\vphi * \xi \|_{n,2}^2$, where~$\Phi$ is the feasible set in~\eqref{opt:l2con-core}. 
Our control of these suprema, cf.~\eqref{eq:detection-II-term}--\eqref{eq:rand-convo-term-bound} and \eqref{eq:krein-milman-quad}--\eqref{eq:l2con-convo-quad-bound} respectively, relies on the Krein-Milman theorem and union bound over the extreme points of the set~$\{u \in \C^{2n+1}: \max\{\|u\|_{1}, s\|u\|_{\infty} \} \le s\}$, which are~$s$-sparse binary vectors with arbitrarily shifted phases. 
Factoring out the phases leaves us with~$N_s = {2n+1 \choose s}$ extreme points, so the union bound results in~$\log{N_s} \asymp s \log(2n+1)$ overhead.

Secondly, using Proposition~\ref{prop:hybrid-oracle} with Theorem~\ref{th:l2con-simp} also give the guarantee
$
O(\frac{s\sigma^2}{2n+1}\log(en\delta^{-1}))
$
for the~$\ell_1$-constrained estimate~\eqref{opt:l2con-simp}. 
The bound~\eqref{eq:l2-core} improves the confidence-dependent term from~$s\log(\delta^{-1})$ to~$\log^2(s) \log(\delta^{-1})$ by incorporating the~$\ell_\infty$-norm constraint into the estimation process.
One can envision applications where this improvement is crucial, e.g., in the context of generalized linear models~\cite{ostrovskii2021finite,marteau2019beyond}, where one has to take the union bound over an exponential in~$s$ number of events; this would render the~$s \log(\delta^{-1})$ term into~$s^2$ even in the fixed-probability regime, thus breaking the~$\wt O(s/n)$ parametric rate; see~\cite[Sec.~4]{ostrovskii2021finite} for a more technical discussion. 
This improvement comes at the very modest price of~$\log^2(s)$ inflation of the~$\delta$-independent term. 

Finally, it is an interesting question whether the~$\log^2(s)$ factor in~\eqref{eq:l2-core} is an artifact of our analysis.
This factor appears when we bound the measurements of random processes~$\langle \xi, \vphi * \xi \rangle_n$ and~$\|\vphi * \xi \|_{n,2}^2$ at extremal filters~$\vphi$, i.e.~such that~$\F_{n}[\vphi]$ is an~$s$-sparse vector with nonzero entries of the same magnitude. To this end, we use the classical oversampling trick, extracting ordinary convolution~$(\xi * \vphi)_t,$~$t \in [n]_{\pm}$ from the cyclic convolution of the noise, modulo~$4n+1$, with the filter~$\vphi$ padded with zeroes on the sides; see e.g.~\eqref{eq:Q-oper-prelim}. Then,~$\log(s)$ appears as the~$\ell_\infty$-norm of an oversampled filter with a sparse DFT, i.e. the sum of the Dirichlet kernel over a subset of~$s$ nodes of the DFT grid (see Lemma~\ref{lem:oversampling-linf-sparse}). It is unclear how to sidestep the Dirichlet kernel here.




\subsection{One-sided estimation}
\label{sec:l2-causal}

%

Proceeding as in Section~\ref{sec:l2-core} but replacing Proposition~\ref{prop:hybrid-oracle} with Proposition~\ref{prop:hybrid-oracle-causal}, we arrive at the estimator of the form~$\wh\vphi_+ * y$, where~$\wh\vphi_+$ is an optimal solution to the optimization problem
\begin{equation}
\label{opt:l2con-half}
\min_{\vphi \in \C_{2n}^+(\Z)}
\left\{ 
\|\Delta^{-n}[\vphi*y - y]\|_{n,2}^2:
\max\{\|\F_{2n}^+[\vphi]\|_{1},  c_1 s^2 \log(en) \|\F_{2n}^+[\vphi]\|_{\infty} \} \le \frac{c_1 s^2 \log(en)}{\sqrt{2n+1}}
\right\},
\end{equation}
where~$c_1$ is the constant from Proposition~\ref{prop:hybrid-oracle-causal} and~$\Delta$ is the lag operator:~$(\Delta x)_t = x_{t-1}$.
Note that the optimization problem~\eqref{opt:l2con-half} depends on the observations on~$[-2n, 2n]$ as before; yet, now we minimize the~$\ell_2$ norm of the residual on the ``right half'' interval~$[0,2n]$ rather than on the ``core''~$[-n,n]$.
Respectively, Theorem~\ref{th:l2con-core} can be adapted to the new estimator, which results in a guarantee for the MSE on the ``right-hand'' subdomain~$[0,2n]$, as follows.
\begin{theorem}
\label{th:l2con-half}
In the assumptions of Proposition~\ref{prop:hybrid-oracle-causal}, an optimal solution~$\wh\vphi_+ \in \C_{2n}^+(\Z)$ to~\eqref{opt:l2con-half} satisfies, for arbitrary~$\delta \in (0,1)$, the following inequality with probability at least $1-\delta$: 
\begin{equation}
\label{eq:l2-half}
\left\|\Delta^{-n}[\wh\vphi^+ * y - x]\right\|_{n,2}^2 
\lasymp \sigma^2 \left(s^2\log^2(en) + \log(e\delta^{-1}) \right) \log^2(es).
\end{equation}
\end{theorem}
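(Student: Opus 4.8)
The plan is to mirror the argument for Theorem~\ref{th:l2con-core} step by step, with the two-sided objects replaced by their one-sided counterparts from Section~\ref{sec:oracle-causal}. First I would set up the basic error decomposition. Let $\wh\vphi_+$ be optimal in~\eqref{opt:l2con-half}, and let $\vphi^X_+$ be the reproducing filter exhibited in Proposition~\ref{prop:hybrid-oracle-causal}, which satisfies the $\ell_1$- and $\ell_\infty$-norm bounds in~\eqref{eq:oracle-norms-causal} and is therefore feasible in~\eqref{opt:l2con-half}. Writing $y = x + \sigma\xi$ and using that $\vphi^X_+ * x = x$ (so $\Delta^{-n}[\vphi^X_+ * y - y] = \sigma\,\Delta^{-n}[(\vphi^X_+ - \delta_0)*\xi]$, where $\delta_0$ is the unit at $0$), optimality of $\wh\vphi_+$ gives
\begin{equation*}
\|\Delta^{-n}[(\wh\vphi_+ - \delta_0)*y]\|_{n,2}^2 \le \|\Delta^{-n}[(\vphi^X_+ - \delta_0)*y]\|_{n,2}^2 = \sigma^2 \|\Delta^{-n}[(\vphi^X_+ - \delta_0)*\xi]\|_{n,2}^2.
\end{equation*}
Expanding $\wh\vphi_+ * y - x = (\wh\vphi_+ - \delta_0)*y - (\wh\vphi_+*x - x) + \sigma\,\wh\vphi_+*\xi$ and rearranging the quadratic inequality in the standard way yields, after shifting by $\Delta^{-n}$, a bound of the form
\begin{equation*}
\|\Delta^{-n}[\wh\vphi_+ * y - x]\|_{n,2}^2 \lasymp \|\wh\vphi_+*x - x\|^2_{\textup{relevant window}} + \sigma^2\sup_{\vphi}\|\Delta^{-n}[\vphi*\xi]\|_{n,2}^2 + \sigma\sup_{\vphi}|\langle \Delta^{-n}[\vphi*\xi],\, \Delta^{-n}\eta\rangle_n| + \sigma^2\|(\vphi^X_+-\delta_0)*\xi\|^2_{\textup{window}},
\end{equation*}
where the supremum is over the feasible set $\Phi_+$ of~\eqref{opt:l2con-half} and $\eta$ collects the deterministic residual terms (here $\wh\vphi_+ * x - x$ is supported outside $[0,2n]$ because $\vphi^X_+$ reproduces $x$, so it does not contribute on the right half-window — this is exactly the role the one-sided support plays). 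The first deterministic term then vanishes on the window we measure, and the last term is controlled by a chi-square tail using $\|\vphi^X_+\|_2^2 \asymp s^2\log(en)/(2n+1)$ (obtained from~\eqref{eq:oracle-norms-causal} with $p=2$ via Parseval), contributing $O(\sigma^2(s^2\log(en) + \log(\delta^{-1})))$.

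The heart of the proof is bounding the two stochastic suprema over $\Phi_+$, exactly as in the discussion after Theorem~\ref{th:l2con-core}. Because $\Phi_+ = \{\vphi \in \C_{2n}^+(\Z): \max\{\|\F_{2n}^+[\vphi]\|_1,\, c_1 s^2\log(en)\|\F_{2n}^+[\vphi]\|_\infty\} \le c_1\cst s^2\log(en)/\sqrt{2n+1}\}$, the set $\{u \in \C^{2n+1}: \max\{\|u\|_1, K\|u\|_\infty\} \le K\}$ with $K \asymp s^2\log(en)$ has extreme points that are $K'$-sparse phased binary vectors with $K' = \lceil K\rceil$. I would pass to ordinary convolution via the oversampling trick modulo $4n+1$ (padding $\vphi$ with zeros), apply Krein–Milman to reduce to extreme filters, and take a union bound over the $\binom{2n+1}{K'}$ supports — giving overhead $\log\binom{2n+1}{K'} \asymp K'\log(en) \asymp s^2\log^2(en)$. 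At an extreme filter, $\langle\xi,\vphi*\xi\rangle_n$ and $\|\vphi*\xi\|_{n,2}^2$ are subexponential; combining the per-filter Bernstein-type tail with the union bound yields the $\sigma^2 s^2\log^2(en)$ leading term, and a single $+\log(\delta^{-1})$ from the confidence level. The $\log^2(es)$ factor enters, just as for Theorem~\ref{th:l2con-core}, because the $\ell_\infty$-norm of the oversampled extreme filter is a sum of the Dirichlet kernel over $K'$ nodes of the DFT grid, giving a $\log(es)$ factor that is then squared through the two places it is used (bounding the $\ell_\infty$-norm times the $\ell_1$-norm in the Bernstein variance proxy). Assembling all pieces gives precisely~\eqref{eq:l2-half}.

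The main obstacle is making the reduction to ordinary convolution and the one-sided DFT machinery fit together cleanly: one has to be careful that the operator $\vphi \mapsto (\Delta^{-n}[\vphi*\xi]_t)_{t \in [n]_\pm}$, for $\vphi \in \C_{2n}^+(\Z)$, is a windowed ordinary convolution that can be realized as a slice of a cyclic convolution modulo $4n+1$, so that the DFT-diagonalization and the extreme-point characterization of the feasible $\ell_1/\ell_\infty$-ball in the Fourier domain still apply verbatim. Once that bookkeeping is done, every estimate is a direct transcription of the corresponding step in the proof of Theorem~\ref{th:l2con-core} with $s \rightsquigarrow c_1 s^2\log(en)$ in the sparsity budget, which is exactly what produces the $s^2\log^2(en)$ in place of $s\log(en)$. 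I would present the two-sided proof in full in Section~\ref{sec:stat-proofs} and then indicate only the substitutions needed for the one-sided case, flagging the windowing lemma as the one genuinely new ingredient.
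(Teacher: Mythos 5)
Your route is the one the paper intends: the paper gives no separate proof of Theorem~\ref{th:l2con-half}, stating that it is obtained by transcribing the proof of Theorem~\ref{th:l2con-core} with the sparsity budget of the $\ell_1/\ell_\infty$-ball inflated from~$18s$ to~$K' \asymp s^2\log(en)$, so that the union bound over the~${2n+1 \choose K'}$ supports of the extreme points produces~$\log {2n+1 \choose K'} \asymp s^2\log^2(en)$ in place of~$s\log(en)$ --- exactly your substitution. Moreover, the ``windowing lemma'' you flag as the genuinely new ingredient is already built into the notation: for~$\vphi \in \C_{2n}^+(\Z)$ one has~$\Delta^{-n}[\vphi * \xi] = (\Delta^{-n}\vphi)*\xi$ with~$\Delta^{-n}\vphi \in \C_n(\Z)$ and~$\F_{2n}^+[\vphi] = \F_n[\Delta^{-n}\vphi]$ by definition, so the cyclic embedding, DFT diagonalization, and extreme-point reduction apply verbatim to the shifted filter.

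There is, however, one step in your sketch that is wrong as stated and would fail: you claim that~$\wh\vphi_+ * x - x$ ``is supported outside~$[0,2n]$ because~$\vphi^X_+$ reproduces~$x$'' and hence contributes nothing on the measured window. This conflates the data-driven filter~$\wh\vphi_+$ with the oracle filter~$\vphi^X_+$: only the latter reproduces~$X$, whereas~$\wh\vphi_+$ is constrained solely through the norms of~$\F_{2n}^+[\wh\vphi_+]$, so~$\wh\vphi_+ * x - x$ is in general nonzero on every window, causality notwithstanding. In the paper's proof of Theorem~\ref{th:l2con-core} this bias term is precisely the delicate one: in~\eqref{eq:l2con-error-decomp-2} one uses that~$x - \wh\vphi * x$ is a finite linear combination of shifts of~$x$ and hence lies in~$X$ by shift-invariance, inserts the rank-$\le s$ projector~$\Pi_n T_n$ acting on the noise in the cross term, applies Cauchy--Schwarz, and absorbs~$\tfrac14\|x - \wh\vphi * x\|_{n,2}^2$ into the left-hand side; this is what generates the~$\sigma^2\|\Pi_n T_n \xi\|_2^2 \lasymp \sigma^2(s + \log(\delta^{-1}))$ contribution, cf.~\eqref{eq:l2con-chi-square}. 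The same device, with the slice of~$X$ taken over the shifted window~$[0,2n]$ (still of dimension~$\le s$), is what is needed here; nothing about the one-sided support makes the term vanish. Two smaller points: your displayed expansion of~$\wh\vphi_+ * y - x$ is not an identity (harmless, since the actual decomposition is the one in~\eqref{eq:l2con-error-decomp-1}), and with sparsity budget~$K' \asymp s^2\log(en)$ the factor delivered by Lemma~\ref{lem:oversampling-linf-sparse} is~$\log(O(s^2\log(en)))$ rather than literally~$\log(es)$, so you should track how it is absorbed into the stated~$\log^2(es)$ rather than asserting it is unchanged from the two-sided case.
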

The proof of Theorem~\ref{th:l2con-half} is carried out by adapting the proof of Theorem~\ref{th:l2con-core} in a straightforward fashion, so we omit it here.
The key difference of this bound with the bound~\eqref{eq:l2-core} of Theorem~\ref{th:l2con-half} is that instead of the term~$s \log n \asymp \log \left( {n \choose s} \right)$, in~\eqref{eq:l2-half} we have~$s^2 \log^2(n) \asymp \log\big( {n \choose s^2 \log n} \big)$, essentially because of the deteriorated bound on the~$\ell_1$-norm in Proposition~\ref{prop:causal-bound} -- which, in turn, leads to the increased number of extreme points of the corresponding~$\ell_1/\ell_\infty$-polytopal ball.
%
Motivated by the near-optimality of Proposition~\ref{prop:hybrid-oracle-causal}, see Appendix~\ref{apx:one-sided}, we conjecture that Theorem~\ref{th:l2con-half} cannot be improved by more than a logarithmic factor, 
as stated below.
\begin{conjecture}
\label{conj:causal-optimal}
For some~$c_0, c, r > 0$, the following holds: for any~$s,n \in \N: n \ge c_0 s^2 \log^r(n)$, there is an~SIS~$X(w_1, ..., w_s)$ with~$w_1, ..., w_s \in \T$, such that, for any estimator~$\wh x = \wh x(y_{-3n}, ..., y_{0})$,
\[
\Risk_{n,\delta}(\wh x, X)
\ge 
c\sigma^2 \left(s^2\log^r(n) + \log(\delta^{-1}) \right).
\]
\end{conjecture}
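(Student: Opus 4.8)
The plan is to prove Conjecture~\ref{conj:causal-optimal} as a minimax lower bound for a single, carefully chosen shift-invariant subspace, taking as a template the approximation-theoretic analysis behind Appendix~\ref{apx:one-sided}, which already pins down (asymptotically) the minimal $\ell_2$-norm of a one-sided reproducing filter on the space of $s$-degree polynomials. The first step is a reduction to linear algebra. For a \emph{fixed} $s$-dimensional SIS $X$, estimating $(x_t)_{|t|\le n}$ from the one-sided observations $(y_t)_{-3n\le t\le 0}$ is an unconstrained Gaussian linear model, and for such a model linear estimators are minimax (this is the ``known minimax risk on a subspace'' fact invoked right after~\eqref{eq:intro-lower}, cf.~\cite{mourtada2022exact}); consequently
\[
\Risk_{n,\delta}(\wh x|X)\;\gasymp\;\sigma^2\Big(\tr(M)+\|M\|_{\op}\log(\delta^{-1})\Big)\big/(2n+1),
\]
where $M$ is the error covariance of the Gauss--Markov estimator of $(x_t)_{|t|\le n}$ from $(y_t)_{-3n\le t\le 0}$; the lower-bound direction is the standard Gaussian prior / Bayes-risk argument (let the prior variance tend to infinity) together with a lower tail bound for the Gaussian quadratic form $\|M^{1/2}\xi\|_2^2$. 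Hence it suffices to exhibit $w_1,\dots,w_s\in\T$ for which $\tr(M)$, and thus $\|M\|_{\op}$, is large enough to dominate $s^2\log^r(n)$.

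Next comes the construction of $X$. For $t$ in the extrapolation window $\{1,\dots,n\}$, the diagonal entry $M_{tt}$ is the minimal MSE of a linear unbiased estimator of $x_t$ from $(y_\tau)_{-3n\le\tau\le 0}$, which by Gauss--Markov equals $\sigma^2\,v_t^{\htop}(V^{\htop}V)^{-1}v_t$, where $V=[\,w_k^{\tau}\,]_{\tau\in\{-3n,\dots,0\},\,k\in[s]}$ is the (fat) Vandermonde design and $v_t=(w_k^{\,t})_{k\in[s]}$ -- equivalently, the squared $\ell_2$-norm of the minimal-norm one-sided filter reproducing the ``$t$-step advance'' of $X$, a direct generalization of the minimal reproducing ($t=0$) filter studied in Appendix~\ref{apx:one-sided}. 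To make these quantities blow up, one should take $w_k=e^{i\omega_k}$ with the frequencies $\omega_1,\dots,\omega_s$ forming a tight cluster of width $\varepsilon=\varepsilon(n,s)$ that is sub-Rayleigh for the window $\{-3n,\dots,0\}$ (so that $V^{\htop}V$ is nearly singular), yet genuinely \emph{distinct} at a scale tied to $1/n$; a true repeated root $w_1=\dots=w_s$ reduces the problem to polynomial extrapolation by a fixed fraction, which is ill-conditioned in $s$ but does not grow with $n$. One then lower-bounds $\|M\|_{\op}$ by the largest $M_{tt}$ (attained near $t=n$, the point farthest from the data), or sums $M_{tt}$ over $t\in\{1,\dots,n\}$ for the $\tr(M)$ version; the $\log(\delta^{-1})$ term is then inherited from the Gaussian quadratic-form lower bound.

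The crux -- and, I expect, the reason this is only conjectured -- is the quantitative estimate. One must control the smallest singular value of a Vandermonde/prolate-type matrix whose $s$ nodes are confined to an arc of length $\asymp 1/n$ on a grid of $\Theta(n)$ points, together with the projection of $v_t$ onto the associated near-null directions, and then \emph{optimize the cluster width $\varepsilon$ as a function of $(n,s)$} so that the resulting Christoffel-type quantity has order $s^2\log^r(n)$ for a suitable $r$ -- matching Theorem~\ref{th:l2con-half} up to the $\log^2(es)$ factor -- uniformly over all $n\gasymp s^2\log^r(n)$. This is exactly the $n$-uniform, non-asymptotic strengthening of the Appendix~\ref{apx:one-sided} computation $m\|\phi^{*,m}_+\|_2^2\to s^2$; the available tools come from the super-resolution and Slepian prolate-function literature, but extracting the precise balance between the ill-conditioning of $V^{\htop}V$ and the extrapolation distance -- hence the exponent $r$, and even the clean $s^2$ scaling -- is the genuinely hard part.
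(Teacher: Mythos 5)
First, a point of calibration: the statement you were asked about is stated in the paper as Conjecture~\ref{conj:causal-optimal} and is \emph{not proved} there; the only supporting evidence the paper offers is the asymptotic computation of Appendix~\ref{apx:one-sided} (Proposition~\ref{prop:causal-bound}, $m\|\phi_+^{*,m}\|_2^2\to s^2$ for the polynomial subspace) and the upper bound of Theorem~\ref{th:l2con-half}. So there is no paper proof to compare against, and your text, by your own admission, is a program rather than a proof: it does not establish the conjecture.

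Concretely, two gaps remain. The central one is exactly the step you flag: you never produce nodes $w_1,\dots,w_s\in\T$ and a \emph{non-asymptotic, $n$-uniform} lower bound of order $s^2\log^r(n)$ on the Gauss--Markov extrapolation error $\sigma^2 v_t^{\htop}(V^{\htop}V)^{-1}v_t$ (equivalently, on the squared norm of the minimal one-sided filter reproducing the $t$-step advance). The appendix computation gives only $s^2$ in the limit $m\to\infty$ for repeated roots, which, as you note, does not grow with $n$ and hence cannot yield $s^2\log^r(n)$ for any $r>0$; whether a clustered configuration at the right sub-Rayleigh scale produces the extra $\log^r(n)$, uniformly over $n\gasymp s^2\log^r(n)$, is precisely the open content of the conjecture, and the prolate/Vandermonde singular-value bounds you cite are not assembled into an actual estimate. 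The second gap is the reduction itself: the claim that for a fixed SIS the minimax $\delta$-risk from the one-sided observations is $\gasymp \sigma^2\big(\tr(M)+\|M\|_{\op}\log(\delta^{-1})\big)$ (up to normalization) is asserted, not proved. For the expected loss the Gaussian-prior/Gauss--Markov argument is standard, but the conjecture concerns a loss \emph{quantile}, and lower-bounding the $(1-\delta)$-quantile over all (possibly nonlinear, biased) estimators simultaneously by the trace term and the $\|M\|_{\op}\log(\delta^{-1})$ term requires a separate argument (a two-point or Bayesian quantile bound in the worst eigendirection of $M$, plus care that $\Risk_{n,\delta}$ is defined with a $\tfrac{1}{2n+1}$ normalization whereas the conjectured bound, like Theorem~\ref{th:l2con-half}, is stated on the unnormalized scale). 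Until both pieces are supplied, what you have is a plausible and well-motivated roadmap---consistent with how the paper itself motivates the conjecture---but not a proof.
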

\begin{remark}
\label{rem:prediction}
{\em Replacing the constraint~$\vphi \in \C_{2n}^+(\Z)$ in~\eqref{eq:l2-half} with~$\Delta^{-h} \vphi \in \C_{2n}^+(\Z)$ for~$h \in \N$, one can generalize estimator~\eqref{opt:l2con-half} to the ``prediction'' setup, where one must estimate~$x_t$ from noisy observations on a segment {\em not containing~$t$}, with an appropriate modification of Theorem~\ref{th:l2con-half} remaining valid.
Such a modification is straightforward and left to the reader (cf.~\cite[Sec.~3.2]{harchaoui2015adaptive}).}
\end{remark}
Our next goal is to estimate~$x_t$ on the full domain~$[-2n,2n]$.
Of course, assuming~$x \in \XX(\T^s)$ instead of~$x \in \XX(\D^s)$, we can simply run the estimation procedure~\eqref{opt:l2con-half} on the observations~$y_t$ and their ``time-reversed'' version~$\wt y_{t} := y_{-t}$. 
Yet, such a naive estimate would suffer from the mean-squared error of order~$s^2$, cf.~\eqref{eq:l2-half}, falling short of matching the lower bound in~\eqref{eq:intro-lower}. 
Fortunately, there is a better approach: it turns out that a nearly linear in~$s$ bound can be ensured by employing instead a multiscale technique that we are about to present next.


\subsection{Full-domain estimation and minimax risk on~$\XX_s$}
\label{sec:l2-full}

Our technique for constructing the full-domain estimate can be summarized as follows.
Starting from the initial interval~$[-n,n]$, take the next interval adjacent to the previous one on the right (resp.~left) and shorter by a constant factor; fit an estimate on this interval; repeat this process until the interval length shrinks to~$O(s)$.
Formally, assuming first that~$s$ and~$n [\ge \odima{3s}]$ are powers of~$3$, we implement the following procedure:
\begin{itemize}
\item[1.] Let~$\wh x_t^\full := (\wh\vphi^{(0)} * y)_t^{\vphantom{\full}}$ for~$|t| \le n$,  where~$\wh\vphi^{(0)}$ is an optimal solution to~\eqref{opt:l2con-core}. 
\item[2.] Letting~$n_k := n3^{-k}$ and~$h_k := 2n-2n_k = 2n(1-3^{-k})$, define two families of intervals:
\begin{equation}
\label{eq:zeno-grid}
\begin{aligned}
I^{(k,+)} := \left(2n - n_{k-1}, 2n - n_{k}\right] 
	&= (h_k-n_k, h_k + n_k],\notag\\
I^{(k,-)} := \left[n_{k} - 2n, n_{k-1} - 2n\right) 
	&= [-h_k-n_k, -h_k+n_k)  \notag
	\end{aligned}
\;\; \text{for}\;\; k \in \left\{1, ..., K := \log_3\left(\frac{n}{\odima{3s}}\right)\right\}.
\end{equation}
Clearly, we have~$n_K = \odima{3s}$ and~$|I^{(k,+)} \cap \Z| = |I^{(k,-)} \cap \Z| = 2n_k \ge \odima{6s}$.
Moreover, it holds that~$\bigsqcup_{k = 1}^K I^{(k,+)} = (n,2n-\odima{3s}]$ and~$\bigsqcup_{k = 1}^K I^{(k,-)} = [-2n+\odima{3s},-n)$.
In other words,~$I^{(k+1,+)}$ (resp.~$I^{(k+1,-)}$) is appended to the previous interval~$I^{(k,+)}$ (resp.~$I^{(k,-)}$) on the right (resp.~left), taking up~$2/3$ of the remaining gap~$[2n-n_k,2n]$ (resp.~$[-2n,-2n+n_k]$); this process is repeated until the interval length becomes~$O(s)$, at which point we terminate.
\item[3.] For~$k \in \{1, ..., K\}$, we solve the counterparts of~\eqref{opt:l2con-core} on~$I^{(k,+)}$ and~$I^{(k,-)}$ instead of~$[-n,n]$:
\begin{equation}
\label{eq:side-filters}
\bald\nvsps
\wh\vphi^{(k,+)} &\in \Argmin_{\vphi \in \C_{n_k}(\Z)}
\left\{ \|\Delta^{-h_k}[\vphi*y - y]\|_{n_k,2}:\, \max\{\|\F_{n_k}[\vphi]\|_{1},  \odima{2s}\|\F_{n_k}[\vphi]\|_{\infty} \} \le \frac{\odima{2s}}{\sqrt{2n_k+1}} \right\},\\
\wh\vphi^{(k,-)} &\in \Argmin_{\vphi \in \C_{n_k}(\Z)}
\left\{ \|\Delta^{+h_k}[\vphi*y - y]\|_{n_k,2}:\, \max\{\|\F_{n_k}[\vphi]\|_{1},  \odima{2s}\|\F_{n_k}[\vphi]\|_{\infty} \} \le \frac{\odima{2s}}{\sqrt{2n_k+1}} \right\}.
\eald
\end{equation}
\item[4.]
We let~$\wh x_t^\full := (\wh\vphi^{(k,\pm)} * y)_t^{\vphantom{\full}}$ for~$t \in I^{(k,\pm)}$ respectively, and~$\wh x_t^\full := y_t^{\vphantom{\full}}$ for~$t : |t| > 2n-\odima{3s}$. 
\end{itemize}
Note that, the resulting estimate~$(\wh x_t^\full, |t| \le 2n)$ uses the observations~$(y_{t}^{\vphantom\full}, |t| \le 2n)$, as desired.
The following result---essentially, a corollary of Theorem~\ref{th:l2con-core}---shows that our technique leads to essentially the same error on the whole domain as ``in the core,'' up to an~$O(\log(n/s))$ overhead.
\begin{corollary}
\label{cor:l2-full}
Assume~$x \in \XX_s(\C)$,~$n \ge \odima{3s}$, and~$n,s$ are powers of~$3$. 
Estimate~$\wh x_t^\full$ satisfies
\[
\big\|\wh x^\full - x\big\|_{2n,2}^2 \le 
\odima{80}\, \sigma^2 \left( \odima{s}\log(2n+1) + \odima{3s} + \log(\delta^{-1})  \right) \log^2(e^4 s) \,\log_3(n/s).
\]
\end{corollary}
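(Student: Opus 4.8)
\textbf{Proof proposal for Corollary~\ref{cor:l2-full}.}
The plan is to decompose the full-domain error into $O(\log_3(n/s))$ contributions, one per scale, and control each by a single application of Theorem~\ref{th:l2con-core} (properly shifted), then sum. First I would observe that the core piece $(\wh x_t^\full, |t| \le n) = (\wh\vphi^{(0)} * y)_t$ is handled verbatim by Theorem~\ref{th:l2con-core}: its contribution to $\|\wh x^\full - x\|_{2n,2}^2$ is at most $80\cst^2\sigma^2(3s\log(2n+1) + 10s + \log(\delta'^{-1}))\log^2(9e^4 s)$ for any confidence parameter $\delta'$. The far tails $|t| > 2n - 9s$ contribute $\|\wh x^\full - x\|^2 = \sigma^2 \sum |\xi_t|^2$ over at most $4\cdot 9s = 36s$ coordinates; since each $|\xi_t|^2$ is (in the complex Gaussian case) a unit-mean exponential, a standard concentration bound gives $\sigma^2 \sum_{|t| > 2n - 9s} |\xi_t|^2 \lesssim \sigma^2(s + \log(\delta'^{-1}))$ with probability $1-\delta'$; the stated bound absorbs this easily, so this piece is routine.

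The heart of the matter is the side intervals. For each $k \in \{1, \dots, K\}$ and each sign, the estimate on $I^{(k,\pm)}$ is, by construction~\eqref{eq:side-filters}, exactly the ``core'' estimator of Theorem~\ref{th:l2con-core} applied at scale $n_k = n 3^{-k}$ after the shift $\Delta^{\mp h_k}$ — indeed $\Delta^{\mp h_k}$ re-centers the window $I^{(k,\pm)}$ (whose midpoint is $\pm h_k$) onto $[-n_k, n_k]$, and shift-invariance of the signal class (if $x \in X(w_1,\dots,w_s)$ then $\Delta^{\mp h_k} x \in X(w_1,\dots,w_s)$ too) together with the unitary invariance of the Gaussian noise means the shifted model is an instance of the original one with $n$ replaced by $n_k$. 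Here I must check the hypothesis of Theorem~\ref{th:l2con-core}: it requires $n_k = 9 m_k$ with $m_k \ge s - 1$, equivalently $n_k \ge 9(s-1)$; since $n_K = 9s$ and $n_k$ is decreasing in $k$, we have $n_k \ge 9s \ge 9(s-1)$ for all $k \le K$, and $n_k$ is a power of $3$ divisible by $9$ as needed (this is where ``$n, s$ powers of $3$'' is used). Applying Theorem~\ref{th:l2con-core} at scale $n_k$ with confidence $\delta'$ gives, with probability $1 - \delta'$,
\[
\big\|\Delta^{\mp h_k}[\wh\vphi^{(k,\pm)} * y - x]\big\|_{n_k,2}^2 \le 80\cst^2\sigma^2\big(3 s\log(2n_k+1) + 10s + \log(\delta'^{-1})\big)\log^2(9e^4 s),
\]
and since $\|\Delta^{\mp h_k}[\cdots]\|_{n_k,2}^2 = \sum_{t \in I^{(k,\pm)}} |\wh x_t^\full - x_t|^2$ and $n_k \le n$, this contribution is at most $80\cst^2\sigma^2(3s\log(2n+1) + 10s + \log(\delta'^{-1}))\log^2(9e^4 s)$.

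Finally I would assemble these. There are $1 + 2K + 1 \le 2K + 2 \le 4K$ events (one core, $2K$ side, one tail; using $K \ge 1$), each of which I instantiate with $\delta' := \delta/(4K)$ where $K = \log_3(n/(9s))$; a union bound makes all of them hold simultaneously with probability $\ge 1 - \delta$. Summing the $\le 2K + 1$ estimate-type contributions (core plus sides), each bounded by $80\cst^2\sigma^2(3s\log(2n+1) + 10s + \log(\delta'^{-1}))\log^2(9e^4 s)$, plus the tail contribution, and using $\log(\delta'^{-1}) = \log(\delta^{-1}) + \log(4K) \le \log(\delta^{-1}) + 4s\log(2n+1)$ crudely (since $4K \le 4\log_3 n \le (2n+1)^{4s}$ comfortably, absorbing the $\log(4K)$ into a slightly larger multiple of $s\log(2n+1)$ and using $K \le \log_3(n/s)$), yields
\[
\big\|\wh x^\full - x\big\|_{2n,2}^2 \le 160\cst^2\sigma^2\big(4s\log(2n+1) + 10s + \log(\delta^{-1})\big)\log^2(9e^4 s)\,\log_3(n/s),
\]
matching the claim after adjusting the constant from $80$ to $160$ to swallow the tail term and the $\log(4K)$ slack. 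The only genuine obstacle I anticipate is bookkeeping: verifying the scale hypotheses $n_k \ge 9(s-1)$ hold at every level (which is why $s, n$ are taken as powers of $3$ so that $n_K = 9s$ exactly), and doing the $\log(4K)$-versus-$s\log(2n+1)$ absorption carefully enough that the final constant is honestly $\le 160$; neither is deep, but both need to be written out. Everything else — the shift-reduction, the far-tail $\chi^2$ bound, and the union bound — is mechanical given Theorem~\ref{th:l2con-core}.
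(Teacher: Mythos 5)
Your proposal is correct and follows essentially the same route as the paper: decompose $\|\wh x^\full - x\|_{2n,2}^2$ into the core term, the $2K$ shifted side-interval terms (each an instance of Theorem~\ref{th:l2con-core} at scale $n_k$, justified by shift-invariance of the class and of the noise), and the $\chi^2_{36s}$ tail term for $|t|>2n-9s$, then conclude by a union bound with per-event confidence $\delta/(2K+2)$ and absorb the resulting $\log K$ slack into the enlarged constants. The paper's own proof is just a terser version of exactly this argument, so no gap to report.
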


Finally, as we discuss next, the assumption that~$n,s$ are powers of 3 can be easily removed.
\begin{remark}
\label{rem:nontriadic}
{\em In the general case, the procedure described above can be adjusted as follows.
Let~$s_0 := 3^{\lceil \log_3(s) \rceil} \in [s,3s]$,~$n_0 := 3^{\lfloor \log_3(n) \rfloor}$, and~$K_0 := \log_3(n_0/s_0) - 1$.
Now, cover~$[-2n,2n]$ with at most~$3$ overlapping intervals of halfwidth~$2n_0$; specifically,~consider the decomposition
\[
[-2n,2n] \;=\;  [-2n, -2n+4n_0] \,\cup\, [-2n_0, 2n_0] \,\cup\, [2n-4n_0, 2n].
\]
When~$n = 3^m$ for~$m \in \N$, one has~$n_0 = n$, and all three intervals~$\wt I^{(-1)}, \wt I^{(0)}, \wt I^{(1)}$ in the right-hand side coincide. 
When~$n = 3^m-1$, the intervals have almost no overlap (this is ``the worst case'').
Now, let us run the procedure separately on each of the three intervals (with~$s_0, n_0, K_0$ instead of~$s,n,K$) to form the estimates~$\wt x_t{}^{(-1)}, \wt x_t{}^{(0)}, \wt x_t{}^{(1)}$, each defined on the respective interval. 
Output~$\wh x_t = \wt x_t{}^{(0)}$ for~$t \in \wt I^{(0)} \setminus (\wt I^{(-1)} \cup \wt I^{(1)})$,~$\wh x_t = \wt x_t{}^{(\pm 1)}$ for~$t \in \wt I^{(\pm 1)} \setminus (\wt I^{(0)} \cup \wt I^{(\mp 1)})$, and
in the overlaps simply average the estimates:~$\wt x_t = \half(\wt x_t{}^{(0)} + \wt x_t{}^{(\pm 1)})$ for~$t \in \wt I^{(0)} \cap \wt I^{(\pm 1)}$. 
By the triangle inequality and union bound, the guarantee of Corollary~\ref{cor:l2-full} is preserved up to a constant factor.}
\end{remark}

Note that the result we have just established immediately implies the previously announced bound~\eqref{eq:l2-core-intro-risk-minimax} for the minimax~$\delta$-risk over~$\XX_s$.


\subsection{Signal detection}
\label{sec:detection}

%
We now focus on the {detection} counterpart of the estimation problem considered previously. 
Namely, we are to test the null hypothesis that the observations~$(y_t, |t| \le 2n)$ contain no signal,
\begin{equation}
\label{def:hyp-nul}
\Hyp_{0,n} := \{x \in \C(\Z): \| x \|_{2n,2} = 0\},
\end{equation}
against the alternative that~$x$ belongs to some SIS of dimension~$s$ and is of large enough norm:
\begin{equation}
\label{def:hyp-alt}
\Hyp_{s,n}(\sL) := \{x \in \XX_s(\C): \|x\|_{2n,2} \ge \sL\}.
\end{equation}
Note that this setup naturally extends that of detecting a sparse signal (see e.g.~\cite{ingster2010detection}): the latter one can be recovered if we replace~$\XX_s$ in~\eqref{def:hyp-alt} with~$\smash{\XX(\T_{2n}^{(s)})}$. 
We do not consider here the superficially more general setup of~\cite{jn-2013}, where the null hypothesis allows~$x$ to come from a known SIS: our methodology can be extended to accommodate for this, but at the cost of obscuring the essential ideas in the proofs.
Our testing procedure compares the statistic
\begin{equation}
\label{eq:detection-statistic}
\wh T_n(y) := \| y \|_{2n,2}^2 - \|y - \wh x^\full\|_{2n,2}^2,
\end{equation}
where~$\wh x^\full$ is the full-domain estimate from Section~\ref{sec:l2-full}, against the threshold value~$\sL_0^2$ that corresponds to the in-probability bound for~$\|\wh x^\full - x \|_{2n,2}^2$ established in Corollary~\ref{cor:l2-full}, and rejects the null whenever the threshold is surpassed. 
It admits the following statistical guarantee.

\begin{theorem}
\label{th:detection}
Consider testing~$\Hyp_{0,n}$, cf.~\eqref{def:hyp-nul}, against~$\Hyp_{s,n}(\sL)$, cf.~\eqref{def:hyp-alt}, assuming~$n \ge \odima{3s}$ and
\[
\sL^2 \ge \sL_0^2 := \odima{300}\,\sigma^2 \big(\, 2s\log (2n+1) + 6s + \log(e^3s) \log\left(6{\log_3(n/s)}{\delta^{-1}}\right) \big) \log(e^4 s) \, \log_3(n/s).
\]
The test rejecting~$\Hyp_{0,n}$ when~$\wh T_n(y) > \odima{\tfrac{3}{8}} \sL_0^2$ makes errors of either type with probability~at most~$\delta$.
\end{theorem}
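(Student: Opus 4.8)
The plan is to expand $\wh T_n(y) = \sum_{|t| \le 2n}\big(|y_t|^2 - |y_t - \wh x^\full_t|^2\big)$ along the multiscale structure of $\wh x^\full$ (as built in Section~\ref{sec:l2-full}; the non-triadic case reduces to it via Remark~\ref{rem:nontriadic}) and to argue separately under the null and the alternative, using Proposition~\ref{prop:hybrid-oracle} together with the stochastic-process bounds already established inside the proof of Theorem~\ref{th:l2con-core}. Recall that $[-2n,2n]$ is partitioned into the ``core'' $[-n,n]$, the $2K$ side intervals $I^{(k,\pm)}$ with $K = \log_3(n/(9s))$, and the ``outer'' band $\{|t| > 2n - 9s\}$. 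Let $\cC$ collect the first $2K+1$ of these ``convolution pieces'': on $P \in \cC$ one has $\wh x^\full = \wh\vphi_P * y$ for a data-driven filter $\wh\vphi_P$ of halfwidth $n_P \le n$ constrained to a feasible set $\Phi_P$ as in~\eqref{opt:l2con-core} or~\eqref{eq:side-filters}, whereas $\wh x^\full = y$ on the outer band. Writing $\eta := \sigma\xi$ and denoting by $\langle\cdot,\cdot\rangle_P$, $\|\cdot\|_P$ the Hermitian form and norm restricted to $P$ (with the shift $\Delta^{\pm h_k}$ absorbed), one gets $\wh T_n = \|y\|_{\textup{out}}^2 + \sum_{P \in \cC}\big(2\Re\langle y, \wh\vphi_P * y\rangle_P - \|\wh\vphi_P * y\|_P^2\big)$.

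\emph{Type-I error (null).} Here $x \equiv 0$ on $[-2n,2n]$, so $y = \eta$, and we must show $\wh T_n \le \tfrac{5}{8}\sL_0^2$ with probability $\ge 1-\delta$. Discarding the terms $-\|\wh\vphi_P * \eta\|_P^2 \le 0$, the contribution of each $P \in \cC$ is at most $2\sigma^2 \sup_{\vphi \in \Phi_P}\Re\langle\xi, \vphi * \xi\rangle_P$, the supremum of a sub-exponential chaos over a scale-dependent polytopal ball; the proof of Theorem~\ref{th:l2con-core} bounds it via the Krein--Milman theorem and a union bound over the $\binom{2n_P+1}{s}$ sparse binary extreme points, with the $\log^2(es)$ factor entering only through the $\ell_\infty$-norm of an oversampled sparse-DFT filter. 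The outer band contributes $\|\eta\|_{\textup{out}}^2$, a $\chi^2$-type variable on $\asymp s$ degrees of freedom, hence $\lasymp \sigma^2\big(s + \log(\delta^{-1})\big)$. Apportioning $\delta$ among the $\le 2K+2$ pieces and summing (using $n_P \le n$) reproduces exactly the three groups of terms inside $\sL_0^2$: the $s\log(2n+1)$ term from the $\binom{2n_P+1}{s}$ union bounds, the linear-in-$s$ term, and the $\log(6\log_3(n/s)\,\delta^{-1})$ term (after absorbing $\log\log_3(n/s)$ into the main term). Since the constant $128\cst^2$ in $\sL_0^2$ exceeds $\tfrac{8}{5}$ times the implied constant, $\wh T_n \le \tfrac{5}{8}\sL_0^2$ on an event of probability $\ge 1-\delta$; the Type-I error is thus $\le \delta$.

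\emph{Type-II error (alternative).} Here $x$ lies in some $s$-dimensional SIS $X$ with $\|x\|_{2n,2} \ge \sL \ge \sL_0$; we must show $\wh T_n > \tfrac{5}{8}\sL_0^2$ with probability $\ge 1-\delta$. For each $P \in \cC$ of scale $n_k$, Proposition~\ref{prop:hybrid-oracle} provides a reproducing filter $\vphi^X_k \in \C_{n_k}(\Z)$ of $X$ which is feasible in the corresponding program (its $\ell_1$- and $\ell_\infty$-bounds being precisely the imposed constraints), so by optimality of $\wh\vphi_P$ and $\vphi^X_k * x = x$ we get $\|y - \wh x^\full\|_P^2 \le \|\vphi^X_k * y - y\|_P^2 = \|\vphi^X_k * \eta - \eta\|_P^2$, while $\|y - \wh x^\full\|_{\textup{out}} = 0$. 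Combining with $\|y\|_{2n,2}^2 = \|x\|_{2n,2}^2 + 2\Re\langle\eta, x\rangle_{2n} + \|\eta\|_{2n,2}^2$ and $\|\eta\|_{2n,2}^2 \ge \sum_{P \in \cC}\|\eta\|_P^2$,
\[
\wh T_n \;\ge\; \|x\|_{2n,2}^2 + 2\Re\langle\eta, x\rangle_{2n} - \sum_{P \in \cC}\Big(\|\vphi^X_k * \eta\|_P^2 - 2\Re\langle\vphi^X_k * \eta, \eta\rangle_P\Big).
\]
The key point is that $\vphi^X_k$ is \emph{nonrandom}, so each bracket is a quadratic form in $\xi$ of mean $\lasymp \cst^2 \sigma^2 s$ — the $\|\cdot\|^2$-part has mean $(2n_k+1)\|\vphi^X_k\|_2^2 \le 18\cst^2 s$ and the cross-part has mean of modulus $(2n_k+1)\,|(\vphi^X_k)_0| \le 18\cst s$ by the $\ell_1$-bound of Proposition~\ref{prop:hybrid-oracle} — with sub-exponential fluctuation governed by $\|\vphi^X_k\|_2 \lasymp \cst\sqrt{s/n_k}$ and $\sup_{|z| = 1}|\vphi^X_k(z)| \lasymp \cst$ (the latter a Marcinkiewicz--Zygmund estimate, the DFT-grid values of $\vphi^X_k$ being $\le \cst$ by the $\ell_\infty$-bound). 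Hence each bracket is $\lasymp \cst^2\sigma^2\big(s + \log(\delta^{-1})\big)$ in absolute value, \emph{without any $\log n$ factor}, and summing over $\cC$ gives a bound $\lasymp K\cst^2\sigma^2\big(s + \log(K\delta^{-1})\big)$. Using in addition the Gaussian tail $|2\Re\langle\eta, x\rangle_{2n}| \le 2\sigma\|x\|_{2n,2}\sqrt{\log(2\delta^{-1})}$ and that $t \mapsto t^2 - 2\sigma t\sqrt{\log(2\delta^{-1})}$ is increasing on $[\sL_0,\infty)$ (because $\sL_0 \gg \sigma\sqrt{\log(\delta^{-1})}$), we conclude $\wh T_n \ge \sL_0^2 - 2\sigma\sL_0\sqrt{\log(2\delta^{-1})} - O\big(K\cst^2\sigma^2(s + \log(K\delta^{-1}))\big)$; since the subtracted terms lack the $s\log(2n+1)$ and the extra $\log(es)$ present in $\sL_0^2$, and $128\cst^2$ is large enough, this exceeds $\tfrac{5}{8}\sL_0^2$ with probability $\ge 1-\delta$, so the Type-II error is $\le \delta$.

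\emph{Main obstacle.} The technical crux is the Type-I step: since $\wh\vphi_P$ is noise-dependent, one cannot avoid a uniform bound on the sub-exponential chaos $\langle\xi, \vphi * \xi\rangle_P$ over the scale-dependent body $\Phi_P$. This is exactly the estimate inside the proof of Theorem~\ref{th:l2con-core} (Krein--Milman plus a union bound over $\binom{2n_P+1}{s}$ extreme points), so reusing it is routine; the delicate part is the scale-by-scale bookkeeping — apportioning $\delta$ among the $O(\log_3(n/s))$ scales and checking that the numerical constant $128\cst^2$ in $\sL_0^2$ strictly separates the null and alternative values of $\wh T_n$ about the threshold $\tfrac{5}{8}\sL_0^2$. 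The only other non-mechanical ingredient is the fixed-filter tail bound for the oracle quadratic forms in the Type-II step, where the Marcinkiewicz--Zygmund control of $\sup_{|z|=1}|\vphi^X_k(z)|$ is what prevents a spurious $\log n$ from appearing.
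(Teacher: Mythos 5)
Your overall architecture coincides with the paper's proof: the same multiscale decomposition of~$\wh T_n$ into the core, the side intervals and the outer band, the same comparison with the nonrandom reproducing filter of Proposition~\ref{prop:hybrid-oracle} on each scale for the Type-II bound, and the same Krein--Milman-plus-union-bound control of the chaos~$\Re\langle\xi,\wh\vphi_P*\xi\rangle$ for the Type-I bound. There is, however, one concrete step in your Type-II analysis that fails as stated: you claim~$\sup_{z\in\T}|\vphi^X_k(z)|\lasymp\cst$ ``by Marcinkiewicz--Zygmund'' from the grid bound~$\|\F_{n_k}[\vphi^X_k]\|_\infty\le\cst/\sqrt{2n_k+1}$. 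Knowing the values of a Laurent polynomial of degree~$n_k$ at the~$2n_k+1$ grid points only controls its sup-norm on~$\T$ up to the Lebesgue constant, which is of order~$\log n_k$ (Lemma~\ref{lem:oversampling-linf}), not a constant; and this matters for your argument, because a~$\log n$ in the operator-norm bound would inject a~$\log n\cdot\log(\delta^{-1})$ term into the oracle quadratic-form tails, which the threshold~$\sL_0^2$ --- whose $\delta$-dependent term is only~$\log(9e^3s)\log\bigl(6\log_3(n/s)\delta^{-1}\bigr)\log(9e^4s)\log_3(n/s)$ --- does not absorb when~$\log(\delta^{-1})\gg s$.

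The correct route, which is the paper's, uses the two constraints jointly: $\cst^{-1}\sqrt{2n_k+1}\,\F_{n_k}[\vphi^X_k]$ lies in the polytopal ball~$\Comp^{2n_k+1}(18s)$, the operator norm of the convolution matrix is reduced via the circulant embedding~\eqref{eq:Q-oper-prelim} to the $\ell_\infty$-norm of the DFT on the doubled grid~$\T_{2n_k}$, and Lemma~\ref{lem:oversampling-linf-sparse} bounds that by a~$\log(9e^3s)$ factor --- neither a constant nor~$\log n$. Consequently the fixed-filter brackets are of order~$\cst^2\sigma^2\bigl(s+\log^2(9e^3s)\log(\delta^{-1})\bigr)$, cf.~\eqref{eq:detection-II-term1}--\eqref{eq:detection-II-term2}, rather than your claimed~$s+\log(\delta^{-1})$; the threshold is built precisely to accommodate these $\log^2(es)$-weighted confidence terms, so your conclusion survives once this step is repaired, but the repair requires the sparse-oversampling lemma, not an MZ-type inequality. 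A smaller presentational point: the extreme points of the complex feasible set carry arbitrary unit-modulus phases (Proposition~\ref{prop:cap-comp}), so the Type-I union bound over the~$\binom{2n_P+1}{18s}$ supports must be preceded by a phase discretization (the paper's $\sqrt2$-rounding to~$\{\pm1,\pm i\}$); since you reuse the already-established uniform bound this is not a gap, merely a detail hidden in your citation.
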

%
\begin{remark}
{\em The proof of Theorem~\ref{th:detection}, given in the next section, is similar to the combined proof of Theorem~\ref{th:l2con-core} and~Corollary~\ref{cor:l2-full}, because the various error terms that must be handled in the proof of Theorem~\ref{th:l2con-core} also appear in the decomposition of the test statistic. More precisely, the Type I error occurs due to the pure-noise convolution term~$\langle \xi, \wh \vphi * \xi \rangle_n$, while the remaining (signal-dependent) terms contribute to the Type II error. By the same arguments as those proving Theorem~\ref{th:l2con-core} and~Corollary~\ref{cor:l2-full}, the test statistic is bounded as~$O(\sL_0^2)$ under~$\Hyp_{0,n}$ and~$O(\sL_0^2 + \sL^2)$ under~$\Hyp_{s,n}(\sL)$; this allows to distinguish between the two hypotheses when~$\sL \gasymp \sL_0$.}
\end{remark}

\begin{remark}
{\em The detection boundary~$\sL = \sL_0$ established in Theorem~\ref{th:detection} is known to be near-optimal, up to a logarithmic factor, for sparse regression in the asymptotic highly sparse regime, namely when~$s,n \to \infty$ with~$s \asymp d^{\alpha}$ for~$\alpha \in (0, 1/2)$, where~$d$ is the ambient dimension~\cite{ingster2010detection}; this translates to~$s \sim n^{\alpha}$, in the denoising version of our problem, i.e.~when the condition~$x \in \XX_s$ in the definition of~$\Hyp_{s,n}(\sL)$, cf.~\eqref{def:hyp-alt}, is replaced with~$x \in \XX(\T_{2n}^{(s)})$. 
On the other hand, in the ``moderately sparse'' regime ($\half \le \alpha < 1$) the asymptotic detection boundary is known to be~$\sL^2 \asymp s^{1/2} (\asymp \sL_0)$, cf.~\cite{ingster2010detection}, i.e.~detection is possible at low signal-to-noise ratios. We anticipate that Theorem~\ref{th:detection} can also be extended in this direction, but we have not attempted to do this.}
\end{remark}

\section{Proofs of statistical results}
\label{sec:stat-proofs}

\paragraph{Preliminaries for proving Theorems~\ref{th:l2con-core} and~\ref{th:detection}.}
Let~$\wh\vphi = \wh\vphi^{(0)} \in \C_{n}(\Z)$ be optimal in~\eqref{opt:l2con-core}, and let~$\wh\vphi^{(k,\pm)} \in \C_{n_k}(\Z)$, for~$k \in \{1,.., K\}$ and~$n_k = n3^{-k}$, be as in~\eqref{eq:side-filters}; in other words,
\[
\wh x_t^\full = (\wh \vphi^{(k,\pm)} * y)_t^{\vphantom\full} \;\;\text{for}\;\; t \in I^{(k,\pm)}.
\]
Finally, let~$\vphi^{(0)} \in \C_{n}(\Z)$ and~$\vphi^{(k,\pm)} \in \C_{n_k}(\Z)$, for~$k \in \{1,.., K\}$, be feasible solutions to~\eqref{opt:l2con-core} and~\eqref{eq:side-filters} respectively, as exhibited in Proposition~\ref{prop:hybrid-oracle}. 
That is, letting~$X$ be the shift-invariant subspace containing~$x$ with~$\dim(X) = s$,~$\vphi^{(k,\pm)}$ are reproducing for~$X$ and satisfy 
\begin{equation}
\label{eq:side-filter-norms}
\|\F_{n_k} [\vphi^{(k,\pm)}]\|_1 \le \frac{\odima{2s}}{\sqrt{2n_k+1}}, \quad
\|\vphi^{(k,\pm)}\|_2 \le \frac{\odima{\sqrt{2s}}}{\sqrt{2n_k+1}}, \quad
\|\F_{n_k} [\vphi^{(k,\pm)}]\|_\infty \le \frac{\odima{1}}{\sqrt{2n_k+1}};
\end{equation}
similarly for~$\vphi^{(0)} = \vphi^\star$ with~$n_0 = n$. 
We find it somewhat more convenient to prove Theorem~\ref{th:detection} first, and subsequently recycle the estimates of various error terms in the proof of Theorem~\ref{th:l2con-core}.

\subsection{Proof of Theorem~\ref{th:detection}}
\textbf{\em Type II error.}
\proofstep{1}.
Consider the following decomposition of the test statistic:
\begin{equation}
\label{eq:detection-II-split}
\begin{aligned}
\wh T_n(y) 
&= \|y\|_{2n,2}^2 - \|y - \wh x^\full\|_{2n-3s,2}^2 \\
&= \|y\|_{2n-3s,2}^2 - \|y - \wh x^\full\|_{2n-3s,2}^2 + \|y\|_{2n,2}^2 - \|y\|_{2n-3s,2}^2 \\
&= \underbrace{\|y\|_{n,2}^2 - \big\|y - \wh \vphi^{(0)} * y \big\|_{n,2}^2}_{\wh T_n^{(0)}(y)}
	+ \sum_{k = 1}^K \underbrace{\|\Delta^{\mp h_k}[y]\|_{n_k,2}^2 - \big\|\Delta^{\mp h_k}[y - \wh \vphi^{(k,\pm)} * y] \big\|_{n_k,2}^2}_{\wh T_n^{(k,\pm)}(y)} \\ 
&\hphantom{= \|y\|_{2n-3s,2}^2 - \|y - \wh x^\full\|_{2n-3s,2}^2\;}
	+ \|y\|_{2n,2}^2 - \|y\|_{2n-3s,2}^2.
\end{aligned}
\end{equation}
Here~$\wh T_n^{(k,+)}$ or~$\wh T_n^{(k,-)}$ examines the respective~$I^{(k,+)}$ or~$I^{(k,-)}$, idem for~$\wh T_n^{(0)}$ and~$I^{(0)} := [-n,n]$; the term~$\|y\|_{2n,2}^2 - \|y\|_{2n-3s,2}^2$ appears as~$\wh x_t^\full = y_t^{\vphantom{\full}}$ for~$t: |t| > 2n-3s$.
Focusing on~$I^{(0)}$ w.l.o.g.,
\begin{align}
\wh T_n^{(0)}(y) 
\ge T_n^{(0)}(y) 
&:=  \|y\|_{n,2}^2  - \|y - \vphi^{(0)} * y \|_{n,2}^2  \notag \\
&\;= \|y\|_{n,2}^2 - \sigma^2 \|\xi - \vphi^{(0)} * \xi \|_{n,2}^2 \notag \\
&\;= \|x\|_{n,2}^2 - \sigma^2 \|\vphi^{(0)} * \xi\|_{n,2}^2  + 2 \sigma^2 \Re \langle \xi, \vphi^{(0)} * \xi \rangle_n +  2\sigma \Re \langle \xi, x \rangle_{n}
\label{eq:detection-II-decomp}
\end{align}
where we first used that~$\vphi^{(0)}$ is feasible and reproducing for~$X$, and then expanded the squares.

\noindent\proofstep{2}.
Define the matrix-valued map~$\Filt_{p,n}: \C(\Z) \to \C^{(2p+1) \times (2p+2n+1)}$ encoding the convolution,
\begin{align}
\label{eq:convo-matrix}
\Filt_{p,n}(\vphi) 
&:= 
\begin{pmatrix}
\vphi_{-n}\; & \cdots & \cdots &  \cdots & \vphi_{n} & 0 & \cdots & \cdots & \cdots & 0\\
0 & \vphi_{-n} & \cdots & \cdots & \cdots & \vphi_{n} & 0 & \cdots & \cdots & 0\\
\vdots & \ddots & \ddots & \hphantom{\cdots} & \hphantom{\cdots} & \hphantom{\cdots} & \ddots & \ddots & \hphantom{\cdots} & \vdots \\
\vdots & \hphantom{\cdots} & \ddots & \ddots & \hphantom{\cdots} & \hphantom{\cdots}  & \hphantom{\cdots} & \ddots & \ddots & \vdots \\
0 & \cdots & \cdots & 0 & \vphi_{-n} & \cdots & \cdots & \cdots & \vphi_{n} & 0\\
0 & \cdots & \cdots & \cdots & 0 & \vphi_{-n} & \cdots & \cdots & \cdots & \vphi_{n}
\end{pmatrix},
\end{align}
and its counterpart~$\Circ_{p,n}: \C(\Z) \to \C^{(2p+2n+1) \times (2p+2n+1)}$ encoding the circular convolution,
\begin{align}
\label{eq:circ-convo-matrix}
\Circ_{p,n}(\vphi) 
&:= 
\begin{pmatrix}
\begin{matrix}
\vphi_{0} & \cdots & \cdots & \vphi_{n} & 0 & \cdots & 0 & \vphi_{-n} & \cdots & \vphi_{-1}\\
\vdots & \ddots & \hphantom{\cdots} & \hphantom{\cdots} & \ddots & \ddots & \hphantom{\cdots} & \ddots & \ddots & \vdots\\
\vphi_{-n+1} & \cdots & \vphi_{0} & \cdots & \cdots & \vphi_{n} & 0 & \cdots & 0 & \vphi_{-n}\\
\end{matrix}\\
\hline
\\
\Filt_{p,n}(\vphi)\\
\\
\hline
\begin{matrix}
\quad\vphi_{n} & 0 & \cdots & 0 & \vphi_{-n} & \cdots & \cdots & \vphi_0 & \cdots  & \vphi_{n-1}\\
\quad\vdots & \ddots & \ddots & \hphantom{\cdots} & \ddots & \ddots & \hphantom{\cdots} & \hphantom{\cdots} & \ddots & \vdots\\
\quad\vphi_{1} & \cdots & \vphi_{n} & 0 & \cdots & 0 & \vphi_{-n} & \cdots & \cdots & \vphi_{0}\\
\end{matrix}\\
\end{pmatrix}.
\end{align}
Also, define the ``slicing'' map~$\{\cdot\}_n: \C(\Z) \to \C^{2n+1}$ by~$\{x\}_n := ( x_{-n}, \; \cdots \;  x_{n} )^\top$. 
For~$\vphi \in \C_n(\Z)$,
\begin{equation}
\label{eq:conv-matrix-form}
\{\vphi * \xi\}_{n} = \Filt_{n,n}(\vphi) \, \{ \xi \}_{2n},
\end{equation}
whence for the negative term in~\eqref{eq:detection-II-decomp} we get
\begin{align}
\label{eq:Q-matrix}
\big\|\vphi^{(0)} * \xi \big\|_{n,2}^2 
	&= \{\xi\}_{2n}^\htop Q^{(0)} \{\xi\}_{2n}^{\vphantom\htop}
\quad\text{with}\quad
Q^{(0)} 
	:= \Filt_{n,n}(\vphi^{(0)})^\htop  \Filt_{n,n}(\vphi^{(0)}).
\end{align}
Here the matrix~$Q^{(0)}$ is positive-semidefinite, and we can explicitly compute its trace:
\begin{equation}
\label{eq:Q-trace}
\tr (Q^{(0)} ) = \|\Filt_{n,n}(\vphi^{(0)})\|_{\sF}^2 
= (2n+1)\|\vphi^{(0)}\|_2^2 
\stackrel{\eqref{eq:side-filter-norms}}{\le} \odima{2s}.
\end{equation}
By the DFT diagonalization property,~$\Circ_{p,n}(\vphi)$ with~$\vphi \in \C_n(\Z)$ is unitary equivalent to the diagonal matrix~$\sqrt{2p+2n+1}\, \Diag(\F_{p+n}[\vphi])$. 
As~$\Filt_{n,n}(\vphi)$ is a submatrix of~$\Circ_{n,n}(\vphi)$, we get
\begin{align}
\label{eq:Q-oper-prelim}
\big\| Q^{(0)} \big\|_{\op} 
\le \| \Filt_{n,n}(\vphi^{(0)}) \|_{\op}^2
\le \| \Circ_{n,n}(\vphi^{(0)}) \|_{\op}^2
= (4n+1) \|\F_{2n}[\vphi^{(0)}]\|_{\infty}^2.
\end{align}
By~\eqref{eq:side-filter-norms},~$\sqrt{2n+1} \F_n[\vphi^{(0)}]$ belongs to the set~$\Comp^{2n+1}(\odima{2s})$, see Appendix \ref{apx:extreme-points}, for which
\[
\max_{w \in \Comp^{2n+1}(\odima{2s})} \| \F_{2n}^{\vphantom\pinv}[\F_n^{\,-1}[w]] \|_\infty
\le \sqrt{\frac{2n+1}{4n+1}} (\log\odima{s} + 3)
= \sqrt{\frac{2n+1}{4n+1}} \log(e^3 \odima{s}).
\]
When combined with~\eqref{eq:Q-oper-prelim} this gives
\vspace{-0.1cm}\begin{align}
\label{eq:Q-oper}
\big\| Q^{(0)} \big\|_{\op} 
\le \log^2(e^3\odima{s}),
\end{align}
whence for the Frobenius norm (cf.~\eqref{eq:Q-trace}):
\begin{equation}
\label{eq:Q-frob}
\big\| Q^{(0)} \big\|_{\sF}
\le \sqrt{\big\|Q^{(0)} \big \|_{\op} \tr \big(Q^{(0)} \big)}
\le \sqrt{2s}\log(e^3\odima{s}).
\end{equation}
Now, recall the deviation bound for an indefinite Hermitian form of a complex Gaussian vector,
\begin{equation}
\label{eq:quad-form-indef}
\Prob \big\{ \half \xi^\htop Q \xi \le \tr(Q) + \| Q \|_{\sF} \sqrt{2 \log(\delta^{-1})} + \|Q\|_{\op} \log(\delta^{-1}) \big\} \ge 1-\delta.
\end{equation}
(It can be easily verified by combining the right and left tail bounds for~$\chi_2^2$ from~\cite[Lem.~1]{lama2000}.)
Plugging~\eqref{eq:Q-matrix}--\eqref{eq:Q-trace} and~\eqref{eq:Q-oper}--\eqref{eq:Q-frob} into~\eqref{eq:quad-form-indef} we conclude that with probability at least~$1-\delta$,
\begin{align}
-\sigma^2 \big\|\vphi^{(0)} * \xi \big\|_{n,2}^2 
&\ge -2\sigma^2 \big(\odima{2s} + 2\log(e^3\odima{s})\sqrt{s \log(\delta^{-1})} + \log^2(e^3\odima{s})\log(\delta^{-1}) \big) \notag\\
&\ge -2\sigma^2 \left(\odima{3s} + \odima{2}\log^2(e^3 \odima{s})\log(\delta^{-1}) \right).
\label{eq:detection-II-term1}
\end{align}

\noindent\proofstep{3}. We can now proceed similarly to estimate the next term in~\eqref{eq:detection-II-decomp}.
To this end, we note that
\begin{align}
\label{eq:S-matrix}
2\Re \langle \xi, \vphi^{(0)} * \xi \rangle_n 
	&= \{\xi\}_{2n}^\htop S^{(0)} \{\xi\}_{2n}^{\vphantom\htop}
\,\quad\text{with}\quad
S^{(0)} 
	:= 
\begin{bmatrix}
\mathsf{O}_{n,4n+1}\\
\Filt_{n,n}(\vphi^{(0)})\\
\mathsf{O}_{n,4n+1}
\end{bmatrix}
+ 
\begin{bmatrix}
\mathsf{O}_{n,4n+1}\\
\Filt_{n,n}(\vphi^{(0)})\\
\mathsf{O}_{n,4n+1}
\end{bmatrix}^\htop
\end{align}
where~$\Filt_{n,n}(\vphi)$ is given by~\eqref{eq:convo-matrix} and~$\mathsf{O}_{a,b}$ is the~$a \times b$ zero matrix. 
Note that~$S^{(0)}$ is Hermitian, but not necessarily positive-semidefinite. We first estimate its Frobenius and operator norms:
\vspace{-0.1cm}\begin{align}
\label{eq:S-frob}
\big\|S^{(0)}\big\|_{\sF}\,\,
\le 2 \|\Filt_{n,n}(\vphi^{(0)})\|_{\sF}\,\,
&\stackrel{\eqref{eq:Q-trace}}{\le} \odima{2\sqrt{2s}},\\
%
\big\|S^{(0)}\big\|_{\op}
\le 2 \| \Filt_{n,n}(\vphi^{(0)}) \|_{\op}
&\stackrel{\eqref{eq:Q-oper}}{\le} \odima{2}\log(e^3\odima{s}).
\end{align}
Meanwhile,~$\tr(S^{(0)}) = 2(2n+1)\Re[\vphi^{(0)}]_0$ and, letting~$\veps^{0} \in \C(\Z)$ be the discrete unit pulse,
\[
|\vphi_0| = |\langle \veps^0, \vphi \rangle_{n}| \le \|\F_{n}[\veps^0]\|_\infty \, \|\F_{n}[\vphi]\|_1 =  (2n+1)^{-1/2} \|\F_{n}[\vphi]\|_1 \quad \text{for any} \;\; \vphi \in \C_n(\Z).
\]
As the result,
\begin{equation}
\label{eq:S-trace}
|\tr(S^{(0)})| 
\le 
	2\sqrt{2n+1} \big\|\F_{n}\big[\vphi^{(0)}\big]\big\|_1
\stackrel{\eqref{eq:side-filter-norms}}{\le} 
\odima{4}s.
\end{equation}
Plugging~\eqref{eq:S-matrix}--\eqref{eq:S-trace} into~\eqref{eq:quad-form-indef} we conclude that with probability~$\ge 1-\delta$,
\begin{align}
2 \sigma^2 \Re \langle \xi, \vphi^{(0)} * \xi \rangle_n 
&\ge -2\sigma^2 \big(\odima{4s} + \odima{4}\sqrt{s\log(\delta^{-1})} + \odima{2}\log(e^3 s)\log(\delta^{-1}) \big) \notag\\
&\ge -\odima{4}\sigma^2 \big(\odima{3s} + \odima{2}\log(e^3 s)\log(\delta^{-1}) \big).
\label{eq:detection-II-term2}
\end{align}
Finally, observe that~$\Re \langle \xi, x \rangle_n \sim \cN(0,\|x\|_{n,2}^2)$ which implies that with probability at least~$ 1-\delta$,
\begin{align}
2\sigma \Re \langle \xi, x \rangle_n 
\ge -2\sigma \|x\|_{n,2}  \sqrt{2\log(\delta^{-1})}
\ge -\frac{1}{8}\|x\|_{n,2}^2 - 16\sigma^2 \log(\delta^{-1}).
\label{eq:detection-II-term3}
\end{align}
Plugging~\eqref{eq:detection-II-term1},~\eqref{eq:detection-II-term2}, and~\eqref{eq:detection-II-term3} into~\eqref{eq:detection-II-decomp}, we now conclude that, with probability at least~$1-\delta$,
\begin{equation}
\wh T_n^{(0)}(y) \ge \frac{7}{8} \|x\|_{n,2}^2 - \odima{2}\sigma^2 \big(\odima{9s} + \odima{14}\log^2(e^3 \odima{s})\log(3\delta^{-1})\big).
\label{eq:detection-II-core}
\end{equation}

\noindent\proofstep{4}.
Of course, the exact same bound applies, separately, for each~$\wh T_n^{(k,\pm)}$,~$k \in \{1, ..., K\}$.
Finally, applying the deviation bound~\eqref{eq:quad-form-indef} to~$\|\xi\|_{2n,2}^2 - \|\xi\|_{2n-3s,2}^2 \sim \chi^2_{12s}$, with probability at least~$1-\delta$
\begin{align}
\label{eq:detection-I-edge}
\|\xi\|_{2n,2}^2 - \|\xi\|_{2n-3s,2}^2 
&\le 12 s + 4\sqrt{3s\log(\delta^{-1})} + 2\log(\delta^{-1})
\le \odima{15 s} + \odima{6}\log(\delta^{-1}).
\end{align}
By Cauchy-Schwarz, with the same probability it holds that
\begin{align}
\label{eq:detection-II-edge}
\|y\|_{2n,2}^2 - \|y\|_{2n-3s,2}^2 
&\ge \frac{7}{8} \big( \|x\|_{2n,2}^2 - \|x\|_{2n-3s,2}^2 \big) 
		-7 \sigma^2 \big(15 s + 6\log(\delta^{-1}) \big).
\end{align}
Combining~\eqref{eq:detection-II-core} and~\eqref{eq:detection-II-edge} with~\eqref{eq:detection-II-split} and invoking the union bound, we get with prob.~$\ge 1-\delta$:
\[
\begin{aligned}
\wh T_n(y)  
&\ge 
	\frac{7}{8}\sL^2
-2(K+1) \sigma^2 \left(123s + 72\log^2(e^3 s) \log(6(K+1)\delta^{-1}) \right) \\
&\ge
   \frac{7}{8}\sL_0^2
-144(K+1) \sigma^2 \left(2s + \log^2(e^3 s) \log(6(K+1)\delta^{-1}) \right)
\ge
	\frac{7}{8}\sL_0^2 - \odima{\frac{1}{2}\sL_0^2}
= 	\odima{\frac{3}{8}\sL_0^2}.
\end{aligned}
\]

\textbf{\em Type I error.}
Under the null hypothesis and upon factoring out~$\sigma^2$, identity~\eqref{eq:detection-II-split} becomes
\begin{equation}
\label{eq:detection-I-split}
\begin{aligned}
\wh T_n^{\vphantom{(0)}}(\xi) 
= \wh T_n^{(0)}(\xi) + \left( \sum_{k = 1}^K \wh T_n^{(k,\pm)}(\xi) \right) + \|\xi\|_{2n,2}^2 - \|\xi\|_{2n-3s,2}^2.
\end{aligned}
\end{equation}
The term~$\|\xi\|_{2n,2}^2 - \|\xi\|_{2n-3s,2}^2 \sim \chi^2_{12s}$ has already been bounded in~\eqref{eq:detection-I-edge}. 
As for the first term,
\begin{align}
\wh T_n^{(0)}(\xi)
= \|\xi\|_{n,2}^2 - \|\xi - \wh\vphi^{(0)} * \xi \|_{n,2}^2 
\le 2 \Re \langle \xi, \wh\vphi^{(0)} * \xi \rangle_n.
\label{eq:detection-II-term}
\end{align}
Note that since~$\wh \vphi^{(0)}$ is random, we cannot control the right-hand side by recycling the argument from~(\proofstep{3}). 
Instead, we are going to bound the supremum of the random process indexed on the feasible set~$\Phi$ of~\eqref{opt:l2con-core}. 
As shown in Proposition~\ref{prop:cap-comp}, all extreme points of~$\Phi$ are contained in
\begin{equation}
\label{eq:extreme-points-l1-linf}
\Ext_{s,n} := \left\{ \frac{\odima{1}}{\sqrt{2n+1}} \F_n^{\,-1} \left[\sum_{k = 1}^{\odima{2s}} z_k e^{j_k} \right]: \; \{ j_1, ..., j_{\odima{2s}} \} \subset \{0,1,...,2n\}, \; (z_1, ..., z_{\odima{2s}}) \in \T^{\odima{2s}} \right\}
\end{equation}
where~$e^0, ..., e^{2n}$ is the canonical basis of~$\R^{2n+1}$; that is, the DFT~$w = \F_n[\vphi]$ of an extremal~$\vphi$ is an~$\odima{2s}$-sparse vector with nonzero entries of constant norm.
Now, let~$\cJ := \{j_1, ..., j_{\odima{2s}}\}$ and~$\vec z := (z_1, ..., z_{\odima{2s}})$, and let~$\vphi_{\cJ, \vec z}$ be the corresponding element of~$\Ext_{s,n}.$
Then, following~\eqref{eq:S-matrix}, 
\begin{align*}
2\Re \langle \xi, \vphi_{\cJ, \vec z} * \xi \rangle_n 
	&= \{\xi\}_{2n}^\htop S_{\cJ, \vec z} \{\xi\}_{2n}^{\vphantom\htop}
\,\quad\text{with}\quad
S_{\cJ, \vec z}
	:= 
\begin{bmatrix}
\mathsf{O}_{n,4n+1}\\
\Filt_{n,n}(\vphi_{\cJ, \vec z})\\
\mathsf{O}_{n,4n+1}
\end{bmatrix}
+ 
\begin{bmatrix}
\mathsf{O}_{n,4n+1}\\
\Filt_{n,n}(\vphi_{\cJ, \vec z})\\
\mathsf{O}_{n,4n+1}
\end{bmatrix}^\htop.
\end{align*}
Moreover, we can bound~$\|S_{\cJ,\vec z}\|_{\sF}$,~$\|S_{\cJ,\vec z}\|_{\op}$, and~$\tr(S_{\cJ,\vec z})$ by proceeding as in~\eqref{eq:S-frob}--\eqref{eq:S-trace}; this gives
\[
\begin{aligned}
\big\|S_{\cJ,\vec z}\big\|_{\sF}\,\,
&\le 2 \|\Filt_{n,n}(\vphi_{\cJ,\vec z})\|_{\sF}
= \odima{2\sqrt{2s}},\\
\big\|S_{\cJ,\vec z}\big\|_{\op}
&\le 2 \| \Filt_{n,n}(\vphi_{\cJ,\vec z}) \|_{\op}
\le \odima{2\log(e^3 s)},\\
|\tr(S_{\cJ,\vec z})| 
&\le 
	2\sqrt{2n+1} \|\F_{n}[\vphi_{\cJ,\vec z}]\|_1
=	\odima{4s}.
\end{aligned}
\]
As the result, any (fixed)~$\vphi_{\cJ, \vec z}$ with prob.~$\ge 1-\delta$ satisfes the same inequality as in~\eqref{eq:detection-II-term2}, namely
\begin{equation}
\label{eq:detection-II-term2-one-corner}
2 \Re \langle \xi, \vphi_{\cJ, \vec z} * \xi \rangle_n 
\le 4 \big(3s + 2\log(e^3 s)\log(\delta^{-1}) \big).
\end{equation}
Now, in order to get supremum over~$\vphi_{\cJ, \vec z} \in \Ext_{s,n}$ let us first rid of the phase vector~$\vec z$ in~$\vphi_{\cJ,\vec z}$. 
To this end, note that for any~$z \in \T$ and~$0 \le e^{j} \le 2n$, 
\begin{align*}
\Re \langle \xi, \F_n^{\,-1}[z e^{j}] * \xi \rangle_n 
&= \Re (z\langle \xi, \F_n^{\,-1}[e^{j}] * \xi \rangle_n ) \notag\\
&\le | \langle \xi, \F_n^{\,-1}[e^{j}] * \xi \rangle_n | \notag\\
&\le \sqrt{2} \max \big\{ | \Re \langle \xi, \F_n^{\,-1}[e^{j}] * \xi \rangle_n |,\; | \Im \langle \xi, \F_n^{\,-1}[e^{j}] * \xi \rangle_n | \big\} \notag\\
&= \sqrt{2} \max_{w \in \{\pm 1,\pm i \}}
				\Re \langle \xi, \F_n^{\,-1}[w e^{j}] * \xi \rangle_n.
\end{align*}
Whence for any~$\vec z \in \T^{\odima{2s}}$ and~$\cJ \subset \{0,...,2n\}$ with~$|\cJ| = \odima{2s}$, 
\begin{align}
\Re \langle \xi, \vphi_{\cJ, \vec z} * \xi \rangle_n 
&\le \sqrt{2} \sum_{k = 1}^{\odima{2s}} \max_{w_k \in \left\{\pm 1, \pm i\right\}}
	\Re \langle \xi, \F_n^{\,-1}[w_k e^{j_k}] * \xi \rangle_n 
=
	\sqrt{2} \max_{\vec w \in \left\{\pm 1, \pm i\right\}^{\odima{2s}}}
				\Re \langle \xi, \vphi_{\cJ,\vec w} * \xi \rangle_n.				
\label{eq:phase-matching}
\end{align}
The total number of~$\vphi_{\cJ,\vec w}$'s arising in the right-hand side of~\eqref{eq:phase-matching} is~$2^{\odima{4s}} N_s$, where~$N_s := {2n+1 \choose {\odima{2s}}}$. 
As such, by combining~\eqref{eq:detection-II-term2-one-corner}--\eqref{eq:phase-matching} and invoking the Krein-Milman theorem~(\cite[Ch.~8]{simon2011convexity}), we get
\begin{align}
\label{eq:rand-convo-term-bound}
2\Re \langle \xi, \wh \vphi^{(0)} * \xi \rangle_n 
&\le 4\sqrt{2} \left(3s + 2\log(e^3 s)\log\left(2^{4s} N_s\delta^{-1}\right) \right) \notag\\
&\le \odima{8}\sqrt{2} \log(e^4 s)\left(6s + \log(N_s\delta^{-1})\right)
\end{align}
with probability at least~$1-\delta$. Repeating the above argument for~$\wh T_n^{(k,\pm)}$, taking note of~\eqref{eq:detection-I-edge}, and plugging the result in~\eqref{eq:detection-I-split}, we obtain the following chain of inequalities with prob.~$\ge 1-\delta$:
\begin{align}
\wh T_n(y) 
&\le 
\odima{16}(K+1)\sqrt{2} \sigma^2  \log(e^4 s) \left(\odima{6s} +\log(2(K+1)N_s\delta^{-1})\right) \notag\\
&\le 
16(K+1) \sqrt{2} \sigma^2  \log(e^4 s) \left(6s + \odima{2s}\log(2n+1) + \log(2(K+1)\delta^{-1}) \right)\\
&\le \odima{\frac{48\sqrt{2}}{300}\sL_0^2}
< \odima{\frac{1}{4}}\sL_0^2.
\tag*{\qedhere}
\end{align}
We are done since, from the Type II error analysis,~$\wh \T_n(y) \ge \odima{\frac{3}{8}} \sL_0^2$ under~$\Hyp_{s,n}(\sL)$ w.p.~$\ge 1-\delta$.
\qed

\subsection{Proof of Theorem~\ref{th:l2con-core}}

\noindent\proofstep{1}.
We start with the same decomposition as in the proof of~\cite[Thm.~1]{harchaoui2019adaptive}, reminiscent of~\eqref{eq:detection-II-decomp}:
\begin{align}
\label{eq:l2con-error-decomp-1}
\|x - \wh\vphi * y \|_{n,2}^2 
&= 
\|y - \wh\vphi * y \|_{n,2}^2 - \sigma^2 \|\xi\|_{n,2}^2 - 2 \sigma \Re \langle \xi, x - \wh\vphi * y \rangle_n \notag\\
&\stackrel{}{\le}  \|y - \vphi^\star * y \|_{n,2}^2  - \sigma^2 \|\xi\|_{n,2}^2 - 2 \sigma \Re \langle \xi, x - \wh\vphi * y \rangle_n \notag\\
&= \|x - \vphi^\star * y \|_{n,2}^2 + 2 \sigma \Re \langle \xi, x - \vphi^\star * y \rangle_n - 2 \sigma \Re \langle \xi, x - \wh\vphi * y \rangle_n \notag\\
&\stackrel{}{=} \sigma^2\|\vphi^\star * \xi \|_{n,2}^2 - 2 \sigma^2 \Re \langle \xi, \vphi^\star * \xi \rangle_n - 2 \sigma \Re \langle \xi, x - \wh\vphi * y \rangle_n.
\end{align}
Here, the inequality holds due to~$\vphi^\star$ being feasible in~\eqref{opt:l2con-core}; the last identity is by the reproducing property of~$\vphi^\star$.
Now, for the first and second term in the right-hand side of~\eqref{eq:l2con-error-decomp-1} we can reuse, respectively, the estimates~\eqref{eq:detection-II-term1} and~\eqref{eq:detection-II-term2}.
On the other hand, let~$T_n: \C(\Z) \to \C_n(\Z)$ be the truncation operator that zeroes out~$x_\tau$ whenever~$\tau \notin [n]_{\pm}$; finally, let~$\Pi_n: \C_n(\Z) \to \C_n(\Z)$ be the Euclidean projector on the subspace~$X_n := T_n(X)$ of~$\C_n(\Z)$. 
%
Then for the last term in~\eqref{eq:l2con-error-decomp-1},
\begin{align}
\label{eq:l2con-error-decomp-2}
-2 \sigma \Re &\langle \xi, x - \wh\vphi * y \rangle_n \notag\\
&\stackrel{\hphantom{(a)}}{=}
2 \sigma^2 \Re \langle \xi, \wh\vphi * \xi \rangle_n - 2 \sigma \Re \langle T_n \xi, T_n [x - \wh\vphi * x] \rangle \notag\\
&\stackrel{(a)}{=}
2 \sigma^2 \Re \langle \xi, \wh\vphi * \xi \rangle_n - 2 \sigma \Re \langle \Pi_n T_n \xi, T_n [x - \wh\vphi * x] \rangle \notag\\
&\stackrel{(b)}{\le}
2 \sigma^2 \Re \langle \xi, \wh\vphi * \xi \rangle_n + 4\sigma^2 \| \Pi_n T_n \xi \|_{2}^2 + \tfrac{1}{4}\| x - \wh\vphi * x \|_{n,2}^2 \notag\\
&\stackrel{(c)}{\le}
2 \sigma^2 \Re \langle \xi, \wh\vphi * \xi \rangle_n + 4\sigma^2 \| \Pi_n T_n \xi \|_{2}^2 + \tfrac{1}{2} \| x - \wh\vphi * y \|_{n,2}^2 + \tfrac{\sigma^2}{2} \| \wh\vphi * \xi\|_{n,2}^2. \notag\\
&\stackrel{(d)}{\le}
2 \sigma^2 \Re \langle \xi, \wh\vphi * \xi \rangle_n + 4\sigma^2 \| \Pi_n T_n \xi \|_{2}^2 + \tfrac{1}{2} \| x - \wh\vphi * y \|_{n,2}^2 + \tfrac{\sigma^2}{2} \| \wh\vphi * \xi \|_{n,2}^2. 
\end{align}
Here, in~$(a)$ we used that~$x - \wh \vphi * x = x - \sum_{k \in [n]_{\pm}} \wh \vphi_k \Delta^k x \in X$ by the shift-invariance of~$X$, thus~$T_n [x - \wh \vphi * x] \in X_n$; 
in~$(b)$ and~$(c)$ we used Cauchy-Schwarz; in~$(d)$ we used nonexpansiveness of projection.
Combining~\eqref{eq:l2con-error-decomp-1}--\eqref{eq:l2con-error-decomp-2} we arrive at the decomposition
\begin{equation}
\begin{aligned}
\half\|x - \wh\vphi * y \|_{n,2}^2  
&\le \sigma^2\|\vphi^\star * \xi \|_{n,2}^2 - 2 \sigma^2 \Re \langle \xi, \vphi^\star * \xi \rangle_n 
	+ 2 \sigma^2 \Re \langle \xi, \wh\vphi * \xi \rangle_n + 4\sigma^2 \| \Pi_n T_n \xi \|_{2}^2 \\
	&\quad+ \tfrac{\sigma^2}{2} \| \wh\vphi * \xi \|_{n,2}^2.
\end{aligned}
\label{eq:l2con-error-decomp-total}
\end{equation}
Observe that in the right-hand side of~\eqref{eq:l2con-error-decomp-total}, the first three terms have already been estimated in the proof of Theorem~\ref{th:detection}; cf.~\eqref{eq:detection-II-term1},~\eqref{eq:detection-II-term2},~\eqref{eq:rand-convo-term-bound}, and the last term is easy to estimate: since~$\| \Pi_n T_n \xi \|_{2}^2 \sim \chi_{2 \dim(X_n)}^2$ where~$\dim(X_n) \le s$, by~\eqref{eq:quad-form-indef} we get that, w.p.~$\ge 1-\delta$,
\begin{equation}
\label{eq:l2con-chi-square}
\| \Pi_n T_n \xi \|_{2}^2 
\le 2s + 2\sqrt{s\log(\delta^{-1})} + \log(\delta^{-1})
\le 3s + 2\log(\delta^{-1}).
\end{equation}

\noindent\proofstep{2}.
To estimate the last term in~\eqref{eq:l2con-error-decomp-total}, we take a route similar to the one pursued to deal with the term~$\lang \xi, \wh \vphi * \xi \rang_n$ in the Type I error analysis of Theorem~\ref{th:detection}. 
By the Krein-Milman theorem,
\begin{align}
\label{eq:krein-milman-quad}
\| \wh\vphi * \xi \|_{n,2}^2
\le \sup_{\vphi \in \Ext_{s,n}} \| \vphi * \xi \|_{n,2}^2
\end{align}
where~$\Ext_{s,n}$, defined in~\eqref{eq:extreme-points-l1-linf}, is the set containing all extremal points of the feasible set in~\eqref{opt:l2con-core}.
Now, fix~$\vphi_{\cJ, \vec z} \in \Ext_{s,n}$ corresponding to some selection of indices~$\cJ := \{j_1, ..., j_{\odima{2s}}\} \subset \{0,1, ..., 2n\}$ and phase factors~$\vec z := (z_1, ..., z_{\odima{2s}}) \in \T^{\odima{2s}}$.
Following~\eqref{eq:convo-matrix}--\eqref{eq:Q-matrix} we note that, for any~$\vphi \in \C_n(\Z)$,
\begin{align}
\| \vphi * \xi \|_{n,2}^2 
= \| \Filt_{n,n}^{\vphantom\htop}(\vphi) \{\xi\}_{2n}^{\vphantom\htop} \|_{2}^2 
\le \| \Circ_{n,n}^{\vphantom\htop}(\vphi) \{\xi\}_{2n}^{\vphantom\htop} \|_{2}^2 
= (4n+1)\,\| \Diag(\F_{2n}[\vphi]) \F_{2n}[\xi] \|_{2}^2. 
\label{eq:conv-to-circ}
\end{align}
Furthermore, for~$\vphi = \F_n^{-1}[z e^j]$ with~$z \in \T$ and~$j \in \{0, 1, ..., 2n\}$, the entries of~$\F_{2n}[\vphi]$ write as
\[
(\F_{2n}^{\vphantom{-1}}[\F_n^{-1}[z e^j]])_k
= \frac{z}{\sqrt{(4n+1)(2n+1)}} \Dir_{n}\left(\frac{\chi_{k,2n}}{\chi_{j,n}}\right),
\]
where~$\chi_{j,n} = \exp\big(\frac{i2\pi j}{2n+1}\big)$ and~$\Dir_n(z) = \sum_{\tau \in [n]_{\pm}} z^\tau$ is the Dirichlet kernel (see Appendix~\ref{apx:dirichlet-and-fejer} for some background and auxiliary results on the Dirichlet and Fej\'er kernels). 
As the result, we get
\begin{align}
&(4n+1)\,\| \Diag(\F_{2n}[\vphi_{\cJ,\vec z}]) \F_{2n}[\xi] \|_{2}^2 \notag\\
&\le
	\frac{\odima{1}}{(2n+1)^2} \sum_{k \in [2n]_{\pm}} 
	|(\F_{2n}[\xi])_k|^2
	\Bigg( \sum_{j \in \cJ} \left| \Dir_{n}\left(\frac{\chi_{k,2n}}{\chi_{j,n}}\right) \right| \Bigg)^2 
= \F_{2n}[\xi]^\htop \Lambda_\cJ	\F_{2n}[\xi]
\end{align}
where~$\Lambda_{\cJ}$ is a diagonal matrix with positive entries on the main diagonal. 
Now: by Lemma~\ref{lem:oversampling-linf-sparse},
\begin{equation}
\label{eq:Lambda-op}
\|\Lambda_{\cJ} \|_{\op}
=
\frac{\odima{1}}{(2n+1)^2} \max_{w \in \T_{2n}} \Bigg(\sum_{j \in \cJ} \left| \Dir_{n}\left(\frac{w}{\chi_{j,n}}\right) \right| \Bigg)^2 
\le \log^2(e^3\odima{s}).
\end{equation}
This implies the following estimate for the trace of~$\Lambda_{\cJ}$:
\begin{align}
\label{eq:Lambda-trace}
\tr(\Lambda_{\cJ}) 
&\;\le 
\vspace{-0.3cm}\frac{\odima{1}}{(2n+1)^2} \Bigg( \max_{w' \in \T_{2n}} \sum_{j' \in \cJ} \left| \Dir_{n}\left(\frac{w'}{\chi_{j',n}} \right) \right| \Bigg)
	\sum_{j \in \cJ} \sum_{w \in \T_{2n}} \left| \Dir_{n}\left(\frac{w}{\chi_{j,n}}\right) \right|  \notag\\
\vspace{-0.3cm}
&\stackrel{\eqref{eq:Lambda-op}}{\le} 
\frac{\log(e^3\odima{s})}{(2n+1)}
	\sum_{j \in \cJ} \sum_{w \in \T_{2n}} \left| \Dir_{n}\left(\frac{w}{\chi_{j,n}}\right) \right|  \notag\\
&\;\le \frac{\odima{2s}\log(e^3s)}{(2n+1)} \max_{z \in \T_n} \sum_{w \in \T_{2n}} \left| \Dir_{n}\left(\frac{w}{z}\right) \right|  
\le 2s \log(e^3s) \log(e^3n).
\end{align}
where in the final step we applied Lemma~\ref{lem:oversampling-linf}. 
When combined together,~\eqref{eq:Lambda-op}--\eqref{eq:Lambda-trace} imply that
\begin{equation}
\label{eq:Lambda-frob}
\|\Lambda_{\cJ}\|_{\sF}^{\odima{2}} 
\le  \odima{2s \log(e^3\odima{s})^3 \log(e^3n)}.
\end{equation}
When combined with~\eqref{eq:quad-form-indef} and the union bound, the estimates~\eqref{eq:krein-milman-quad}--\eqref{eq:Lambda-frob} imply that w.p.~$\ge 1-\delta$, 
\begin{align}
\tfrac{1}{2} \| \wh\vphi * \xi \|_{n,2}^2 
&\le \max_{\cJ \,\subset\, \{0,1, ..., 2n\}: \; |\cJ| = \odima{2s}} \tr(\Lambda_{\cJ}) + \| \Lambda_{\cJ} \|_{\sF} \sqrt{2 \log(N_s\delta^{-1})} + \|\Lambda_{\cJ}\|_{\op} \log(N_s\delta^{-1}) \notag\\
&\le \odima{2s}\log(e^3\odima{s}) \log(e^3n) + \odima{2} \log^{3/2}(e^3\odima{s})\sqrt{s \log(e^3n) \log(N_s\delta^{-1})} + \log^2(e^3\odima{s}) \log(N_s\delta^{-1}) \notag\\
&\le (\odima{3s}\log(e^3s) \log(e^3n) + \odima{2}\log^2(e^3s) \log(N_s\delta^{-1})) \notag\\
&\le (\odima{7s} \log(e^3n) + \odima{2}\log(\delta^{-1})) \log^2(e^3s),
\label{eq:l2con-convo-quad-bound}
\end{align}
where the penultimate step is by Cauchy-Schwarz; in the last we used~$\log N_s \le \odima{2s}\log(2n+1)$.

\noindent
Plugging~\eqref{eq:detection-II-term1},~\eqref{eq:detection-II-term2},~\eqref{eq:rand-convo-term-bound},~\eqref{eq:l2con-chi-square},~\eqref{eq:l2con-convo-quad-bound} in~\eqref{eq:l2con-error-decomp-total}, and using~$\sqrt{2} \le \frac{3}{2}$, by simple algebra we get~\eqref{eq:l2-core}.
\qed


\subsection{Proof of Corollary~\ref{cor:l2-full}}
In the setup of Corollary~\ref{cor:l2-full}, i.e.~assuming~$n,s$ are powers of 3, we make use of the decomposition
\[
\big\|\wh x^\full - x\big\|_{2n,2}^2 = \big\|\wh x^\full - x\big\|_{n,2}^2 + \left( \sum_{k = 1}^K \left\|\Delta^{\pm h_k}\big[\wh x^\full - x\big]\right\|_{n_k,2}^2 \right) + \sigma^2 \left( \|\xi\|_{2n,2}^2 - \|\xi\|_{2n-\odima{3s},2}^2\right).
\]
Here, there are~$2K+2 \le 2\log_3(n/s)$ terms; each but the last one admits the bound of Theorem~\ref{th:l2con-core}, and for the last one we use the chi-square bound~\eqref{eq:detection-I-edge}. The result of Corollary~\ref{cor:l2-full} now follows via the union bound \odima{and some arithmetics to account for the standalone chi-squared term.}
\qed

\appendix

\section{Auxiliary results}
\label{apx:aux}

\subsection{Bounding the norm of a convolution power}
\label{apx:convolution-powers}


\begin{proposition}[Convolution powers]
\label{prop:convolution-powers}
If~$X \subseteq \C(\Z)$ admits a reproducing filter~$\phi \in  \C_m(\Z)$ with~$\| \phi \|_2 \le \frac{\sR}{\sqrt{2m+1}}$, then
for any~$k \in \{2,3,...\}$,~$\phi^{k} \in \C_{km}(\Z)$ is also reproducing for~$X$, with
\[
\|\phi^{k}\|_2 \le \|\F_{km} [\phi^{k}]\|_1 \le \frac{c_k \sR^k}{\sqrt{2km+1}}
\]
where~$c_{k} = 2^{k-1}$ for~$k = 2^p$ and~$c_{2k+1} \le 3 \sqrt{2k+1}c_k^2$ for~$k \in \N$; as such,~$\log(c_k) = O(k\log k)$.
\end{proposition}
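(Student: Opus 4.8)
The plan is to separate the easy structural claims from the quantitative one, and then to derive the norm bound from a two‑step recursion on the constants $c_k$ that tracks the binary expansion of $k$.

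\textbf{Structural part.} Since $\phi$ reproduces $X$, for every $x\in X$ we have $\phi*x=x$, and by induction $\phi^{k}*x=\phi*(\phi^{k-1}*x)=\phi*x=x$, so $\phi^{k}$ reproduces $X$; likewise $\mathrm{supp}(\phi^{k})\subseteq\mathrm{supp}(\phi^{k-1})+\mathrm{supp}(\phi)\subseteq[-km,km]$, hence $\phi^{k}\in\C_{km}(\Z)$. The left inequality $\|\phi^{k}\|_2\le\|\F_{km}[\phi^{k}]\|_1$ is Parseval followed by $\|\cdot\|_2\le\|\cdot\|_1$ on $\C^{2km+1}$. So only the right inequality needs work.

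\textbf{The recursion.} Set $c_1:=1$; the hypothesis $\|\phi\|_2\le\sR/\sqrt{2m+1}$ is the asserted $\ell_2$-bound at $k=1$ (the $\ell_1$-bound is false at $k=1$, which is why the statement is restricted to $k\ge2$, but only the $\ell_2$-version will be fed into the recursion). I claim two inductive steps, with the convention that $c_k$ denotes a constant for which $\|\phi^{k}\|_2\le\|\F_{km}[\phi^{k}]\|_1\le c_k\sR^{k}/\sqrt{2km+1}$:
\emph{(i) doubling:} $c_{2k}\le 2c_k^2$; \emph{(ii) odd step:} $c_{2k+1}\le 3\sqrt{2k+1}\,c_k^2$. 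For (i), apply Proposition~\ref{prop:convolution} with $\phi^{k}\in\C_{km}(\Z)$ playing the role of $\phi$ and $km$ that of $m$: from $\|\phi^{k}\|_2\le c_k\sR^{k}/\sqrt{2km+1}$ it yields $\|\F_{2km}[\phi^{2k}]\|_1\le 2c_k^2\sR^{2k}/\sqrt{4km+1}=2c_k^2\sR^{2k}/\sqrt{2(2k)m+1}$, which also bounds $\|\phi^{2k}\|_2$, giving (i). For (ii), write $\phi^{2k+1}=\phi^{2k}*\phi$ with $\phi^{2k}\in\C_{2km}(\Z)$ and $\phi\in\C_m(\Z)$; since $2km+m=(2k+1)m=:N$ the supports fit inside $[-N,N]$, so ordinary convolution equals cyclic convolution modulo $2N+1$ and $(\F_N[\phi^{2k+1}])_j=\sqrt{2N+1}\,(\F_N[\phi^{2k}])_j(\F_N[\phi])_j$. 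Cauchy--Schwarz and Parseval then give
\[
\|\F_N[\phi^{2k+1}]\|_1\le\sqrt{2N+1}\,\|\phi^{2k}\|_2\,\|\phi\|_2\le\sqrt{2N+1}\cdot\frac{2c_k^2\sR^{2k}}{\sqrt{4km+1}}\cdot\frac{\sR}{\sqrt{2m+1}},
\]
using (i) and the hypothesis, so $c_{2k+1}\le\dfrac{2(2k+1)m+1}{\sqrt{(4km+1)(2m+1)}}\,\cdot 2c_k^2$. It remains to verify the elementary bound $\dfrac{2(2k+1)m+1}{\sqrt{(4km+1)(2m+1)}}\le\dfrac{3\sqrt{2k+1}}{2}$ for all integers $m\ge0$, $k\ge1$; squaring and clearing denominators reduces this to $16(2k+1)^2m^2+16(2k+1)m+4\le 72k(2k+1)m^2+18(2k+1)^2m+9(2k+1)$, which holds coefficient by coefficient (the $m^2$-comparison reduces to $9k\ge4k+2$, the $m^1$-comparison to $9(2k+1)\ge8$, the constant one to $9(2k+1)\ge4$). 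This yields (ii).

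\textbf{Solving the recursion.} Every integer $k\ge2$ is reached from $1$ by a sequence of operations $\ell\mapsto2\ell$ and $\ell\mapsto2\ell+1$ dictated by reading off the binary digits of $k$ from the most significant one; applying (i)/(ii) along this path (by strong induction) gives $\|\F_{km}[\phi^{k}]\|_1\le c_k\sR^{k}/\sqrt{2km+1}$. For $k=2^{p}$ only doublings occur, so $c_{2^{p}}=2c_{2^{p-1}}^{2}$ with $c_1=1$, which solves to $c_{2^{p}}=2^{2^{p}-1}=2^{k-1}$. In general, taking logarithms, $\log c_{k'}\le 2\log c_{\lfloor k'/2\rfloor}+O(\log k')$, and unrolling over the $\lfloor\log_2 k\rfloor$ levels (with $\log c_1=0$) gives $\log c_k\le O(\log k)\sum_{j=0}^{\lfloor\log_2 k\rfloor}2^{j}=O(k\log k)$.

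\textbf{Main obstacle.} There is no deep difficulty here: given Propositions~\ref{prop:projector}--\ref{prop:convolution} the argument is essentially bookkeeping. The one point that genuinely requires care is the finite-$m$ rational estimate in step (ii) — one must check $\dfrac{(2(2k+1)m+1)^{2}}{(4km+1)(2m+1)}\le\dfrac{9(2k+1)}{4}$ for every $m\ge0$ and $k\ge1$, not merely asymptotically as $m\to\infty$ — together with arranging the constants so that the recursion comes out exactly in the stated form $c_{2k+1}\le3\sqrt{2k+1}\,c_k^2$ and $c_{2^{p}}=2^{k-1}$.
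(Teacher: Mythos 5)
Your proof is correct and follows essentially the same route as the paper's: induction along the binary representation of $k$, with the doubling step being exactly Proposition~\ref{prop:convolution} (Parseval on $\T_{2km}$) and the odd step reducing to the same recursion $c_{2k+1}\le 3\sqrt{2k+1}\,c_k^2$ after an elementary rational estimate. The only cosmetic difference is in the odd step, where you factor $\phi^{2k+1}=\phi^{2k}\cdot\phi$ and use Cauchy--Schwarz together with the doubling bound, whereas the paper factors $\phi^{2k+1}=\phi\cdot(\phi^{k})^{2}$ and uses an $\ell_\infty$--$\ell_1$ (H\"older) bound with $\|\F_{(2k+1)m}[\phi]\|_\infty\le\|\phi\|_2$; both yield the stated constants.
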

\begin{proof}
The reproducing property follows from~$1-\phi^k(z) = (1-\phi(z))(1+\phi(z)+...+\phi^{k-1}(z))$.
For~$k = 2^p$, we proceed by induction over~$p$: assume that~$\|\phi^{k}\|_2 \le \|\F_{km} [\phi^{k}]\|_1 \le \frac{c_{k} \sR^{k}}{\sqrt{2km+1}}$, then
\[
\begin{aligned}
\|\F_{2km} [\phi^{2k}]\|_1 
\stackrel{\eqref{def:DFT}}{=}  
\sqrt{4km+1} \|\F_{2km}[\phi^k] \|_2^2 
= \sqrt{4km+1} \|\phi^k\|_2^2 
\le
\frac{\sqrt{4km+1}\,c_{k}^2 \sR^{2k}}{2km+1}
\le 
\frac{2c_{k}^2 \sR^{2k}}{\sqrt{4km+1}}.
\end{aligned}
\]
Whence~$c_{2k}^{\vphantom 2} = 2c_{k}^2$ for all~$k \in \N$. 
Unrolling this recursion we arrive at~$c_{2^p} = 2^{\sum_{j = 0}^{p-1} 2^j} = 2^{2^p-1}$.
Now, to bound~$c_{2k+1}$ in terms of~$c_{k}$ observe that
\[
\begin{aligned}
\|\F_{(2k+1)m} [\phi^{k}]\|_1 
&\stackrel{\eqref{def:DFT}}{=}  
\frac{1}{\sqrt{2(2k+1)m+1}} \sum_{z \in \T_{(2k+1)m}} \left| \phi^{2k+1}(z) \right| \\
&\le
(2(2k+1)m+1)\,\| \F_{(2k+1)m}[\phi] \|_{\infty} \|\F_{(2k+1)m}[\phi^k] \|_2^2 \\
&\le 
(2(2k+1)m+1)\,\| \phi\|_2^{\vphantom 2} \,\|\phi^k\|_2^2 \\
&\le 
\frac{2(2k+1)m+1}{2km+1} \, \frac{c_{k}^2 \sR^{2k+1}}{\sqrt{2m+1}} 
\le 
\frac{2k+1}{k} \, \frac{c_{k}^2 \sR^{2k+1}}{\sqrt{2m+1}} 
\le 
\frac{3\sqrt{2k+1}c_{k}^2 \sR^{2k+1}}{\sqrt{2(2k+1)m+1}}.
\end{aligned}
\]
As such,~$c_{2k+1}^{\vphantom 2} \le 3\sqrt{2k+1} c_{k}^2$, indeed.
Finally, to verify that~$\log(c_k) = O(k \log k)$ for all~$k \ge 2$, it suffices to consider the binary representation of~$k$, and alternate between the two cases.
\end{proof}


\subsection{Extreme points of the ball of the norm~$\max\{\|\cdot\|_1, s\|\cdot\|_\infty\}$ on~$\C^n$}
\label{apx:extreme-points}
In what follows, we let~$n,s \in \N \cup \{0\}$ and define the sets~$\Comp^n(s), \Real^n(s), \Posi^{n}(s)$ as follows:
\begin{equation}
\label{def:cap-balls}
\bald
\Comp^n(s)
	&:= \{w \in \C^{n}: \;  \| w \|_{\infty} \le 1, \, \|w\|_1 \le s\}, \\
\Real^n(s)
	&:= \{u \in \R^{n}: \;  \| u \|_{\infty} \le 1, \, \|u\|_1 \le s\}, \\
\Posi^{n}(s)
	&:= \{u \in \R_+^{n}: \;  \| u \|_{\infty} \le 1, \, \|u\|_1 \le s\}.
\eald
\end{equation}
We first recall the (known) explicit characterizations of the vertices of~$\Posi^n(s)$ and~$\Real^n(s)$.
\begin{proposition}[{\cite[Thm.~2.1 and Eq.~(2.1)]{deza2021polytopal}}]
\label{prop:cap-real}
The polytope~$\Posi^n(s)$ (resp.~$\Real^n(s)$) has~${n \choose s}$ $ $  $ $(resp.~$2^s {n \choose s}$) vertices, namely all~$s$-sparse vectors with entries in~$\{0,1\}$ (resp.~$\{-1, 0, 1\}$). 
\end{proposition}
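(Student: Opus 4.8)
This is the classical vertex description of a capped $\ell_1$-ball, and I would give a self-contained two-sided argument, running the symmetric case $\Real^n(s)$ and the positive case $\Posi^n(s)$ in parallel (the sign constraints only change bookkeeping); throughout $s \in \N$ and $n \ge s$. The plan is: (a) show that every vector of the claimed form is an extreme point, by exhibiting a linear functional uniquely maximized at it; (b) show that every extreme point is of the claimed form, via a local exchange/perturbation argument; (c) count.

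\emph{Step (a).} Let $u$ have support $S$ --- for $\Real^n(s)$ a $\{-1,0,1\}$-vector with $|S| = s$, for $\Posi^n(s)$ a $\{0,1\}$-vector with $|S| \le s$. In the symmetric case I would take $\ell(x) := \sum_{i \in S} u_i x_i$: for every feasible $x$ one has $\ell(x) \le \sum_{i \in S} |x_i| \le \|x\|_1 \le s$, and equality throughout forces $x_i = 0$ off $S$, $|x_i| = 1$ on $S$, and $\sign(x_i) = u_i$, i.e.~$x = u$; so $u$ is the unique maximizer, hence a vertex. In the positive case I would take $\ell(x) := \sum_{i \in S} x_i - \sum_{i \notin S} x_i$: for feasible $x \ge 0$ one has $\ell(x) \le \sum_{i \in S} x_i \le |S|$, with equality only at $x = u$.

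\emph{Step (b).} By contraposition, I would show that a feasible $u$ not of the claimed form is the midpoint of two distinct feasible points. Call a coordinate \emph{fractional} if $u_i \notin \{-1,0,1\}$ (resp.~$u_i \notin \{0,1\}$); because of the box constraint, a fractional coordinate automatically lies strictly between its box endpoints and is nonzero. If $u$ has no fractional coordinate it is a $\{-1,0,1\}$- (resp.~$\{0,1\}$-) vector with $\|u\|_1 \le s$, hence of the claimed form (for $\Posi^n(s)$, the ${n \choose s}$ vectors with exactly $s$ nonzeros exhaust those on the facet $\|\cdot\|_1 = s$) --- contradiction. So $u$ has a fractional coordinate $u_k$. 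If the $\ell_1$-budget is slack ($\|u\|_1 < s$), then $u \pm \epsilon e^k$ is feasible for all small $\epsilon > 0$. If the budget is tight ($\|u\|_1 = s$), then since $\|u\|_1$ is an integer while a lone fractional $|u_k| \in (0,1)$ added to the integer contribution of the remaining coordinates cannot be integral, a second fractional coordinate $u_j$ must exist; then the budget-preserving exchange $u \pm \epsilon\bigl(\sign(u_k)\,e^k - \sign(u_j)\,e^j\bigr)$ is feasible for all small $\epsilon > 0$. In either case $u$ is a proper convex combination, hence not a vertex.

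\emph{Step (c), and the main obstacle.} For the count, a vertex on the facet $\|u\|_1 = s$ is specified by a size-$s$ support together with (in the symmetric case) a sign on each support coordinate, giving ${n \choose s}$ and $2^s {n \choose s}$ respectively. The one genuinely delicate point is the tight-budget case of step (b): one must check that the exchange direction simultaneously respects the box constraint on \emph{both} affected coordinates while keeping $\|\cdot\|_1$ exactly equal to $s$, and must correctly dispose of the small boundary subcases; everything else is routine bookkeeping, and in a pinch the statement may simply be quoted from~\cite{deza2021polytopal}.
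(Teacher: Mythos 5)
You should first note that the paper itself gives no proof of this proposition -- it is quoted verbatim from Deza--Pournin -- so a self-contained argument is welcome, and your perturbation machinery (slack budget: move one fractional coordinate; tight budget: parity forces a second fractional coordinate, then do a budget-preserving exchange) is sound as far as it goes. But there are two genuine gaps. First, the positive case: your step (a), which is correct, in fact shows that \emph{every} $\{0,1\}$ vector with $|S|\le s$ ones -- including the zero vector -- is a vertex of $\Posi^n(s)$ (at $u=0$ the $n$ constraints $u_i\ge 0$ are active, so $0$ is a basic feasible point). Hence for $s<n$ the vertex set of $\Posi^n(s)$ consists of \emph{all} at-most-$s$-sparse binary vectors, of cardinality $\sum_{k\le s}{n\choose k}$, not ${n\choose s}$: already $\Posi^2(1)$ is the triangle with vertices $(0,0),(1,0),(0,1)$. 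Your step (c) counts only the vertices on the facet $\|u\|_1=s$ and silently discards the ones your own step (a) produced, so the write-up is internally inconsistent and cannot deliver the stated count. (What the paper actually uses downstream, in Proposition~\ref{prop:cap-comp}, is only that $\Posi^n(s)$ is the convex hull of the at-most-$s$-sparse binary vectors; that is the version you should state and prove.)

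Second, the symmetric case of step (b): the dichotomy ``either $u$ has a fractional coordinate or it is of the claimed form'' is too coarse. A $\{-1,0,1\}$ vector with $k<s$ nonzero entries has no fractional coordinate, yet it is not an exactly $s$-sparse sign vector, so it is ``not of the claimed form''; your argument declares a contradiction there and never exhibits the required midpoint representation for it. Such a vector is indeed not a vertex of $\Real^n(s)$, but to see this one must perturb along a \emph{zero} coordinate, $u\pm\eps e^j$ with $u_j=0$, which is feasible precisely because the budget is slack ($\|u\|_1=k<s\le n$ gives both the slack and the existence of such a $j$); this case is absent from your proof, and without it the inclusion ``vertices of $\Real^n(s)$ $\subseteq$ exactly $s$-sparse sign vectors'', and hence the count $2^s{n\choose s}$, is not established. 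With these two repairs (and the standing assumption $s\le n$ made explicit), your approach goes through.
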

As an immediate corollary, we restrict the possible structure of extremal points for~$\Comp^n(s)$. 
\begin{proposition}
\label{prop:cap-comp}
Let~$e^{0}, ..., e^{n-1}$ be the canonical basis of~$\R^{n}$.
Any extremal point of~$\Comp^n(s)$ is of the form~$\sum_{j = 0}^{n-1} z_j u_j e^j$ for some~$z_0, ..., z_{n-1} \in \T$ and~$s$-sparse binary vector~$u = (u_0,...,u_{n-1})$.
\end{proposition}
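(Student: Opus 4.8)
The plan is to reduce the complex statement to the already-known real statement (Proposition~\ref{prop:cap-real}) by exploiting the $\T^n$-symmetry of the set $\Comp^n(s)$. First I would observe that $\Comp^n(s)$ is invariant under the torus action $w \mapsto (z_0 w_0, \dots, z_{n-1} w_{n-1})$ for $(z_0,\dots,z_{n-1}) \in \T^n$, because both constraints $\|w\|_\infty \le 1$ and $\|w\|_1 \le s$ depend only on the moduli $|w_0|,\dots,|w_{n-1}|$. Consequently the map $\pi: \C^n \to \R_+^n$, $\pi(w) = (|w_0|,\dots,|w_{n-1}|)$ sends $\Comp^n(s)$ onto $\Posi^n(s)$, and it sends extreme points to points of $\Posi^n(s)$ whose structure I want to pin down.

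The key step is: if $w$ is an extreme point of $\Comp^n(s)$, then $\pi(w)$ is an extreme point of $\Posi^n(s)$. I would argue by contrapositive. Suppose $u := \pi(w)$ is not extreme in $\Posi^n(s)$, so $u = \tfrac12(u' + u'')$ with $u', u'' \in \Posi^n(s)$, $u' \ne u''$. Choose phases $z_j \in \T$ with $z_j |w_j| = w_j$ (any $z_j$ will do on coordinates where $w_j = 0$), and lift to $w' := (z_j u'_j)_j$, $w'' := (z_j u''_j)_j$; these lie in $\Comp^n(s)$ since $\pi(w') = u'$, $\pi(w'') = u''$. Then $\tfrac12(w' + w'') = (z_j \cdot \tfrac12(u'_j + u''_j))_j = (z_j u_j)_j = w$, and $w' \ne w''$ because $u' \ne u''$ — contradicting extremality of $w$. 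Hence $\pi(w)$ is extreme in $\Posi^n(s)$, so by Proposition~\ref{prop:cap-real} it is an $s$-sparse $\{0,1\}$-vector $u$. Finally, writing $w_j = z_j u_j$ with $z_j \in \T$ arbitrary on the (zero) coordinates outside $\S(u)$ and $z_j = w_j/|w_j|$ on $\S(u)$, we get exactly $w = \sum_{j=0}^{n-1} z_j u_j e^j$ with $u$ an $s$-sparse binary vector, as claimed. (Note the statement is only a necessary condition, so I need not verify that every such vector is in fact extreme.)

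I expect the only mild subtlety — the "main obstacle," such as it is — to be handling the coordinates where $w_j = 0$ cleanly, since there the phase is not determined; but this is harmless because the stated conclusion allows arbitrary $z_j \in \T$ on those coordinates, and the lifting argument only needs $\pi(w') = u'$ and $\pi(w'') = u''$, which holds for any fixed choice of phases. A second point to state carefully is that $\pi$ maps $\Comp^n(s)$ \emph{onto} $\Posi^n(s)$ and that the decomposition $w = \tfrac12(w' + w'')$ genuinely uses $w' \ne w''$, which follows coordinatewise from $u' \ne u''$. Everything else is routine, so I would keep the write-up to a short paragraph mirroring the three bullet points above.
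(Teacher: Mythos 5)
Your proof is correct and rests on the same mechanism as the paper's: membership in $\Comp^n(s)$ depends only on the moduli, so one passes to $\Posi^n(s)$ via $w \mapsto |w|$, invokes Proposition~\ref{prop:cap-real}, and re-attaches the phases $z_j = w_j/|w_j|$. The only (inessential) difference is organizational: you transfer extremality through the modulus map by lifting a midpoint decomposition (contrapositive), whereas the paper directly writes every $w \in \Comp^n(s)$ as a convex combination of vectors of the stated form by decomposing $|w|$ over the vertices of $\Posi^n(s)$ and multiplying in the phases.
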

\begin{proof}
Let~$w \in \Comp^n(s)$, then~$|w| \in \Posi^n(s)$ for~$|w| = (|w_0|, ..., |w_{n-1}|)$.
By Proposition~\ref{prop:cap-real}, there exist a sequence~$u^{1}, u^{2}, ...$ of~$s$-sparse binary vectors and a sequence~$\alpha_1, \alpha_2, ...$ of nonnegative coefficients adding up to 1, such that
$
|w_j| = \sum_{k} \alpha_k (u^{k})_j
$
for~$0 \le j \le n-1$. Multiplying this by~$z_j := w_j / |w_j| \in \T$ we get
$w_j = \sum_{k} \alpha_k z_j (u^{k})_j$ 
for~$0 \le j \le n-1$.
In other words, any~$w \in \Comp^n(s)$ is a convex combination of vectors of the form~$( z_0 u_0, ..., z_{n-1} u_{n-1})$, where each~$u = (u_0, ..., u_{n-1})$ is an~$s$-sparse binary vector.
\end{proof}

\section{Technical results on trigonometric interpolation}
\label{apx:dirichlet-and-fejer}

\subsection{Dirichlet and Fej\'er kernels}

The Dirichlet and Fej\'er kernels, respectively, are the following Laurent polynomials~\cite{zygmund2002trigonometric}:
\begin{align}
\label{def:Dir-and-Fej}
\Dir_m(z) := \sum_{|k| \le m} z^k, \quad
\Fej_m(z) 
&:= \sum_{|k| \le m} \left(1-\frac{|k|}{m+1}\right)z^k.
\end{align}
That is,~$\Dir_m(z)$ is the~$z$-transform of the rectangular window signal~$u_t = \ones\{|t| \le m\}$;~$\Fej_{m}(z)$ that of the triangular window signal~$v_t (1-\frac{|t|}{m+1}) \ones\{|t| \le m\}$, 
and~$\Fej_{2m}(z) = \frac{1}{\odima{2}m+1} \Dir_m^2(z)$.
Note that~$\frac{1}{2m+1}\Dir_m(1) = \frac{1}{m+1}\Fej_m(1) = 1$. 
These kernels can be recast as trigonometric polynomials:
\[
\dir_m(\omega) := \Dir_m(e^{i\omega}) = \frac{\sin((2m+1)\frac{\omega}{2})}{\sin(\frac{\omega}{2})},
\quad
\fej_m(\omega) := \Fej_m(e^{i\omega}) = \frac{1}{m+1} \left(\frac{1-\cos((m+1)\omega)}{1-\cos(\omega)} \right).
\]
Finally, we define the causal counterpart of the Fej\'er kernel by appropriately shifting~$\Fej_m(z)$:
\begin{equation}
\label{def:Fej-causal}
\Fej_{2m}^+(z) := z^m \Fej_m(z).
\end{equation}
Note that~$\Fej_{2m}^+ \in \C_{2m}^+(\Z)$. Moreover,~$\Fej_{2m}^+(1) = m+1$, and~$|\Fej_{2m}^+(z)| = |\Fej_m(z)|$ for all~$z \in \T$.

\paragraph{Summation over equidistant grids.}
We bound the sums of~$|\Dir_n^{\vphantom+}|$,~$|\Fej_n^{\vphantom+}|$,~$|\Fej_{2n}^+|$ over~$\T_N$.
\begin{lemma}
\label{lem:oversampling}
For any~$n,N \in \N$ and~$a \in \T$, letting~$H_N$ be the~$N^{\textup{th}}$ harmonic number, one has
\[
\begin{aligned}
\frac{1}{2N+1}\sum_{w \in \T_N} \left|\Dir_{n}\Big(\frac{w}{a}\Big)\right|
&\le \frac{4n+2}{2N+1} + H_N,\\
\frac{1}{2N+1}\sum_{w \in \T_N} \left|\Fej_{2n}^+\Big(\frac{w}{a}\Big)\right|
= \frac{1}{2N+1}\sum_{w \in \T_N} \left|\Fej_{n}\Big(\frac{w}{a}\Big)\right|
&\le \frac{2n+2}{2N+1} + \left( \frac{2N+1}{2n+2} \right) \frac{\pi^2}{6}.
\end{aligned}
\]
\end{lemma}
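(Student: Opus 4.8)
The plan is to reduce everything to a single sum: $\frac{1}{2N+1}\sum_{w \in \T_N} |\Dir_n(w/a)|$, since $|\Fej_n| \le |\Dir_n|$ pointwise in an averaged sense is not quite what we want — rather, $\Fej_{2n}(z) = \frac{1}{4n+1}\Dir_n^2(z)$, and more to the point we will bound $\sum |\Fej_n(w/a)|$ directly using $|\fej_n(\omega)| = \frac{1}{n+1}\big|\frac{1-\cos((n+1)\omega)}{1-\cos\omega}\big| \le \frac{1}{n+1}\cdot\frac{2}{1-\cos\omega} \asymp \frac{1}{(n+1)\omega^2}$ for $\omega$ bounded away from $0$, and $\fej_n(\omega) \le n+1$ near $\omega = 0$. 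The identity $|\Fej_{2n}^+(z)| = |\Fej_n(z)|$ is already recorded in the excerpt, so the first equality in the second display is free.

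First I would handle the Dirichlet sum. Write $w/a = e^{i\theta_j}$ where the $\theta_j$, as $w$ ranges over $\T_N$, are $N$-many points of the form $\frac{2\pi j}{2N+1} - \arg a$, i.e.\ equally spaced with gap $\frac{2\pi}{2N+1}$ on $\T$. Split the sum into the "near" nodes — those $\theta_j$ within distance, say, $\frac{2\pi}{2n+1}$ of $0$ — and the "far" nodes. For the near nodes there are at most $\frac{2(2N+1)}{2n+1} + 1 \asymp \frac{2N+1}{2n+1}+1$ of them (by a counting argument on how many grid points fit in an arc of that length), and on each $|\Dir_n(e^{i\theta})| \le 2n+1$; so this block contributes $\le \frac{(2n+1)(2N+1)}{(2n+1)(2N+1)}\cdot(\text{const}) + \frac{2n+1}{2N+1}$, which after the $\frac{1}{2N+1}$ normalization is $O\!\big(\frac{n}{N}\big) + O(1)$ — but to get the clean constant $\frac{4n+2}{2N+1}$ one should instead note that the nearest two grid points to $0$ already each contribute $\le 2n+1$, giving $\frac{2(2n+1)}{2N+1} = \frac{4n+2}{2N+1}$, and then place {\em all} other nodes in the "far" bin. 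For the far nodes, use $|\dir_n(\theta)| = \big|\frac{\sin((2n+1)\theta/2)}{\sin(\theta/2)}\big| \le \frac{1}{|\sin(\theta/2)|} \le \frac{\pi}{|\theta|}$ for $|\theta|\le\pi$ (using $|\sin x| \ge \frac{2}{\pi}|x|$ on $[-\pi/2,\pi/2]$). Summing $\frac{1}{2N+1}\sum \frac{\pi}{|\theta_j|}$ over the $\le N$ far nodes, whose $|\theta_j|$ are $\ge \frac{2\pi k}{2N+1}$ for $k = 1, 2, \dots$ up to roughly $N$, gives $\frac{1}{2N+1}\cdot\frac{\pi(2N+1)}{2\pi}\sum_{k=1}^{N}\frac{1}{k} = \frac{1}{2}H_N$ on each side of $0$, hence $H_N$ total — matching the claimed bound. (One must be careful that the shift $-\arg a$ does not worsen the spacing of $|\theta_j|$ from $0$; it can only help, since shifting the grid can only move the second-nearest point farther, and the harmonic-number bound $\sum 1/k$ is the extremal configuration when a grid point sits exactly at a multiple of the spacing.)

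For the Fej\'er sum the same split works but is cleaner since $\fej_n \ge 0$: the two near nodes contribute $\le 2(n+1)$, i.e.\ $\frac{2n+2}{2N+1}$ after normalizing; for the far nodes use $\fej_n(\theta) \le \frac{1}{n+1}\cdot\frac{2}{1-\cos\theta} = \frac{1}{n+1}\csc^2(\theta/2) \le \frac{1}{n+1}\cdot\frac{\pi^2}{\theta^2}$, and then $\frac{1}{2N+1}\sum_{\text{far}} \frac{\pi^2}{(n+1)\theta_j^2} \le \frac{1}{(2N+1)(n+1)}\cdot\frac{\pi^2 (2N+1)^2}{4\pi^2}\cdot 2\sum_{k\ge1}\frac{1}{k^2} = \frac{2N+1}{2(n+1)}\cdot\frac{\pi^2}{6}$; combining the two halves as written, and noting $2(n+1) = 2n+2$, yields $\frac{2n+2}{2N+1} + \frac{2N+1}{2n+2}\cdot\frac{\pi^2}{6}$. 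I would reference this lemma's proof as essentially the $N=9m$, $n=5m$ specialization used in Lemma~\ref{lem:oversampling} of the main text (the bound $2 + (\frac{18m+1}{5m+1})^2\frac{\pi^2}{12}$ quoted there is exactly this with $\frac{2n+2}{2N+1}\le 2$ absorbed and $\fej_{5m}$ being a {\em Fej\'er-of-$5m$} kernel summed over $\T_{9m}$, the factor $\frac{1}{2}$ versus $\frac{1}{6}$ coming from $\Fej_{5m}$ vs.\ the $\frac{\Fej_{5m}}{5m+1}$ normalization).

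The main obstacle is the bookkeeping around the "near" nodes and the shift by $\arg a$: one has to make sure the constant in front of $n/N$ is genuinely $2$ (for Dirichlet) and $1$ (for Fej\'er, i.e.\ $\frac{2n+2}{2N+1}$) rather than something larger, which forces a slightly delicate choice of exactly which grid points go in the near bin — in the worst case a grid point coincides with $\theta = 0$, contributing the full $2n+1$ (resp.\ $n+1$), and its two neighbours are at distance $\frac{2\pi}{2N+1}$; one then needs $2N+1 \le \text{(something)}\cdot(2n+1)$ to even make the "two near nodes" accounting tight, but since the statement has $\frac{2n+2}{2N+1}$ with no restriction on $N$ vs.\ $n$, the honest route is: bound {\em every} node's Dirichlet value by $\min\{2n+1, \pi/|\theta_j|\}$, peel off only the single closest node (value $\le 2n+1$, contributing $\le \frac{2n+1}{2N+1}$), note the next-closest on each side is $\ge \frac{2\pi}{2N+1}$ away after accounting for the shift hence $\le \frac{\pi}{2\pi/(2N+1)} = \frac{2N+1}{2}$, wait — that is not $\le 2n+1$ in general; so in fact the clean way is to just say the {\em two} nodes nearest $0$ contribute $\le 2(2n+1)$ unconditionally (each $\le 2n+1$), giving $\frac{4n+2}{2N+1}$, and {\em all the rest} are at distance $\ge \frac{2\pi}{2N+1}$, $\ge \frac{4\pi}{2N+1}$, $\ge\frac{6\pi}{2N+1},\dots$ from $0$ (in each direction, since with two nodes already removed the remaining ones are at least two grid-steps out), so the far sum is $\le \frac{1}{2N+1}\cdot 2\sum_{k\ge 1}\frac{\pi(2N+1)}{2\pi k} = \sum_{k\ge1}^{\le N}\frac{1}{k} = H_N$. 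This is the argument I would write out; the Fej\'er case is identical with $2n+1 \rightsquigarrow n+1$ and $\frac{\pi}{|\theta|} \rightsquigarrow \frac{\pi^2}{(n+1)\theta^2}$, $\sum 1/k \rightsquigarrow \sum 1/k^2 = \pi^2/6$.
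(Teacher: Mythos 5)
Your proposal is correct and follows essentially the same route as the paper's proof: reduce $|\Fej_{2n}^+|$ to $|\Fej_n|$ on $\T$, bound the (at most) two grid points with argument in $(-\tfrac{2\pi}{2N+1},\tfrac{2\pi}{2N+1})$ by $\Dir_n(1)=2n+1$ (resp.\ $\Fej_n(1)=n+1$), and control the remaining at most $N$ points per side via $|\dir_n(\omega)|\le \pi/|\omega|$ and $(n+1)\fej_n(\omega)\le \pi^2/\omega^2$ summed over distances $\ge \tfrac{2\pi k}{2N+1}$, giving $H_N$ and $\pi^2/6$ exactly as in the paper. The only cosmetic difference is the exploratory bookkeeping around the near nodes, which you resolve in the same way the paper does.
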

\begin{proof}
Since~$|\Fej_{2n}(z)| = |\Fej_n^+(z)|$ for~$z \in \T$, the summation of~$|\Fej_{2n}^+|$ reduces to that of~$|\Fej_{n}|$. 
Now, for any~$a \in \C$, at most 2 points of the grid~$\{\frac{w}{a}: w \in \T_N\}$ have argument in~$(-\frac{2\pi}{2N+1},\frac{2\pi}{2N+1})$.
Each of them contributes at most~$\Dir_n(1) = 2n+1$ or~$\Fej_n(1) = n+1$ to the respective sum. 
As for the remaining points, we use the following elementary estimates valid for all~$\omega \in [-\pi,\pi]$,
\vspace{-0.1cm}\[
|\dir_n(\omega)| \le \frac{1}{|\sin(\frac{\omega}{2})|} \le \frac{\pi}{|\omega|}
\quad\text{and}\quad
0 \le (n+1)\fej_n(\omega) \le \frac{1}{\sin^2(\frac{\omega}{2})} \le \frac{\pi^2}{\omega^2},
\]
whose right-hand sides decrease in~$|\omega|$, and summate separately over at most~$N$ points with argument in~$[\frac{2\pi}{2N+1}, \pi]$ and at most~$N$ points with argument in~$[-\pi,-\frac{2\pi}{2N+1}]$. 
Overall, this gives
\[
\begin{aligned}
\sum_{w \in \T_N} \left|\Dir_{n} \hspace{-0.1cm}\left(\frac{w}{a}\right)\right|
&\le 2\left(2n+1 + \sum_{k = 1}^N \frac{\pi}{\arg(\chi_{k,N})} \hspace{-0.1cm}\right)
= {4n+2} + (2N+1)\sum_{k = 1}^N \frac{1}{k},\\
\sum_{w \in \T_N} \left|\Fej_{n}\hspace{-0.1cm}\left(\frac{w}{a}\right)\right|
&\le 2\left(n+1 +  \frac{1}{n+1} \sum_{k = 1}^N \frac{\pi^2}{\arg(\chi_{k,N})^2} \hspace{-0.1cm}\right) 
= {2n+2} + \frac{(2N+1)^2}{2n+2} \sum_{k = 1}^N \frac{1}{k^2}.
\end{aligned}
\]
Here we used that~$\arg(\chi_{k,N}) = \frac{2\pi k}{2N+1}$ for~$k \in \{1,...,N\}$;~cf.~\eqref{def:DFT-grid}. 
\end{proof}

\paragraph{Oversampling bounds.} 
As it was observed in~\cite{harchaoui2019adaptive}, the Dirichlet kernel summation estimate of Lemma~\ref{lem:oversampling} directly implies a uniform estimate for the inflation of~$\ell_1$-norm with oversampling, that is~the uniform over~$\vphi \in \C_n(\Z)$ upper bound for the ratio~$\|\F_{N}[\vphi]\|_{1}/\|\F_{n}[\vphi]\|_{1}$ with~$N \ge n$. 

\begin{lemma}[Oversampling in~$\ell_1$]
\label{lem:oversampling-l1}
For any~$n,N \in \N$ such that~$N \ge n$, and nonzero~$\vphi \in \C_n(\Z)$,
\[
\begin{aligned}
\frac{\| \F_{N}[\vphi] \|_1}{ \| \F_{n}[\vphi] \|_1} 
&\le \frac{1}{\sqrt{(2N+1)(2n+1)}} \max_{a \in \T_n} \sum_{w \in \T_{N}} \left|\Dir_{n}\left(\frac{w}{a}\right)\right| 
\le \sqrt{\frac{2N+1}{2n+1}} \log(eN) + 2\sqrt{\frac{2n+1}{2N+1}}.
\end{aligned}
\]
\begin{proof}
We first note that the convex function~$\| \F_{N}^{\vphantom{\,-1}}[\F_{n}^{\,-1}[\cdot]]\|_1$ is maximized at an extreme point of the unit~$\ell_1$-ball in~$\C^{2n+1}$, i.e.~on a vector of the form
$
z e^{j},
$
where~$z \in \T$ and~$e^{j}$,~$0 \le j \le 2n$, is the~$j^\textup{th}$ canonical basis vector of~$\R^{2n+1}$. 
As the result,
\[
\bald
\max_{\vphi \in \C_n(\Z) \setminus 0} \frac{\| \F_{N}[\vphi] \|_1}{ \| \F_{n}[\vphi] \|_1} 
&=  \max_{0 \le j \le 2n} \| \F_{N}^{\vphantom{\,-1}}[\F_n^{\,-1}[e^j]] \|_1 
= \frac{1}{\sqrt{(2N+1)(2n+1)}} \max_{0 \le j \le 2n} \sum_{k = 0}^{2N} \left|\Dir_{n}\left(\frac{\chi_{k,N}}{\chi_{j,n}}\right)\right|.
\eald
\]
After that, it only remains to invoke Lemma~\ref{lem:oversampling}. 
\end{proof}
\end{lemma}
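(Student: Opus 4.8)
The plan is to reduce the worst-case oversampling ratio to the Dirichlet-kernel grid-summation estimate of Lemma~\ref{lem:oversampling}, so that essentially no new analytic work is needed. First I would use that the restriction of~$\F_n$ to~$\C_n(\Z)$ is an isometry with generalized inverse~$\F_n^{\,-1}$: every nonzero~$\vphi \in \C_n(\Z)$ equals~$\F_n^{\,-1}[a]$ with~$a := \F_n[\vphi] \ne 0$, so
\[
\sup_{\vphi \in \C_n(\Z)\setminus 0} \frac{\|\F_N[\vphi]\|_1}{\|\F_n[\vphi]\|_1} \;=\; \sup_{a \in \C^{2n+1}\setminus 0} \frac{\|\F_N[\F_n^{\,-1}[a]]\|_1}{\|a\|_1}.
\]
The right-hand side is the $\ell_1\!\to\!\ell_1$ operator norm of the linear map~$a \mapsto \F_N[\F_n^{\,-1}[a]]$, equivalently the maximum of the convex, positively homogeneous function~$a \mapsto \|\F_N[\F_n^{\,-1}[a]]\|_1$ over the unit~$\ell_1$-ball of~$\C^{2n+1}$; by convexity this is attained at an extreme point, i.e.~at~$a = z e^j$ for some~$z \in \T$ and~$j \in \{0,\dots,2n\}$, and absolute homogeneity of~$\|\cdot\|_1$ lets me set~$z = 1$. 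Hence the supremum equals~$\max_{0 \le j \le 2n} \|\F_N[\F_n^{\,-1}[e^j]]\|_1$.

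Next I would compute this quantity explicitly via the DFT diagonalization identity. Since~$(\F_n^{\,-1}[e^j])_\tau = (2n+1)^{-1/2}\ones\{\tau \in [n]_\pm\}\chi_{j,n}^\tau$ and~$n \le N$, a one-line calculation gives
\[
(\F_N[\F_n^{\,-1}[e^j]])_k = \frac{1}{\sqrt{(2N+1)(2n+1)}} \sum_{\tau \in [n]_\pm} \Big(\frac{\chi_{j,n}}{\chi_{k,N}}\Big)^{\!\tau} = \frac{1}{\sqrt{(2N+1)(2n+1)}}\,\Dir_n\!\Big(\frac{\chi_{k,N}}{\chi_{j,n}}\Big),
\]
using the symmetry~$\Dir_n(z) = \Dir_n(1/z)$. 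Summing~$|\cdot|$ over~$k$, and noting that~$\{\chi_{k,N}\}_{k=0}^{2N} = \T_N$ while~$\chi_{j,n}$ ranges over~$\T_n$ as~$j$ ranges over~$\{0,\dots,2n\}$, yields the first claimed bound (an equality for the supremum):
\[
\frac{\|\F_N[\vphi]\|_1}{\|\F_n[\vphi]\|_1} \le \frac{1}{\sqrt{(2N+1)(2n+1)}} \max_{a \in \T_n} \sum_{w \in \T_N} \Big|\Dir_n\!\Big(\frac{w}{a}\Big)\Big|.
\]

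Finally I would invoke Lemma~\ref{lem:oversampling}, which bounds~$\frac{1}{2N+1}\sum_{w \in \T_N}|\Dir_n(w/a)|$ by~$\frac{4n+2}{2N+1} + H_N$ uniformly in~$a \in \T$; multiplying through by~$2N+1$, dividing by~$\sqrt{(2N+1)(2n+1)}$, and regrouping produces exactly~$\sqrt{\tfrac{2N+1}{2n+1}}\,H_N + 2\sqrt{\tfrac{2n+1}{2N+1}}$, after which the elementary estimate~$H_N \le 1 + \log N = \log(eN)$ gives the stated right-hand side. I do not expect a genuine obstacle: the analytic heart of the matter — the Dirichlet summation over an equidistant grid — is already packaged in Lemma~\ref{lem:oversampling}. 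The only points requiring care are that the map~$a \mapsto \|\F_N[\F_n^{\,-1}[a]]\|_1$ is genuinely a seminorm (so its maximum over the~$\ell_1$-ball sits at a vertex~$ze^j$), and the bookkeeping in the~$\F_N\circ\F_n^{\,-1}$ identity; the reduction of~$\max_j$ to~$\max_{a \in \T_n}$ is then automatic because~$\chi_{j,n}$, $0 \le j \le 2n$, exhausts~$\T_n$.
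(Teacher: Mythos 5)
Your proposal is correct and follows essentially the same route as the paper's proof: reduce the worst-case ratio to the maximum of the convex map $a \mapsto \|\F_N[\F_n^{\,-1}[a]]\|_1$ over the $\ell_1$-ball, evaluate it at the extreme points $z e^j$ where it becomes a Dirichlet-kernel sum over $\T_N$, and conclude via Lemma~\ref{lem:oversampling} and $H_N \le \log(eN)$. You merely spell out the intermediate DFT computation and the final bookkeeping in more detail than the paper does, and both are carried out correctly.
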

\begin{remark}
\label{rem:dirichlet-log}
{\em For~$N \ge (1+c)n$, the inequality of Lemma~\ref{lem:oversampling-l1} is sharp up to a constant factor.}
\end{remark}
Below we give a counterpart result for~$\ell_\infty$-norm. 
In the context of trigonometric interpolation, the corresponding ratio is known as the Lebesgue constant, and a sharp estimate is known for it (e.g.~\cite{ehlich1966auswertung},~\cite[Thm.~2.1]{sorevik2016trigonometric}).
We give a standalone estimate to keep the paper self-contained.

\begin{lemma}[Oversampling in~$\ell_\infty$]
\label{lem:oversampling-linf}
For any~$n,N \in \N$ with~$N \ge n$, and nonzero~$\vphi \in \C_n(\Z)$, 
\[
\begin{aligned}
\frac{\| \F_{N}[\vphi] \|_\infty}{ \| \F_{n}[\vphi] \|_\infty} 
\le 
	\frac{1}{\sqrt{(2N+1)(2n+1)}} \max_{a \in \T_N} \sum_{w \in \T_n} \left| \Dir_{n} \left( \frac{w}{a} \right) \right|
	\le \sqrt{\frac{2n+1}{2N+1}} (H_n + 2).
\end{aligned}
\]
As a consequence, for any~$\vphi \in \C_n(\Z)$ it holds that\,~$\max_{z \in \T} |\vphi(z)| \le (H_n + 2) \, \max_{z \in \T_n} |\vphi(z)|$.
\end{lemma}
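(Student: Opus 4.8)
The plan is to mirror the proof of Lemma~\ref{lem:oversampling-l1}, replacing the role of the $\ell_1$-norm by the $\ell_\infty$-norm and the $\ell_1$-operator norm of the ``resampling matrix'' by its $\ell_\infty$-operator norm (the maximal absolute row sum). Write $M := \F_N \circ \F_n^{\,-1}\colon \C^{2n+1}\to\C^{2N+1}$, which is well defined because $\C_n(\Z)\subseteq\C_N(\Z)$ for $N\ge n$; since the generalized inverse is a genuine inverse on $\C_n(\Z)$, for any nonzero $\varphi\in\C_n(\Z)$ one has $\F_N[\varphi]=M\,\F_n[\varphi]$, so $\|\F_N[\varphi]\|_\infty/\|\F_n[\varphi]\|_\infty\le\max_{0\le k\le 2N}\sum_{j=0}^{2n}|M_{kj}|$. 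The first step is to identify the entries of $M$: taking $\varphi=\F_n^{\,-1}[e^j]$, a one-line computation with the definitions of $\F_n^{\,-1}$ and $\F_N$ gives $M_{kj}=\frac{1}{\sqrt{(2N+1)(2n+1)}}\sum_{|\tau|\le n}(\chi_{j,n}/\chi_{k,N})^\tau=\frac{1}{\sqrt{(2N+1)(2n+1)}}\Dir_n(\chi_{k,N}/\chi_{j,n})$, where I use that $\Dir_n(z)=\Dir_n(1/z)$ (this is the $N=2n$ identity already recorded in Section~\ref{sec:stat-proofs}, written for general $N$). Hence $\max_k\sum_j|M_{kj}|=\frac{1}{\sqrt{(2N+1)(2n+1)}}\max_{a\in\T_N}\sum_{w\in\T_n}|\Dir_n(w/a)|$, which is exactly the first inequality claimed.

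For the second inequality I would simply invoke Lemma~\ref{lem:oversampling} with its parameter ``$N$'' set equal to $n$ while keeping the Dirichlet order equal to $n$: for every $a\in\T$ (in particular every $a\in\T_N$), $\frac{1}{2n+1}\sum_{w\in\T_n}|\Dir_n(w/a)|\le\frac{4n+2}{2n+1}+H_n=H_n+2$, so $\max_{a\in\T_N}\sum_{w\in\T_n}|\Dir_n(w/a)|\le(2n+1)(H_n+2)$. Dividing by $\sqrt{(2N+1)(2n+1)}$ yields the bound $\sqrt{\tfrac{2n+1}{2N+1}}(H_n+2)$, completing the displayed chain.

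For the stated consequence, the cleanest route is the Dirichlet interpolation identity: for $\varphi\in\C_n(\Z)$, both sides of $\varphi(z)=\frac{1}{2n+1}\sum_{w\in\T_n}\varphi(w)\,\Dir_n(z/w)$ are Laurent polynomials of degree at most $n$ that agree on the $2n+1$ nodes of $\T_n$ (using $\Dir_n(\zeta)=0$ for every $\zeta\in\T_n\setminus\{1\}$ and $\Dir_n(1)=2n+1$), hence they coincide. Taking absolute values and using the same estimate from Lemma~\ref{lem:oversampling} with $a=z\in\T$, $|\varphi(z)|\le\big(\max_{w\in\T_n}|\varphi(w)|\big)\cdot\frac{1}{2n+1}\sum_{w\in\T_n}|\Dir_n(z/w)|\le(H_n+2)\max_{w\in\T_n}|\varphi(w)|$; taking the supremum over $z\in\T$ finishes the proof. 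Alternatively, the consequence falls out of the main inequality by letting $N\to\infty$ and using that $\bigcup_N\T_N$ is dense in $\T$ together with continuity of $\varphi$, but the interpolation argument avoids the limiting step.

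I do not expect a genuine obstacle here: the only thing requiring care is the computation of the matrix entries $M_{kj}$ and the bookkeeping of the normalization constants $\sqrt{2N+1}$ and $\sqrt{2n+1}$ (and the harmless use of the evenness of $\Dir_n$ to swap $w/a$ with $a/w$). Everything else is a direct appeal to Lemma~\ref{lem:oversampling}, exactly as Lemma~\ref{lem:oversampling-l1} was deduced from it, plus the elementary interpolation identity for the final assertion.
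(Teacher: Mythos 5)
Your proof is correct and follows essentially the same route as the paper: the ratio is the $\ell_\infty\to\ell_\infty$ operator norm of the resampling map $\F_N\circ\F_n^{\,-1}$, whose entries are normalized Dirichlet-kernel values, so the maximal absolute row sum yields the first inequality, and Lemma~\ref{lem:oversampling} applied with summation grid $\T_n$ (and Dirichlet order $n$) yields the second, exactly as in the paper's extreme-point computation. The only difference is that you justify the final consequence explicitly via the Dirichlet interpolation identity on $\T_n$ (the paper leaves this step implicit), and that argument is also correct.
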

\begin{proof}
The convex function~$\| \F_{N}^{\vphantom{\,-1}}[\F_{n}^{\,-1}[\cdot]]\|_\infty$ is maximized at an extreme point of the unit~$\ell_\infty$-ball in~$\C^{2n+1}$, i.e.~on a vector of the form~$(z_0, z_1, ..., z_{2n})$
where~$z_j \in \T$ for~$0 \le j \le 2n$.
As the result,
\[
\bald
\max_{\vphi \in \C_n(\Z) \setminus 0} \frac{\| \F_{N}[\vphi] \|_\infty}{ \| \F_{n}[\vphi] \|_\infty} 
&=  
	\max_{z_0, ..., z_{2n} \in \T} \big\| \F_{N}^{\vphantom{\,-1}} \big[\F_n^{\,-1} \big[ z_0 e^0 + z_1 e^1 + ... + z_{2n} e^{2n} \big] \big] \big\|_\infty \\
&= 
	\frac{1}{\sqrt{(2N+1)(2n+1)}} \max_{z_0, ..., z_{2n} \in \T} \max_{0 \le k \le 2N} \left| \sum_{j = 0}^{2n} z_j\Dir_{n} \left( \frac{\chi_{j,n}}{\chi_{k,N}} \right) \right| \\
&= 
	\frac{1}{\sqrt{(2N+1)(2n+1)}} \max_{0 \le k \le 2N} \sum_{j = 0}^{2n} \left| \Dir_{n} \left( \frac{\chi_{j,n}}{\chi_{k,N}} \right) \right|.
\eald
\]
As previously, we conclude by invoking Lemma~\ref{lem:oversampling}. 
\end{proof}

We are now about to enhance the previous result: it turns out that the worst-case ratio of~$\ell_\infty$-norms reduces to~$O(\log(s))$ if we only allow for~$s$-sparse vectors in the spectral domain.

\begin{lemma}[Oversampling in~$\ell_\infty$ under sparsity]
\label{lem:oversampling-linf-sparse}
For~$s,n,N \in \N$ such that~$N \ge n$, one has
\[
\bald
\max_{w \in \Comp^{2n+1}(s)} \| \F_{N}^{\vphantom{\,-1}}[\F_n^{\,-1}[w]] \|_\infty
\le \sqrt{\frac{2n+1}{2N+1}} (\log(\lceil s/2 \rceil) + 3).
\eald
\]
\end{lemma}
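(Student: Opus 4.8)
The plan is to rerun the argument behind Lemma~\ref{lem:oversampling-linf}, but to use Proposition~\ref{prop:cap-comp} to restrict the optimization to \emph{sparse} extreme points and then to estimate the sum of only the~$s$ largest Dirichlet‑kernel magnitudes over the grid. To begin, note that~$w \mapsto \|\F_{N}^{\vphantom{\,-1}}[\F_{n}^{\,-1}[w]]\|_\infty$ is convex, being a norm composed with a linear map; hence its maximum over the compact polytope~$\Comp^{2n+1}(s)$ is attained at an extreme point, and by Proposition~\ref{prop:cap-comp} every such point has the form~$w = \sum_{j \in \cJ} z_j e^{j}$ with~$\cJ \subseteq \{0,\dots,2n\}$,~$|\cJ| \le s$, and~$z_j \in \T$, where~$e^{0}, \dots, e^{2n}$ is the canonical basis of~$\R^{2n+1}$. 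Exactly as in the proof of Lemma~\ref{lem:oversampling-linf}, the~$z$‑transform of~$\F_{n}^{\,-1}[e^{j}]$ equals~$(2n+1)^{-1/2}\Dir_{n}(\chi_{j,n}/z)$, so the triangle inequality gives
\[
\big\|\F_{N}^{\vphantom{\,-1}}[\F_{n}^{\,-1}[w]]\big\|_\infty
\;\le\; \frac{1}{\sqrt{(2N+1)(2n+1)}}\, \max_{0 \le k \le 2N}\, \sum_{j \in \cJ} \Big| \Dir_{n}\!\Big(\frac{\chi_{j,n}}{\chi_{k,N}}\Big)\Big|,
\]
and it remains to bound the inner sum, uniformly in~$k$, by~$(2n+1)\big(\log\lceil s/2\rceil + 3\big)$.

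Fix~$k$. The~$2n+1$ points~$\chi_{j,n}/\chi_{k,N}$, $0 \le j \le 2n$, are a rotated copy of~$\T_{n}$, i.e.\ equispaced on~$\T$ with angular gap~$\tfrac{2\pi}{2n+1}$; consequently any arc~$[-t,t]$ contains at most~$\lfloor (2n+1)t/\pi\rfloor + 1$ of them. Writing~$v_1 \ge v_2 \ge \cdots$ for the~$2n+1$ magnitudes~$|\Dir_{n}(\chi_{j,n}/\chi_{k,N})|$ sorted in decreasing order, and using that~$|\dir_{n}|$ is dominated by the decreasing envelope~$\min\{2n+1,\pi/|\omega|\}$ on~$[-\pi,\pi]$ (the estimate~$|\dir_n(\omega)|\le 1/|\sin(\omega/2)|\le\pi/|\omega|$ already recorded in the proof of Lemma~\ref{lem:oversampling}), a short comparison argument gives~$v_1 \le 2n+1$ and~$v_l \le (2n+1)/(l-1)$ for~$l \ge 2$. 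Since summation over a~$|\cJ|$‑subset is dominated by the~$|\cJ|$ largest terms,
\[
\sum_{j \in \cJ} \Big| \Dir_{n}\!\Big(\frac{\chi_{j,n}}{\chi_{k,N}}\Big)\Big| \;\le\; \sum_{l=1}^{|\cJ|} v_l \;\le\; (2n+1) + \sum_{l=2}^{|\cJ|}\frac{2n+1}{l-1} \;=\; (2n+1)\big(1 + H_{|\cJ|-1}\big) \;\le\; (2n+1)\big(1 + H_{s-1}\big),
\]
with~$H_m := \sum_{i=1}^{m} i^{-1}$ and~$H_0 := 0$.

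It then remains to tidy up the constant. For~$s \ge 2$ one has~$H_{s-1} \le 1 + \log(s-1) \le 1 + \log s \le 1 + \log 2 + \log\lceil s/2\rceil$ (using~$s \le 2\lceil s/2\rceil$), hence~$1 + H_{s-1} \le 2 + \log 2 + \log\lceil s/2\rceil < \log\lceil s/2\rceil + 3$; for~$s = 1$ the left side equals~$1 \le 3 = \log\lceil 1/2\rceil + 3$. Substituting this back into the two displays yields the claimed bound. The only genuinely delicate point is the equidistribution estimate controlling the sorted magnitudes~$v_l$ together with the bookkeeping of its constant; this is exactly what forces~$\lceil s/2\rceil$ (rather than~$s$) inside the logarithm, and it is why the proof recovers the sharper~$\log$‑dependence that the crude Lebesgue‑constant bound of Lemma~\ref{lem:oversampling-linf} misses under sparsity. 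Everything else mirrors the proofs of Lemmas~\ref{lem:oversampling}--\ref{lem:oversampling-linf} and is routine.
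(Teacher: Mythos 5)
Your proposal is correct and follows essentially the same route as the paper: reduce to the sparse, unimodular-phase extreme points via Proposition~\ref{prop:cap-comp}, factor out the phases by the triangle inequality, and bound the resulting sum of Dirichlet-kernel magnitudes over an $s$-subset of the equispaced grid using the envelope $|\dir_n(\omega)|\le\pi/|\omega|$ together with the grid spacing $\tfrac{2\pi}{2n+1}$. The only difference is bookkeeping in the last step — you bound the sorted magnitudes to get $(2n+1)(1+H_{s-1})$, while the paper pairs points on either side of the peak to get $(2n+1)(H_{\lceil s/2\rceil}+2)$ — and both quantities are indeed at most $(2n+1)(\log\lceil s/2\rceil+3)$, so your constant check goes through.
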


\begin{proof}

We can assume that~$s \le 2n$, as otherwise the claim reduces to Lemma~\ref{lem:oversampling-linf}. 
The convex function~$\| \F_{N}^{\vphantom{\,-1}}[\F_{n}^{\,-1}[\cdot]]\|_\infty$ is maximized at an extreme point of~$\Comp^{2n+1}(s)$. By Proposition~\ref{prop:cap-comp}, these are of the form~$z_1 e^{j_1} + ... z_s e^{j_s}$ for some~$z_1, ..., z_s \in \T$ and~$\{j_1, ...,  j_s\} \subset \{0, 1, ..., 2n\}$. 
Thus,
\begin{align}
&\max_{w \in \Comp^{2n+1}(s)} \| \F_{N}^{\vphantom{\,-1}}[\F_n^{\,-1}[w]] \|_\infty
=  
	\max_{
	\scriptsize
	\begin{aligned}
		\{j_1, ...,  j_s\} &\subset \{0, 1, ..., 2n\},\notag\\
		z_1, ..., z_{s}    &\in \T
	\end{aligned}
	} \; \big\| \F_{N}^{\vphantom{\,-1}} \big[\F_n^{\,-1} \big[ z_1 e^{j_1} + ... + z_{s} e^{j_s} \big] \big] \big\|_\infty \\
&= 
	\frac{1}{\sqrt{(2N+1)(2n+1)}} 
	\max_{
	\scriptsize
	\begin{aligned}
		\{j_1, ...,  j_s\} &\subset \{0, 1, ..., 2n\},\notag\\
		z_1, ..., z_{s}    &\in \T
	\end{aligned}
	} \; \max_{0 \le k \le 2N} \left| z_1 \Dir_{n} \hspace{-0.1cm}\left( \frac{\chi_{j_1,n}}{\chi_{k,N}} \right) + ... + z_s \Dir_{n} \hspace{-0.1cm}\left( \frac{\chi_{j_s,n}}{\chi_{k,N}} \right)\right| \\
&= 
	\frac{1}{\sqrt{(2N+1)(2n+1)}} 
	\max_{\{j_1, ...,  j_s\} \subset \{0, 1, ..., 2n\}} \,  \max_{0 \le k \le 2N} \left( \left| \Dir_{n} \hspace{-0.1cm}\left( \frac{\chi_{j_1,n}}{\chi_{k,N}} \right) \right| + ... + \left| \Dir_{n} \hspace{-0.1cm}\left( \frac{\chi_{j_s,n}}{\chi_{k,N}} \right) \right| \right) \notag\\
&\le 
\hspace{-0.05cm}\sqrt{\frac{2n+1}{2N+1}} (H_{\lceil s/2 \rceil} + 2).
\end{align}
In the end, we proceeded as in the proof of Lemma~\ref{lem:oversampling} using the bound~$|\dir_n(\omega)| \le {\pi}{|\omega|^{-1}}$.
\end{proof}

\section{Technical results concerning shift-invariant subspaces of~$\C(\Z)$}
\label{apx:shift-inv}

\subsection{Characterization of finite-dimensional shift-invariant subspaces of~$\C(\Z)$}
\label{sec:SIS-characterization}

As shown in~\cite{harchaoui2019adaptive},~$s$-dimensional shift-invariant subspaces of~$\C(\Z)$ can be characterized in terms of the corresponding characteristic polynomial.\footnote{Characterization of the invariant subspaces for the restriction of~$\Delta$ to~$\ell^2$ is a classical topic~\cite{beurling1949two,halmos1961shifts,nikol'skii1967invariant}. Yet, these classical results seem to be limited to Hilbert spaces, and therefore not allowing to recover Proposition~\ref{prop:SIS-characterization}.}
To state the corresponding result, we start with a brief reminder.
The {\em lag} operator~$\Delta: \C(\Z) \to \C(\Z)$ is defined by~$(\Delta x)_t = x_{t-1}$.
A linear subspace~$X$ of~$\C(\Z)$ is called {\em shift-invariant} if it is an invariant subspace of~$\Delta$, that is~$\Delta X \subseteq X$.
An easy exercise is to verify that~$\Delta$ is bijective on any shift-invariant subspace, thus~$\Delta X = X$.
\begin{proposition}[{\cite[Proposition~5]{harchaoui2019adaptive}}]
\label{prop:SIS-characterization} 
Any shift-invariant subspace of $\C(\Z)$ of dimension~$s$ is the set of solutions for some homogeneous linear difference equation~$\sP(\Delta) x \equiv 0$ with a characteristic polynomial~$\sP(z) = \sP_0 + \sP_1 z + ... + \sP_s z^s$ of degree~$s$,  unique if normalized by~$\sP(0)=1$.
\end{proposition}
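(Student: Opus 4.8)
The plan is to reduce everything to finite-dimensional linear algebra by studying the restriction of the lag operator to $X$, and then to match its intrinsic invariants with the characteristic polynomial of a difference equation. The two ingredients are the Cayley--Hamilton theorem (for existence) and a dimension count for the solution set of an order-$s$ recurrence on two-sided sequences (to upgrade an inclusion to an equality, and to force uniqueness).

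First I would note that, since $X$ is shift-invariant and $\Delta$ is injective on $\C(\Z)$, the restriction $A := \Delta|_X$ is a well-defined injective linear map of the $s$-dimensional space $X$ into itself, hence a linear automorphism (so in fact $\Delta X = X$). Let $\chi(z)$ be the characteristic polynomial of $A$: it is monic of degree $s$, and since $A$ is invertible $0$ is not an eigenvalue, so $\chi(0)\neq 0$. Put $\sP(z) := \chi(z)/\chi(0)$, which has degree $s$ and satisfies $\sP(0)=1$. Because $\Delta(X)\subseteq X$, the powers of $\Delta$ restrict to the powers of $A$, whence $\sP(\Delta)x = \chi(A)x/\chi(0)$ for all $x\in X$; by Cayley--Hamilton $\chi(A)=0$, so $\sP(\Delta)x\equiv 0$ on $X$. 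Thus $X\subseteq Y$, where $Y := \{x\in\C(\Z): \sP(\Delta)x\equiv 0\}$.

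Next I would show $\dim Y = s$, which combined with $X\subseteq Y$ and $\dim X = s$ yields $X=Y$. Write $\sP(z)=\sP_0+\sP_1 z+\cdots+\sP_s z^s$; then $\sP_0=\sP(0)=1\neq 0$ and $\sP_s = 1/\chi(0)\neq 0$ since $\deg\chi=s$. The condition $\sP(\Delta)x\equiv 0$ reads $\sum_{k=0}^{s}\sP_k x_{t-k}=0$ for all $t\in\Z$; since $\sP_0\neq 0$ this expresses $x_t$ in terms of $x_{t-1},\dots,x_{t-s}$, and since $\sP_s\neq 0$ it also expresses $x_{t-s}$ in terms of $x_{t-s+1},\dots,x_{t}$. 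Hence the map $Y\to\C^{s}$, $x\mapsto(x_0,\dots,x_{s-1})$, is a bijection: injectivity because the $s$ consecutive initial values propagate uniquely in both time directions, surjectivity because any prescribed $s$-tuple propagates to a (necessarily two-sided) solution. So $\dim Y=s$ and $X=Y$, which establishes existence of an order-$s$ equation with solution set $X$, normalized by $\sP(0)=1$.

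For uniqueness, suppose $\tilde\sP$ is another degree-$s$ polynomial with $\tilde\sP(0)=1$ whose solution set is $X$. Then $\tilde\sP(\Delta)$ annihilates $X$, so the minimal polynomial $\mu$ of $A$ divides $\tilde\sP$; invertibility of $A$ gives $\mu(0)\neq 0$, so the dimension count of the previous paragraph applies with $\mu$ in place of $\sP$ and shows that $\{x:\mu(\Delta)x\equiv 0\}$ has dimension exactly $\deg\mu$. Since this set contains $X$, we get $\deg\mu\ge s$; combined with $\mu\mid\tilde\sP$ and $\deg\tilde\sP=s$ this forces $\deg\mu=s$ and $\tilde\sP=\mu/\mu(0)$, and the same argument gives $\sP=\mu/\mu(0)$, so $\tilde\sP=\sP$. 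The step that needs the most care is the dimension computation for $Y$: it genuinely relies on $\sP$ having \emph{both} a nonzero constant term \emph{and} exact degree $s$, which is precisely what makes the two-sided recurrence solvable in both directions of time; the rest is routine.
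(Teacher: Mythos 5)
Your proof is correct. Note that the paper itself does not prove this proposition --- it imports it verbatim from \cite[Prop.~5]{harchaoui2019adaptive} (and cites \cite[Prop.~4.1.2]{ostrovskii2018adaptive} for the same fact in the introduction) --- so there is no in-paper argument to compare against; your write-up is a valid self-contained substitute. The three ingredients all check out: (i) since $\Delta$ is injective on $\C(\Z)$ and $X$ is finite-dimensional and shift-invariant, $A=\Delta|_X$ is an automorphism, so its characteristic polynomial $\chi$ satisfies $\chi(0)\ne 0$ and $\sP=\chi/\chi(0)$ has exact degree $s$, nonzero constant term, and annihilates $X$ by Cayley--Hamilton; (ii) the dimension count $\dim\{x:\sP(\Delta)x\equiv 0\}=s$ is exactly where the two-sidedness of $\C(\Z)$ enters, and you correctly isolate that it needs \emph{both} $\sP_0\ne 0$ (forward propagation) and $\sP_s\ne 0$ (backward propagation), which is also why the invertibility of $A$ is not a throwaway remark; (iii) the uniqueness argument via the minimal polynomial $\mu$ of $A$ (with $\mu(0)\ne0$, $\deg\mu\ge s$ from the same dimension count, hence $\mu=\chi$ and $\tilde\sP=\mu/\mu(0)=\sP$) is sound. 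This is the natural linear-algebraic route one would expect for the cited result, and it matches the paper's conventions (eigenvalues of $\Delta|_X$ being the reciprocals $w_k^{-1}$ of the parameters in $X(w_1,\dots,w_s)$).
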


Recall that for~$w_1, ..., w_s \in \C$, we let~$\X(w_1,...,w_s)$ be the solution set of the equation~\eqref{eq:intro-ODE} whose characteristic polynomial is 
$
\sP(z) = \prod_{k \in [s]} \left(1-{w_k}{z}\right),
$
i.e.~has~$w_1^{-1}, ..., w_s^{-1}$ as its roots. By Proposition~\ref{prop:SIS-characterization}, any SIS of dimension~$s$ writes as~$X(w_1, ..., w_s)$ for a unique, up to a permutation, selection of~$(w_1, ..., w_s) \in \C^s$.
In the case of distinct roots,~$X(w_1, ..., w_s)$ is the span of~$s$ complex exponentials~$w_1^t,..., w_s^t$, i.e.~comprises signals of the form
$x_t = \sum_{k = 1}^s c_k^{\vphantom t} w_k^t$ with~$c_k \in \C.$
Generally, we get exponential polynomials: if the root~$w_{k}^{-1}$ of~$\sP$ has multiplicity~$m_k$, then~$X(w_1, ..., w_s)$ comprises signals of the form~$x_t = \sum_{k \in [s]} q_k(t)^{\vphantom t} w_{k}^t$, where~$\deg(q_k) = m_k-1$ and~$\sum_{k \in [s]} m_k = s$. 

\subsection{Minimal-norm one-sided reproducing filter for the polynomial subspace}
\label{apx:one-sided}

\begin{proposition}
\label{prop:causal-bound}
Let~$X$ be the space of complex polynomials of degree at most~$s-1$. 
Any sequence~$\{\phi_+^{*,m} \in \C_m^{+}(\Z)\}_{m \in \N}$ of one-sided reproducing for~$X$ filters of minimal~$\ell_2$-norm satisfies
\begin{equation}
\label{eq:causal-bound}
\lim_{m \to \infty} m\|\phi_+^{*,m}\|_{2}^2 =  s^2. 
\end{equation}
\end{proposition}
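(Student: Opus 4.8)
The plan is to convert the reproducing condition into a finite moment system, solve the associated least-norm problem in closed form, and then let $m\to\infty$, at which point the relevant Gram matrix degenerates into a Hilbert matrix whose inverse is classically known. Here $X$ is the span of the monomial sequences $p_j(t)=t^{\,j}$, $j=0,\dots,s-1$, and it is shift-invariant. For $\vphi\in\C_m^+(\Z)$ one has $(\vphi\ast x)_t=\sum_{k=0}^{m}\vphi_k x_{t-k}$, and since $X=\operatorname{span}\{p_0,\dots,p_{s-1}\}$ is shift-invariant, $\vphi$ reproduces $X$ if and only if $(\vphi\ast p_j)_0=p_j(0)$ for $j=0,\dots,s-1$ (the reverse implication follows upon expanding $(t-k)^j$ by the binomial theorem). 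This amounts to
\[
\sum_{k=0}^{m} k^{\,r}\,\vphi_k=\ones\{r=0\},\qquad r=0,1,\dots,s-1 ,
\]
i.e.\ $A\vphi=b$ with $A\in\R^{s\times(m+1)}$, $A_{rk}=k^{\,r}$, and $b=(1,0,\dots,0)^\top\in\R^{s}$. For $m\ge s-1$ the matrix $A$ contains a square Vandermonde block, hence has full row rank $s$, so a reproducing $\vphi\in\C_m^+(\Z)$ exists, and the minimal-$\|\cdot\|_2$-norm one is unique and given by $\phi_+^{*,m}=A^\top(AA^\top)^{-1}b$; consequently
\[
\|\phi_+^{*,m}\|_2^2=b^\top(AA^\top)^{-1}b=\big[(G_m)^{-1}\big]_{00},\qquad (G_m)_{ij}:=\sum_{k=0}^{m}k^{\,i+j}\quad(0\le i,j\le s-1).
\]

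Next I would read off the asymptotics. Since $\sum_{k=0}^{m}k^{\,p}=\tfrac{m^{p+1}}{p+1}\bigl(1+o(1)\bigr)$ for every fixed $p\ge0$, letting $D_m:=\diag(1,m,\dots,m^{s-1})$ one checks entrywise that $\tfrac1m\,D_m^{-1}G_m D_m^{-1}\to H$ as $m\to\infty$, where $H_{ij}=\tfrac{1}{i+j+1}$ for $0\le i,j\le s-1$ is the $s\times s$ Hilbert matrix; entrywise convergence is convergence in norm here since the matrices have fixed size. The matrix $H$ is the Gram matrix of $\{1,u,\dots,u^{s-1}\}$ in $L^2[0,1]$, so it is positive definite, hence invertible; by continuity of matrix inversion, $m\,D_m G_m^{-1}D_m\to H^{-1}$, and since $(D_m)_{00}=1$, reading off the $(0,0)$ entry gives $m\,[(G_m)^{-1}]_{00}\to[H^{-1}]_{00}$. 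Combined with the displayed identity for $\|\phi_+^{*,m}\|_2^2$, this yields $m\,\|\phi_+^{*,m}\|_2^2\to[H^{-1}]_{00}$, so it remains to check that $[H^{-1}]_{00}=s^2$.

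For this last point I would use the standard fact that the $(0,0)$ entry of the inverse of the Gram matrix of a basis equals the reciprocal of the squared distance from the first basis element to the span of the remaining ones; here this reads
\[
[H^{-1}]_{00}=\Bigl(\dist_{L^2[0,1]}\bigl(1,\ \operatorname{span}(u,\dots,u^{s-1})\bigr)\Bigr)^{-2}=\Bigl(\min\bigl\{\textstyle\int_0^1 p(u)^2\,du:\ \deg p\le s-1,\ p(0)=1\bigr\}\Bigr)^{-1}.
\]
Expanding a polynomial $p$ of degree $\le s-1$ in the orthonormal shifted Legendre basis $\widehat L_j(u)=\sqrt{2j+1}\,L_j(2u-1)$ of $L^2[0,1]$, say $p=\sum_{j=0}^{s-1}a_j\widehat L_j$, one has $\int_0^1 p^2=\sum_j a_j^2$, while $p(0)=\sum_j a_j\widehat L_j(0)=\sum_{j=0}^{s-1}a_j\sqrt{2j+1}\,(-1)^j$ since $L_j(-1)=(-1)^j$; minimizing $\sum_j a_j^2$ subject to $\sum_j a_j\sqrt{2j+1}\,(-1)^j=1$, Cauchy--Schwarz gives the value $\bigl(\sum_{j=0}^{s-1}(2j+1)\bigr)^{-1}=s^{-2}$. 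Hence $[H^{-1}]_{00}=s^2$ and $\lim_{m\to\infty}m\,\|\phi_+^{*,m}\|_2^2=s^2$. (Alternatively, one may quote the classical closed form for the inverse Hilbert matrix, whose first diagonal entry is $s^2$.) I expect the only genuinely delicate step to be this last computation — pinning down the normalization of the shifted Legendre polynomials (equivalently, the indexing in the inverse-Hilbert formula); the remainder is routine linear algebra together with the elementary power-sum asymptotics $\sum_{k\le m}k^{\,p}\sim m^{p+1}/(p+1)$.
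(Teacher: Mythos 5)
Your proof is correct and follows essentially the same route as the paper: reduce the reproducing property to a moment (Vandermonde) system, take the least-norm solution, and identify the limit of the rescaled Gram matrix as the $s\times s$ Hilbert matrix, whose inverse has $(0,0)$ entry $s^2$. The only differences are cosmetic simplifications: you keep $\vphi_0$ inside the linear system (its extra column only perturbs the Gram matrix by a term that vanishes after rescaling), which makes the paper's separate reduction from $\C_m^+(\Z)$ to the strictly causal case and the final squeeze-theorem step unnecessary, and you derive $[H^{-1}]_{00}=s^2$ from the $L^2[0,1]$ extremal problem with shifted Legendre polynomials rather than citing the classical inverse-Hilbert-matrix formula.
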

\begin{proof}
%
Fixing~$m \ge s$, we first focus on~$\psi \in \C_m^+(\Z)$ such that~$\psi(0) = 0$, denoted as~$\psi \in \C_m^{++}(\Z)$, 
and associate any such~$\psi$ with a vector~$(\psi_1, ..., \psi_m)$. We return to the general case in the end.

\noindent
\proofstep{1}. 
Observe that~$X$ is reproduced by~$\psi \in \C_m^{++}(\Z)$ if and only if~$\psi$ satisfies the equations
\begin{equation}
\label{eq:poly-linsys}
\begin{bmatrix}
1 & 1 & 1 & \dots & 1 \\
0 & \frac{1}{m} & \frac{2}{m} & \dots & 1 \\
\vdots & \vdots & \vdots & \ddots & \vdots \\
0 & \left(\frac{1}{m}\right)^{s-1} & \left(\frac{2}{m} \right)^{s-1} & \dots & 1
\end{bmatrix}
\begin{bmatrix}
-1\\
\psi
\end{bmatrix}
= 0.
\end{equation}
Indeed, each row in~\eqref{eq:poly-linsys} is the slice~$(x_0, x_1, ..., x_m)$ of~$x \in X$, so any reproducing~$\psi \in \C_m^{++}(\Z)$ satisfies~\eqref{eq:poly-linsys}. Conversely, the rows in~\eqref{eq:poly-linsys} slice a monomial basis of~$X$, so any solution to~\eqref{eq:poly-linsys} reproduces an arbitrary~$x \in X$ at~$t = 0$, but then at any~$t \in \Z$ as well, since~$X$ is shift-invariant. 

\noindent\proofstep{2}. 
Letting~$e_0$ be the first canonical basis vector of~$\R^s$,  we can rewrite~\eqref{eq:poly-linsys} as
\begin{equation}
\label{eq:poly-linsys-resolved}
\begin{bmatrix}
1 & 1 & \dots & 1 \\
\frac{1}{m} & \frac{2}{m} & \dots & 1 \\
\vdots & \vdots & \ddots & \vdots \\
\left(\frac{1}{m}\right)^{s-1} & \left(\frac{2}{m}\right)^{s-1} & \dots & 1
\end{bmatrix}
\psi 
= 
e_0,
\end{equation}
that is~$V^\top \psi = e_0$ where~$V = {V}_{s}(\frac{1}{m},\frac{2}{m},...,1) \in \R^{m \times s}$ is a full-rank Vandermonde matrix for the interpolation grid~$\{\frac{1}{m}, \frac{2}{m}, ..., 1 \}$.
The minimal-norm solution~$\psi^{*,m}_{++}$ of~\eqref{eq:poly-linsys-resolved} is unique and reads
\[
\psi^{*,m}_{+,+} = V (V^\top V)^{-1} e_0,
\]
so that~$m \|\psi^{*,m}_{+,+}\|_2^2 = [H_{s,m}^{-1}]_{0,0}$ where~$H_{s,m} := \frac{1}{m} V^\top V$. 
But for the entries of~$H_{s,m}$ we have
\[
\lim_{m \to \infty} [H_{s,m}]^{\vphantom{m}}_{j,k} 
= \lim_{m \to \infty} \frac{1}{m+1} \sum_{\tau = 1}^m \left(\frac{\tau}{m+1}\right)^{j+k} 
= \int_{0}^{1} u^{j+k} du
= \frac{1}{j+k+1}
= [H_s]_{j,k}
\]
where~$H_{s}$ is the~$s \times s$ Hilbert matrix (recall that~$j,k \in \{0, ..., s-1\}$ by our indexing convention).
Using the identity~$[H_{s}^{-1}]_{0,0} = s^2$, e.g.~\cite{choi1983tricks}, we arrive at the ``strictly one-sided" version of~\eqref{eq:causal-bound}:
\begin{equation}
\label{eq:strict-causal-bound}
\lim_{m \to \infty} m\|\psi^{*,m}_{++}\|_{2}^2 =  s^2.
\end{equation}
\noindent\proofstep{3}. 
Finally, observe that any reproducing for~$X$ filter~$\phi = (\phi_0, \phi_1, ..., \phi_m) \in \C_m^+(\Z)$ with~$\phi_0 \ne 1$ generates a reproducing for~$X$ filter~$\psi = \frac{1}{1-\phi_0} (0, \phi_1, ..., \phi_m) \in \C_m^{++}(\Z)$ whose squared norm is
\[
\|\psi\|_2^2 = \frac{\| \phi \|_2^2 - |\phi_0|^2}{|1-\phi_0|^2}.
\]
Choosing the minimal-norm~$\phi = \phi^{*,m}_{+}$ in the right-hand side, and comparing with~\eqref{eq:strict-causal-bound}, we get
\[
s^2 
= \lim_{m \to \infty} m\big\|\psi^{*,m}_{++}\big\|_2^2 
\le \liminf_{m \to \infty} m \left(\frac{\big\|\phi_+^{*,m}\big\|_2^2 - \big|[\phi_+^{*,m}]_0\big|^2}{\left|1-[\phi_+^{*,m}]_0\right|^2} \right) 
= \liminf_{m \to \infty}  m \left\|\phi_+^{*,m} \right\|_2^2,
\]
where~$\left|[\phi_+^{*,m}]_0\right| \to 0$ by~\eqref{eq:strict-causal-bound}.
Since
$
\|\psi^{*,m}_{++}\|_2 \ge \|\phi_+^{*,m}\|_2,
$
the squeeze theorem implies~\eqref{eq:causal-bound}.
\end{proof}

\section*{Acknowledgments}
The author is indebted to Profs.~Anatoli Juditsky and Arkadi Nemirovski for insightful discussions, encouragement to write this paper, \mbox{and the privilege of learning from them.}
He thanks Artem Zvavitch and Fedor Nazarov for organizing his visit at Kent State University in Spring 2026.

\bibliography{references}
\bibliographystyle{unsrt}

\end{document}